\newtheorem{theorem}{Theorem}
\numberwithin{theorem}{section}
\newtheorem{lemma}{Lemma}
\numberwithin{lemma}{subsubsection}
\newtheorem{proposition}{Proposition}
\numberwithin{proposition}{subsubsection}
\newtheorem{corollary}{Corollary}
\numberwithin{corollary}{subsubsection}
\theoremstyle{definition}
\newtheorem{example}{Example}
\newtheorem{note}{Note}
\newtheorem*{question}{Question}
\title{Modular fusion categories with few twists}
\author{Andrew Schopieray}
\date{}
\begin{document}

\maketitle

\begin{abstract}
We classify modular fusion categories up to braided equivalence with less than four distinct twists of simple objects by observing that under this assumption, for each positive integer $N$, there are finitely many modular fusion categories of Frobenius-Schur exponent $N$ up to braided equivalence whose twists are a proper subset of the $N^\mathrm{th}$ roots of unity.
\end{abstract}

\section{Introduction}

A modular fusion (tensor) category \cite[Definition 8.13.4]{tcat} determines a $(2+1)$-dimensional topological quantum field theory, i.e.\ a monoidal functor from the category
of 3-dimensional cobordisms to the category of finite dimensional vector spaces.  This  naturally provides a projective representation of the mapping class group of closed surfaces.  The modular data of the modular fusion category is the data of this projective representation associated to the torus, whose mapping class group is the modular group $\mathrm{SL}(2,\mathbb{Z})$; this data is central to the role of modular fusion categories in mathematical physics and
low-dimensional topology.  Though not determining the modular fusion category uniquely \cite{data}, the modular data determines many characteristics of the modular fusion category such as rank, global dimension, fusion rules, etc.\ and is often reported as a pair of matrices of cyclotomic integers $S,T$ with $T$ a diagonal matrix of roots of unity $\theta_X$ for all simple objects $X$: the \emph{twists} of the modular fusion category.  It was shown that once $S,T$ are rescaled appropriately to a normalized pair $s,t$, the corresponding $\mathrm{SL}(2,\mathbb{Z})$-representation can be factored through a congruence subgroup representation.  That is to say one has an $\mathrm{SL}(2,\mathbb{Z}/n\mathbb{Z})$-representation where $n\in\mathbb{Z}_{\geq1}$ is the order of the matrix $t$.  Irreducible $\mathrm{SL}(2,\mathbb{Z}/n\mathbb{Z})$-representations for any $n\in\mathbb{Z}_{\geq1}$ have been completely described \cite{MR444787,MR444788}, so computational methods can be utilized towards classification efforts by rank, i.e.\ the number of isomorphism classes of simple objects, leveraging the copious restrictions on the representation coming from the underlying categorical structure.  For a modular fusion category $\mathcal{C}$, we will universally use $N\in\mathbb{Z}_{\geq1}$ to denote the order of the $T$-matrix of $\mathcal{C}$ which is also known as the Frobenius-Schur exponent $\mathrm{FSexp}(\mathcal{C})$ \cite{MR2313527}.  Classifying modular fusion categories by $N$ is more complex than a study by rank in the sense that there exist infinitely many inequivalent modular fusion categories $\mathcal{C}$ with any given Frobenius-Schur exponent greater than $1$, e.g.\ untwisted doubles of finite groups of exponent $N$.  Nonetheless, we show there are finitely many modular fusion categories with less than four distinct twists modulo the infinite familes of modular fusion subcategories of twisted doubles of finite groups of exponent $2$ and $3$.  These finitely many sporadic categories are explicitly listed up to braided equivalence in Figures \ref{fig:two} and \ref{fig:three}.

\par One striking observation from this novel classification is that the sporadic, or exceptional, categories with two and three twists (those with $N\neq2,3$, respectively) can be characterized in an unexpected way.  These are precisely the examples for which the sets of twists are \emph{incomplete}.  In other words, when restricted to less than four twists, for each $N\in\mathbb{Z}_{\geq1}$, there are finitely many modular fusion categories $\mathcal{C}$ up to braided equivalence with $\mathrm{FSexp}(\mathcal{C})=N$ whose twists are a proper subset of the $N^\mathrm{th}$ roots of unity.  To illustrate how rare modular fusion categories with incomplete twists are in this setting, consider all modular fusion categories $\mathcal{C}$ with $\mathrm{FSexp}(\mathcal{C})=N=4$.  Such a category is nilpotent \cite{MR2383894}, and therefore must be a fusion subcategory of a twisted double $\mathcal{Z}(\mathrm{Vec}_G^\omega)$ of a finite group $G$ of exponent $2$ or $4$ for some $\omega\in H^3(G,\mathbb{C}^\times)$ \cite[Theorem 1.2]{drinfeld2007grouptheoretical}.  Despite the inordinate number and variety of such modular fusion categories, our results prove there are exactly seven up to braided equivalence whose twists form an incomplete set: the two metric groups of rank $2$ (Theorem \ref{th:twotwists}), four metric groups of ranks $4$ and $16$ (Theorem \ref{th:thre} and Example \ref{groupexamples}), and exactly one twisted double of the elementary abelian $2$-group $C_2^3$.  The latter is the largest of the incomplete examples with less than four twists.   It has rank $22$ and dimension $64$, and its modular data is given in Figure \ref{fig:twoo} below.

\begin{figure}[H]
\centering
\begin{equation*}\arraycolsep=1pt\def\arraystretch{1}
S=\left[\begin{array}{rrrrrrrrrrrrrrrrrrrrrr}
1 & 1 & 1 & 1 & 1 & 1 & 1 & 1 & 2 & 2 & 2 & 2 & 2 & 2 & 2 & 2 & 2 & 2 & 2 & 2 & 2 & 2 \\
1 & 1 & 1 & 1 & 1 & 1 & 1 & 1 & -2 & -2 & 2 & 2 & 2 & 2 & -2 & -2 & -2 & -2 & 2 & 2 & -2 & -2 \\
1 & 1 & 1 & 1 & 1 & 1 & 1 & 1 & 2 & 2 & -2 & -2 & 2 & 2 & -2 & -2 & 2 & 2 & -2 & -2 & -2 & -2 \\
1 & 1 & 1 & 1 & 1 & 1 & 1 & 1 & -2 & -2 & -2 & -2 & 2 & 2 & 2 & 2 & -2 & -2 & -2 & -2 & 2 & 2 \\
1 & 1 & 1 & 1 & 1 & 1 & 1 & 1 & 2 & 2 & 2 & 2 & -2 & -2 & 2 & 2 & -2 & -2 & -2 & -2 & -2 & -2 \\
1 & 1 & 1 & 1 & 1 & 1 & 1 & 1 & -2 & -2 & 2 & 2 & -2 & -2 & -2 & -2 & 2 & 2 & -2 & -2 & 2 & 2 \\
1 & 1 & 1 & 1 & 1 & 1 & 1 & 1 & 2 & 2 & -2 & -2 & -2 & -2 & -2 & -2 & -2 & -2 & 2 & 2 & 2 & 2 \\
1 & 1 & 1 & 1 & 1 & 1 & 1 & 1 & -2 & -2 & -2 & -2 & -2 & -2 & 2 & 2 & 2 & 2 & 2 & 2 & -2 & -2 \\
2 & -2 & 2 & -2 & 2 & -2 & 2 & -2 & -4 & 4 & 0 & 0 & 0 & 0 & 0 & 0 & 0 & 0 & 0 & 0 & 0 & 0 \\
2 & -2 & 2 & -2 & 2 & -2 & 2 & -2 & 4 & -4 & 0 & 0 & 0 & 0 & 0 & 0 & 0 & 0 & 0 & 0 & 0 & 0 \\
2 & 2 & -2 & -2 & 2 & 2 & -2 & -2 & 0 & 0 & -4 & 4 & 0 & 0 & 0 & 0 & 0 & 0 & 0 & 0 & 0 & 0 \\
2 & 2 & -2 & -2 & 2 & 2 & -2 & -2 & 0 & 0 & 4 & -4 & 0 & 0 & 0 & 0 & 0 & 0 & 0 & 0 & 0 & 0 \\
2 & 2 & 2 & 2 & -2 & -2 & -2 & -2 & 0 & 0 & 0 & 0 & -4 & 4 & 0 & 0 & 0 & 0 & 0 & 0 & 0 & 0 \\
2 & 2 & 2 & 2 & -2 & -2 & -2 & -2 & 0 & 0 & 0 & 0 & 4 & -4 & 0 & 0 & 0 & 0 & 0 & 0 & 0 & 0 \\
2 & -2 & -2 & 2 & 2 & -2 & -2 & 2 & 0 & 0 & 0 & 0 & 0 & 0 & -4 & 4 & 0 & 0 & 0 & 0 & 0 & 0 \\
2 & -2 & -2 & 2 & 2 & -2 & -2 & 2 & 0 & 0 & 0 & 0 & 0 & 0 & 4 & -4 & 0 & 0 & 0 & 0 & 0 & 0 \\
2 & -2 & 2 & -2 & -2 & 2 & -2 & 2 & 0 & 0 & 0 & 0 & 0 & 0 & 0 & 0 & -4 & 4 & 0 & 0 & 0 & 0 \\
2 & -2 & 2 & -2 & -2 & 2 & -2 & 2 & 0 & 0 & 0 & 0 & 0 & 0 & 0 & 0 & 4 & -4 & 0 & 0 & 0 & 0 \\
2 & 2 & -2 & -2 & -2 & -2 & 2 & 2 & 0 & 0 & 0 & 0 & 0 & 0 & 0 & 0 & 0 & 0 & -4 & 4 & 0 & 0 \\
2 & 2 & -2 & -2 & -2 & -2 & 2 & 2 & 0 & 0 & 0 & 0 & 0 & 0 & 0 & 0 & 0 & 0 & 4 & -4 & 0 & 0 \\
2 & -2 & -2 & 2 & -2 & 2 & 2 & -2 & 0 & 0 & 0 & 0 & 0 & 0 & 0 & 0 & 0 & 0 & 0 & 0 & -4 & 4 \\
2 & -2 & -2 & 2 & -2 & 2 & 2 & -2 & 0 & 0 & 0 & 0 & 0 & 0 & 0 & 0 & 0 & 0 & 0 & 0 & 4 & -4
\end{array}\right]
\end{equation*}
\begin{equation*}
T=\mathrm{diagonal}(1, 1, 1, 1, 1, 1, 1, 1, \zeta_4^3, \zeta_4, \zeta_4^3, \zeta_4, \zeta_4^3, \zeta_4, \zeta_4^3, \zeta_4, \zeta_4^3, \zeta_4, \zeta_4^3, \zeta_4, \zeta_4^3, \zeta_4 ),\text{ with }\zeta_4:=\exp(\pi i/2)
\end{equation*}
    \caption{The modular data of the largest incomplete modular fusion category with less than $4$ twists}%
    \label{fig:twoo}%
\end{figure}

\par One may be tempted from this data to conjecture that only finitely many modular fusion categories exist up to braided equivalence with a fixed Frobenius-Schur exponent and incomplete sets of twists, but even for four distinct twists this is false by taking products of modular fusion categories with complete sets of twists with those with incomplete sets.  By insisting the modular fusion category has no nontrivial factorizations, i.e.\ it is prime, the following question is natural.

\begin{question}
Does there exist an infinite family of prime modular fusion categories of fixed Frobenius-Schur exponent $N\in\mathbb{Z}_{\geq1}$ whose twists are proper subsets of the $N^\mathrm{th}$ roots of unity?
\end{question}

\par A powerful application of our classification is proof that vast infinite families of representations $\rho_\mathcal{C}'$ of $\mathrm{SL}(2,\mathbb{Z}/n\mathbb{Z})$ for $n\in\mathbb{Z}_{\geq1}$ cannot arise from the modular group representation $\rho_\mathcal{C}$ of a modular fusion category $\mathcal{C}$ (see Section \ref{seccong} for details).  This is a problem that dates back over $30$ years to \cite{MR1227715}, but has made a strong resurgence in the classification of modular fusion categories by rank (e.g.\  \cite{MR4630478,ng2023classificationmodulardatarank}) and other related studies (e.g.\ \cite{MR4389082,MR4834521,MR4793461}).  One of the technical challenges is the appearance of $t$-eigenvalues with high multiplicity.  The reader may reference \cite[Section V.A]{ng2023classificationmodulardatarank} to see these challenges in practice.  Our approach faces these difficulties head-on and essentially demands that $t$-eigenvalues appear with high multiplicity.  For example, there exist irreducible $\mathrm{SL}(2,\mathbb{Z}/4\mathbb{Z})$-representations with $t$-eigenvalues $\{\zeta_4\}$, $\{\zeta_4,\zeta_4^3\}$, and $\{1,\zeta_4,\zeta_4^3\}$; any nonnegative integer linear combination of these can potentially be realized as $\rho_\mathcal{C}'$ for a modular fusion category $\mathcal{C}$ with either two or three twists.  There are no results in the current literature showing that the number of realizable linear combinations is even finite.  But our classification dictates prescisely which linear combinations are realizable.  The interested reader could take any nonnegative integer linear combination of the irreducible $\mathrm{SL}(2,\mathbb{Z}/n\mathbb{Z})$-representions from Lemmas \ref{one} and \ref{two} with combined at most three distinct $t$-eigenvalues and reach similar conclusions.

\par The manuscript contains a brief recollection and thorough list of sources of the definitions and tools needed from the study of modular fusion categories in Sections \ref{sec:mood}--\ref{seccong} (and Appendix \ref{appa}), followed by an alternative proof of the classification of spherical fusion categories of Frobenius-Schur exponent $2$ \cite{MR4202289} in Section \ref{sec:order2}.  The main content of our novel classification is Section \ref{sec:tootwist} where one can find a description of the exceptional categories with incomplete sets of twists in Figures \ref{fig:two} and \ref{fig:three}.  The notation used throughout is consistent and nearly standard, albeit terse since some formulas and equations are quite lengthy.  We collect the commonly used symbols in Figure \ref{fig:notation} for continued reference of the reader.

\begin{figure}[H]
\centering
\begin{equation*}
\begin{array}{|c|c|}
\hline \mathrm{Notation} & \mathrm{Meaning} \\\hline\hline
\mathcal{C} &   \text{modular fusion category} \\
\mathcal{O}(\mathcal{C}) & \text{set of isomorphism classes of simple objects of }\mathcal{C} \\
S,T & \text{unnormalized modular data of }\mathcal{C} \\
s,t & \text{normalized modular data of }\mathcal{C}\\
\theta_X,t_X & \text{eigenvalues of }T\text{ and }t\text{ corresponding to simple }X\in\mathcal{C},\text{ respectively}\\
N,n & \text{orders of }T\text{ and }t,\text{ respectively} \\
\rho_\mathcal{C} & \mathrm{SL}(2,\mathbb{Z})-\text{representation defined by }s,t \\
\rho_\mathcal{C}' & \mathrm{SL}(2,\mathbb{Z}/n\mathbb{Z})-\text{representation defined by }s,t \\
\mathcal{C}_\zeta & \text{set of isomorphism classes of simple }X\in\mathcal{C}\text{ with }\theta_X=\zeta \\
D_\zeta & \text{sum of squares of categorical dimensions of }X\in\mathcal{C}_\zeta \\
D & \text{global dimension of }\mathcal{C} \\
\tau_m & m^\mathrm{th}\text{ Gauss sum of }\mathcal{C};\text{ typically }m=\pm1 \\
\xi & \text{first multiplicative central charge of }\mathcal{C} \\
\gamma & \text{chosen cube root of }\xi \\
\mathcal{Z}(\mathcal{C}) & \text{double or ``center'' of }\mathcal{C} \\
\zeta_m & \text{primitive }m^\mathrm{th}\text{ root of unity }\exp(2\pi i/m) \\\hline
\end{array}
\end{equation*}
    \caption{Recurring notation}%
    \label{fig:notation}%
\end{figure}

\section{Modular fusion categories}\label{sec:mood}

\par We will primarily adhere to the definitions and notation found in the standard textbook \cite{tcat} wherein a modular tensor category is a fusion category \cite[Definition 4.1.1]{tcat} with a spherical structure \cite[Definition 4.7.14]{tcat} and nondegenerate braiding \cite[Section 8.20]{tcat}.  In particular, the set of isomorphism classes of simple objects of a modular fusion category $\mathcal{C}$ will be denoted $\mathcal{O}(\mathcal{C})$ for brevity, and is assumed finite.  All categories will be considered over the complex numbers $\mathbb{C}$.  Our primary digression from \cite{tcat} will be to use the term \emph{modular fusion category} in place of modular tensor category, and leave the latter to include the study of similar non-semisimple categories, or those with $|\mathcal{O}(\mathcal{C})|$ infinite.  Essentially none of the categorical minutiae of modular fusion categories will be used explicitly in arguments moving forward so there is no need to expound these details here.  There are five main tools used in the arguments below.  The most important is linear algebra and representation theory: the modular data and congruence subgroup representation(s) corresponding to to a modular fusion category which deserve their own section (Section \ref{seccong}).  Two concepts which are needed to a lesser extent are categorical: results specific to the modular data of doubles of spherical fusion categories (Section \ref{subsecdouble}), and group actions and quotients of modular fusion categories (Section \ref{subsecgroup}).  These sections will introduce notation and vocabulary for the necessary modular fusion categories found in the statements of Theorems \ref{th:twotwists} and \ref{th:thre}, and the curious reader may reference the given sources for technical details.  Lastly, we will require sporadic use of the (categorical) Galois action on $\mathcal{O}(\mathcal{C})$ \cite{gannoncoste} which arises from a (numerical) Galois action on the modular data of $\mathcal{C}$.  An important consequence of this categorical Galois action is that certain numerical invariants of modular fusion categories are algebraic $d$-numbers in the sense of \cite{codegrees}.  We include both of these final topics in an appendix for the uninitiated reader (Appendix \ref{appa}).


\subsection{Modular data of doubles}\label{subsecdouble}

\par To each spherical fusion category $\mathcal{C}$ one can associate a modular fusion category $\mathcal{Z}(\mathcal{C})$, known as the double or center of $\mathcal{C}$ \cite[Sections 7.13 \& 8.5]{tcat} with trivial central charge and global dimension $\dim(\mathcal{Z}(\mathcal{C}))=\dim(\mathcal{C})^2$.  The computation of $\mathcal{Z}(\mathcal{C})$ given $\mathcal{C}$ is not straightforward; even the growth of the rank of $\mathcal{Z}(\mathcal{C})$ relative to that of $\mathcal{C}$ is unknown in generality at this time.  Simple objects of $\mathcal{Z}(\mathcal{C})$ are not-necessarily-simple objects of $\mathcal{C}$ with data describing how this object commutes with others in $\mathcal{C}$ \cite[Definition 7.13.1]{tcat}.  As such, there is an obvious forgetful tensor functor $F:\mathcal{Z}(\mathcal{C})\to\mathcal{C}$ remembering only the object itself.  The adjoint of this functor $I:\mathcal{C}\to\mathcal{Z}(\mathcal{C})$, called the induction functor \cite[Section 9.2]{tcat}, is a main source of information regarding $\mathcal{Z}(\mathcal{C})$.  For example, each nontrivial $X\in\mathcal{O}(\mathcal{C})$ must satisfy the zero trace condition \cite[Theorem 2.5]{MR3427429}
\begin{equation}\label{zerotrace}
0=\sum_{Y\in\mathcal{O}(\mathcal{Z}(\mathcal{C}))}[Y:I(X)]\dim(Y)\theta_Y,
\end{equation}
while each $X\in\mathcal{O}(\mathcal{C})$ with $X\cong X^\ast$ must satisfy the squared trace condition \cite[Theorem 2.7]{MR3427429}
\begin{equation}\label{zero2trace}
\pm\dim(\mathcal{C})=\sum_{Y\in\mathcal{O}(\mathcal{Z}(\mathcal{C}))}[Y:I(X)]\dim(Y)\theta_Y^2,
\end{equation}
where $[Y:I(X)]=\dim_\mathbb{C}\mathrm{Hom}_{\mathcal{Z}(\mathcal{C})}(Y,I(X))$.  Such results are potent constraints on the types of modular fusion categories which can arise as doubles of spherical fusion categories.

\begin{example}\label{exgroup}
Fusion categories $\mathcal{C}$ such that all $X\in\mathcal{O}(\mathcal{C})$ are invertible, i.e.\ $X\otimes X^\ast\cong\mathbbm{1}$, are known as pointed fusion categories and are monoidally equivalent to the categories $\mathrm{Vec}_G^\omega$ of finite-dimensional $G$-graded vector spaces for a finite group $G$ where the associativity of the tensor product has been twisted by some $\omega\in H^3(G,\mathbb{C}^\times)$.  Each pointed fusion category has a canonical spherical structure \cite[Corollary 9.6.6]{tcat} such that the categorical dimensions of all simple objects are $1$.  On one hand, the doubles $\mathcal{Z}(\mathrm{Vec}_G^\omega)$ of these spherical fusion categories, often called twisted doubles of finite groups, are classical and often overlooked as being fully understood --- indeed, the modular data of such categories can be computed from explicit formulas \cite{MR1770077,MR4257620}.  On the other hand, very basic but important observations about this infinite family of modular fusion categories have only recently come to light.  For example, it has been shown that modular data is not a complete invariant of modular fusion categories using examples of twisted doubles of finite groups \cite[Corollary 4.2]{data}.  The current exposition gives another way in which these categories produce examples exhibiting novel properties.
\end{example}


\subsection{Group actions and de-equivariantization}\label{subsecgroup}

\par The details of this topic can be found in \cite[Section 8.23]{tcat} and Sections 2, 3, and 5 of \cite{MR3039775}.  De-equivariantization can be used as a means of quotienting modular fusion categories by symmetry to acquire a modular fusion category of smaller dimension, making it an indispensable tool akin to the ability to quotient groups by normal subgroups.  This analogy is literal when applied to the study of modular fusion categories associated to finite groups \cite[Example 8.23.7]{tcat}.

\par Given a modular fusion category $\mathcal{C}$ and a connected \'etale algebra $A$ in $\mathcal{C}$ \cite[Section 3.1]{MR3039775}, one can consider the category of local $A$-module objects  in $\mathcal{C}$, denoted $\mathcal{C}_A^0$ \cite[Section 3.5]{MR3039775}.  The carefully chosen hypotheses on the algebra $A$ guarantee that $\mathcal{C}_A^0$ is again a modular fusion category \cite[Corollary 3.30]{MR3039775}, and that \cite[Theorem 4.5]{MR1936496}
\begin{equation}
\dim(\mathcal{C}_A^0)=\dfrac{\dim(\mathcal{C})}{\dim(A)^2}
\end{equation}
This construction gives a well-defined equivalence relation on the collection of all modular fusion categories up to braided equivalence. Specifically, if $\mathcal{C}$ and $\mathcal{D}$ are modular fusion categories, then we say $\mathcal{C}$ and $\mathcal{D}$ are \emph{Witt equivalent} if there exist connected \'etale algebras $A$ and $B$ in $\mathcal{C}$ and $\mathcal{D}$, respectively, such that $\mathcal{C}_A^0\simeq\mathcal{D}_B^0$ is a braided equivalence \cite[Definition 5.1 \& Proposition 5.15]{MR3039775}.  Each Witt equivalence class contains a unique representative up to braided equivalence which cannot be reduced further \cite[Theorem 5.13]{MR3039775}, i.e.\ it contains no nontrivial connected \'etale algebras.

\begin{example}
A pointed (see Example \ref{exgroup}) modular fusion category is known as a metric group and is defined by a finite group $G$ (necessarily abelian) and a nondegenerate quadratic form $q:G\to\mathbb{C}^\times$ \cite[Section 8.4]{tcat}.  For this reason we will use the notation $\mathcal{C}(G,q)$ to refer to such modular fusion categories.  The Witt equivalence classes of metric groups are completely known \cite[Section 5.3]{MR3039775} and can be described by the classes for $p$-groups for primes $p$; for $p=2$ there are $16$ distinct classes while for odd $p$ there are $4$ distinct classes.
\end{example}

\begin{example}
Let $\mathcal{C}$ be a modular fusion category which is \emph{weakly group-theoretical} in the sense of \cite[Definition 9.8.1]{tcat}.  These are roughly the modular fusion categories which can be constructed using the data of finite groups.  Weakly group-theoretical categories include those which are solvable \cite{solvable} and nilpotent \cite{MR2383894} and this property is preserved under the de-equivariantization process outlined above.  It was shown in \cite[Theorem 1.1]{MR3770935} that, up to braided equivalence, the unique minimal representative of the Witt class of a weakly group-theoretical modular fusion category is the product of a metric group and one of the eight braided equivalence classes of Ising fusion categories \cite[Corollary B.16]{DGNO}.
\end{example}


\section{Congruence subgroup representations}\label{seccong}

Let $\mathcal{C}$ be a modular fusion category and $\rho_\mathcal{C}$ be its associated $\mathrm{SL}(2,\mathbb{Z})$-representation defined by a normalized modular data $s,t$ as in \cite[Remark 8.16.2]{tcat}.  In particular we will denote the first multiplicative central charge of $\mathcal{C}$ as $\xi:=\xi(\mathcal{C})$ and the chosen cube root of $\xi$ by $\gamma$.  It is known that $\rho_\mathcal{C}$ can be factored through $\mathrm{SL}(2,\mathbb{Z}/n\mathbb{Z})$ where again $n\in\mathbb{Z}_{\geq1}$ is the order of $t$ \cite[Theorem 6.8]{MR2725181}.  Thus to each modular fusion category $\mathcal{C}$ one can associate a representation $\rho'_\mathcal{C}$ of the finite group $\mathrm{SL}(2,\mathbb{Z}/n\mathbb{Z})$ such that $\rho_\mathcal{C}=\rho'_\mathcal{C}\pi$ where $\pi:\mathrm{SL}(2,\mathbb{Z})\to\mathrm{SL}(2,\mathbb{Z}/n\mathbb{Z})$ is the canonical projection.  The benefit being that the irreducible finite-dimensional complex representations of $\mathrm{SL}(2,\mathbb{Z}/n\mathbb{Z})$ have been described explicitly, up to isomorphism \cite{MR444787,MR444788}.  These irreducible representations of $\mathrm{SL}(2,\mathbb{Z}/n\mathbb{Z})$ can be described from the irreducible representations of $\mathrm{SL}(2,\mathbb{Z}/p^\lambda\mathbb{Z})$ for $p^{\lambda}$ dividing $n$ where $p\in\mathbb{Z}_{\geq2}$ is prime and $\lambda\in\mathbb{Z}_{\geq1}$ (refer to \cite[Section 3]{MR1354262}).  We will often refer to the $t$-eigenvalues or $t$-spectrum of a modular fusion category, or of the representations $\rho_\mathcal{C}$ or the congruence subgroup representation $\rho_\mathcal{C}'$.  In the case $\rho_\mathcal{C}'$ is reducible, we may refer to the $t$-spectrum of individual summands which should not cause any confusion in context.
\par The following (Lemmas \ref{one} and \ref{two}) is a complete list of isomorphism classes of irreducible $\mathrm{SL}(2,\mathbb{Z}/n\mathbb{Z})$-representations with less than four $t$-eigenvalues.  Most notably, every irreducible representation listed below is nondegenerate in the sense that its $t$-spectrum is multiplicity-free.  This is a crucial observation for future use of results such as \cite[Lemma 5.2.2]{MR4834521}.

\begin{lemma}\label{one}
If $\rho$ is an irreducible $\mathrm{SL}(2,\mathbb{Z}/n\mathbb{Z})$-representation for some $n\in\mathbb{Z}_{\geq1}$ with two or fewer $t$-eigenvalues, then $\rho$ is isomorphic to one of the representations listed in Figure \ref{fig:twots} or a tensor product of these with a one-dimensional representation of coprime level.
\begin{figure}[H]
\centering
\begin{equation*}
\begin{array}{|cccc|}\hline
 \mathrm{Level} & \mathrm{Name} & \#/\cong &t-\mathrm{spectra} \\\hline
12/\gcd(12,j),\,\,0\leq j\leq11 & C_j & 12 & \{\zeta_{12}^j\}\\\hline
2 & N_1(\chi_1) & 1 & \{-1,1\} \\
3 & N_1(\chi) & 1 & \{\zeta_3,\zeta_3^2\}\\
3 & N_1(\chi)\otimes C_4,N_1(\chi)\otimes C_8 & 2 & \{1,\zeta_3^2\},\{1,\zeta_3\}\\
4 & N_2(\chi) & 1 & \{\zeta_4,\zeta_4^3\} \\
5 & R_1(1,\chi_{-1}),R_2(r,\chi_{-1}) & 2 &\{\zeta_5,\zeta_5^4\},\{\zeta_5^2,\zeta_5^3\} \\
8 & N_3(\chi)_+\otimes C_j & 4 & \{\zeta_8^3,\zeta_8^5\},\{\zeta_8^1,\zeta_8^7\}\\
 &  &  & \{\zeta_8^1,\zeta_8^3\},\{\zeta_8^5,\zeta_8^7\} \\
\hline
\end{array}
\end{equation*}
    \caption{Irreducible\ $\mathrm{SL}(2,\mathbb{Z}/n\mathbb{Z})$-reps.\ with less than three distinct $t$-eigenvalues up to isomorphism.  Their given names are from \cite{MR444787,MR444788} and their levels are the collective order of their $t$-spectra.}%
    \label{fig:twots}%
\end{figure}
\end{lemma}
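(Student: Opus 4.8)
The plan is to reduce the classification of irreducible $\mathrm{SL}(2,\mathbb{Z}/n\mathbb{Z})$-representations with at most two distinct $t$-eigenvalues to the prime-power case and then invoke the explicit tables of Nobs and Wolfart \cite{MR444787,MR444788}. First I would use the decomposition of $\mathrm{SL}(2,\mathbb{Z}/n\mathbb{Z})$ as a product over the prime-power factors of $n$, so that any irreducible representation $\rho$ factors as a tensor product $\rho \cong \bigotimes_{p^\lambda \| n} \rho_{p^\lambda}$ of irreducibles $\rho_{p^\lambda}$ of $\mathrm{SL}(2,\mathbb{Z}/p^\lambda\mathbb{Z})$ (this is the content of \cite[Section 3]{MR1354262} referenced above). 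The key observation is multiplicative: the $t$-spectrum of a tensor product is the pointwise product $\{t_i t_j\}$ of the constituent $t$-spectra, so if $\rho$ has at most two distinct $t$-eigenvalues, each tensor factor is tightly constrained.

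The combinatorial heart of the argument is then to analyze when a product of two or more multisets of roots of unity can have only one or two distinct values. If $\rho$ has a single $t$-eigenvalue, then every tensor factor must itself be one-dimensional with a single eigenvalue, and since one-dimensional representations of $\mathrm{SL}(2,\mathbb{Z}/p^\lambda\mathbb{Z})$ exist only for the primes dividing $12$, the total eigenvalue is forced to lie in $\langle \zeta_{12}\rangle$; this yields exactly the family $C_j$ of level dividing $12$. If $\rho$ has precisely two distinct $t$-eigenvalues, then by the product structure exactly one tensor factor $\rho_{p^\lambda}$ can contribute more than one eigenvalue — all others must be one-dimensional — and that distinguished factor must itself carry exactly two $t$-eigenvalues. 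So the problem collapses to: classify the prime-power-level irreducibles with exactly two $t$-eigenvalues, then tensor each by an arbitrary one-dimensional (necessarily coprime-level, hence $C_j$-type) representation.

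For the distinguished prime-power factor I would read off directly from the Nobs–Wolfart classification which irreducibles of $\mathrm{SL}(2,\mathbb{Z}/p^\lambda\mathbb{Z})$ have two-element $t$-spectra. The small-dimensional irreducibles at levels $2,3,4,5,8$ are exactly the discrete-series-type representations $N_1,N_2,N_3,R_1,R_2$ named in the table, whose $T$-eigenvalues are given explicitly in those references; checking that each has a two-element multiplicity-free spectrum is a finite verification against the published character/eigenvalue data, and one confirms that no other prime-power level produces a genuinely two-eigenvalue irreducible (higher levels and other primes force either a single eigenvalue, already captured by the $C_j$, or three or more distinct eigenvalues). The remaining bookkeeping is to pin down the precise coprimality condition and to record which one-dimensional twists $C_j$ are compatible with each factor — e.g. the level-$3$ representation $N_1(\chi)$ tensored by $C_4$ or $C_8$ to produce $\{1,\zeta_3^2\}$ and $\{1,\zeta_3\}$, as listed.

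The main obstacle I expect is not conceptual but one of careful case management: ensuring the enumeration of two-eigenvalue prime-power irreducibles is genuinely exhaustive and that no spurious repetitions or omissions occur when tensoring by the $C_j$. In particular one must verify that the listed representations are pairwise non-isomorphic and that the claimed $t$-spectra are correct and multiplicity-free, since the multiplicity-free property is flagged as crucial for downstream use. This requires matching my notation to that of \cite{MR444787,MR444788} precisely and confirming that the eigenvalue products genuinely yield only the stated two-element sets rather than accidentally collapsing to one element or expanding to three; guarding against such coincidences among roots of unity is the delicate step.
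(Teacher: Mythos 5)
Your proposal follows essentially the same route as the paper: decompose $\rho$ as a tensor product of irreducible prime-power-level factors via \cite[Section 3]{MR1354262}, observe that at most one factor can carry more than one $t$-eigenvalue (else coprimality of the levels forces at least four distinct eigenvalues), and then read the one- and two-eigenvalue prime-power irreducibles off the Nobs--Wolfart tables. The paper's proof is just a terser version of this same argument, so your plan is correct and complete in outline.
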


\begin{proof}
Let $J$ be the set of the prime integers and assume $n=\prod_{p\in J}p^{a_p}$ for some $a_p\in\mathbb{Z}_{\geq0}$.  Each irreducible $\mathrm{SL}(2,\mathbb{Z}/n\mathbb{Z})$-representation factors as $\rho\cong\bigotimes_{p\in J}\rho_{p^{a_p}}$ where $\rho_{p^{a_p}}$ is an irreducible $\mathrm{SL}(2,\mathbb{Z}/p^{a_p}\mathbb{Z})$-representation \cite[Lemma 7]{MR1354262}.  We say that irreducible $\mathrm{SL}(2,\mathbb{Z}/p^{a_p}\mathbb{Z})$-representations are of \emph{level} $p^{a_p}$.  Let $\zeta\in\mathbb{Q}(\zeta_n)$ be a $t$-eigenvalue of $\rho$ where $\zeta_n:=\exp(2\pi i/n)$.

\par If $\zeta$ is the unique $t$-eigenvalue of $\rho$, then each $\rho_{p^{a_p}}$ has a unique $t$-eigenvalue.  By the classification of irreducible representations of level $p^{a_p}$ for all $p\in J$, one finds only one-dimensional representations satisfy this condition.  Moreover $\rho$ is one-dimensional and there are $12$ such isomorphism classes determined by $\zeta$.

\par Now assume $\zeta\neq\zeta'$ are the $t$-eigenvalues of $\rho$.  If all irreducible factors $\rho_{p^{a_p}}$ have a unique $t$-eigenvalue, we have already explained that $\rho$ is one-dimensional, against our assumptions.  On the other hand, if there exist factors $\rho_{p^{a_p}}$ and $\rho_{q^{a_q}}$ for some primes $p\neq q$ both having two or more distinct $t$-eigenvalues, then $\rho$ would have at least four distinct $t$-eigenvalues.  Therefore $\rho$ has a unique factor $\rho_{p^{a_p}}$ possessing exactly two distinct $t$-eigenvalues.  By the classification of irreducible representations of level $p^{a_p}$ for all $p\in J$, one finds only two-dimensional representations satisfy this condition.  Therefore $\rho$ is two-dimensional, and the isomorphism classes of the unique two-dimensional factor can be listed in Figure \ref{fig:twots}.
\end{proof}

\begin{lemma}\label{two}
If $\rho$ is an irreducible $\mathrm{SL}(2,\mathbb{Z}/n\mathbb{Z})$-representation for some $n\in\mathbb{Z}_{\geq1}$ with exactly three distinct $t$-eigenvalues, then $\rho$ is isomorphic to one of the representations in Figure \ref{fig:threets} or a tensor product of these with a one-dimensional representation of coprime level.
\begin{figure}[H]
\centering
\begin{equation*}
\begin{array}{|cccc|}
\hline\mathrm{Level} & \mathrm{Name} & \#/\cong &t-\mathrm{spectra} \\\hline
3 & N_1(\chi_1) & 1 & \{1,\zeta_3,\zeta_3^2\} \\
4 & D_2(\chi)_+,D_2(\chi)_+\otimes C_6 & 2 & \{1,-1,\zeta_4\},\{1,-1,\zeta_4^3\}\\
 &  D_2(\chi)_+\otimes C_9,D_2(\chi)_+\otimes C_3 & 2 & \{1,\zeta_4,\zeta_4^3\},\{-1,\zeta_4,\zeta_4^3\} \\
5 & R_1(1,\chi_1),R_1(2,\chi_1) & 2 & \{1,\zeta_5,\zeta_5^4\},\{1,\zeta_5^2,\zeta_5^3\} \\
7 & R_1(1,\chi_{-1}),R_1(2,\chi_{-1}) & 2 & \{\zeta_7,\zeta_7^2,\zeta_7^4\},\{\zeta_7^3,\zeta_7^5,\zeta_7^6\} \\
8 & R_3^0(1,3,\chi)_\pm,R_3^0(1,3,\chi)_\pm\otimes C_6 & 4 & \{\pm1,\zeta_8,\zeta_8^5\},\{\pm1,\zeta_8^3,\zeta_8^7\} \\
 & R_3^0(1,3,\chi)_\pm\otimes C_3,R_3^0(1,3,\chi)_\pm\otimes C_9 & 4 & \{\pm\zeta_4^3,\zeta_8^3,\zeta_8^7\},\{\mp\zeta_4,\zeta_8,\zeta_8^5\} \\
16 &R_4^0(1,1,\chi)_\pm\otimes C_j,R_4^0(3,1,\chi)_\pm\otimes C_j & 16 & \textnormal{(refer to \cite[Table A.1]{MR3632091})}\\
&\textnormal{for }j=0,3,6,9 & & \\\hline
\end{array}
\end{equation*}
    \caption{Irreducible $\mathrm{SL}(2,\mathbb{Z}/n\mathbb{Z})$-reps.\ with exactly three distinct $t$-eigenvalues up to isomorphism.  Their given names are from \cite{MR444787,MR444788} and their levels are the collective order of their $t$-spectra.}%
    \label{fig:threets}%
\end{figure}
\end{lemma}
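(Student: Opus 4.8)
The plan is to mirror the proof of Lemma \ref{one}, reducing the global claim to a local classification of prime-power-level irreducibles and then reading off the survivors from the explicit Nobs--Wolfart tables. First I would write $n=\prod_{p\in J}p^{a_p}$ over the set $J$ of primes and apply the tensor factorization $\rho\cong\bigotimes_{p\in J}\rho_{p^{a_p}}$ of \cite[Lemma 7]{MR1354262}, where each $\rho_{p^{a_p}}$ is an irreducible $\mathrm{SL}(2,\mathbb{Z}/p^{a_p}\mathbb{Z})$-representation of level $p^{a_p}$.

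The key reduction is that the number $d(\rho)$ of distinct $t$-eigenvalues is multiplicative across these factors. Indeed, the matrix $\begin{pmatrix}1&1\\0&1\end{pmatrix}$ has order exactly $p^{a_p}$ in $\mathrm{SL}(2,\mathbb{Z}/p^{a_p}\mathbb{Z})$, so every $t$-eigenvalue of $\rho_{p^{a_p}}$ is a $p^{a_p}$-th root of unity; the canonical isomorphism $\mu_n\cong\prod_{p\in J}\mu_{p^{a_p}}$ of groups of roots of unity then shows that distinct tuples of local $t$-eigenvalues have distinct products, whence $d(\rho)=\prod_{p\in J}d(\rho_{p^{a_p}})$. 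Since $3$ is prime, $\prod_{p\in J}d(\rho_{p^{a_p}})=3$ forces a unique prime $p_0$ with $d(\rho_{p_0^{a_{p_0}}})=3$ and $d(\rho_{p^{a_p}})=1$ for every other $p$. As noted in the proof of Lemma \ref{one}, a prime-power-level irreducible with a single $t$-eigenvalue is one-dimensional, so every factor other than $\rho_{p_0^{a_{p_0}}}$ is one-dimensional and their product is a one-dimensional representation of level coprime to $p_0$. Hence $\rho$ is the tensor product of the exactly-three-eigenvalue factor $\rho_{p_0^{a_{p_0}}}$ with a one-dimensional representation of coprime level, precisely the asserted form.

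It then remains to enumerate, for each prime power $p^a$, the irreducible $\mathrm{SL}(2,\mathbb{Z}/p^a\mathbb{Z})$-representations with exactly three distinct $t$-eigenvalues, which I would extract directly from \cite{MR444787,MR444788} in the organized form of \cite[Section 3]{MR1354262}, recording the name, number of isomorphism classes, and $t$-spectrum of each surviving family to populate Figure \ref{fig:threets}; the level-$16$ spectra are imported from \cite[Table A.1]{MR3632091}. The substance of the argument, and the main obstacle, is the bookkeeping that shows the list terminates. For odd primes $p\geq5$ only the smallest levels contribute---the $R_1$ families at levels $5$ and $7$---and one checks from the explicit spectra that larger odd prime powers produce strictly more than three distinct $t$-eigenvalues, while the level-$3$ contribution $N_1(\chi_1)$ is handled separately. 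The $2$-power case is by far the most delicate: the richer representation theory of $\mathrm{SL}(2,\mathbb{Z}/2^\lambda\mathbb{Z})$ and its anomaly yield the $D_2$, $R_3^0$, and $R_4^0$ families up to level $16$, and I expect the completeness check here---verifying from the Nobs--Wolfart $t$-spectra that no $2$-power level beyond $16$ contributes exactly three distinct eigenvalues---to consume most of the effort.
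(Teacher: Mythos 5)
Your proposal is correct and follows essentially the same route as the paper, which simply observes that the argument of Lemma \ref{one} carries over: the tensor factorization into prime-power-level irreducibles forces exactly one factor to carry all three distinct $t$-eigenvalues (since two factors each with at least two would yield at least four), the remaining factors being one-dimensional of coprime level, after which the table is read off from the cited classifications. Your explicit multiplicativity statement $d(\rho)=\prod_p d(\rho_{p^{a_p}})$ is just a cleaner packaging of the same counting step the paper uses implicitly.
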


\begin{proof}
The proof is the same as that of Lemma \ref{one} since any irreducible representation can only be a nontrivial tensor product of irreducible representations of prime power level with one, two, or three distinct $t$-eigenvalues.
\end{proof}

\begin{proposition}\label{copropprime}
Let $\mathcal{C}$ be a modular fusion category.  If all $t$-eigenvalues of $\mathcal{C}$ have coprime order, then $\theta_X\in\{1,-1\}$ for all simple $X\in\mathcal{C}$.
\end{proposition}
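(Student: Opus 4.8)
The plan is to combine three inputs in order: the always-present twist of the unit object, the Galois symmetry of the modular data, and the arithmetic linking $\mathrm{FSexp}(\mathcal{C})$ to the global dimension $D$. Throughout I use the normalization $t_X=\gamma^{-1}\theta_X$ with $\gamma^3=\xi$ from Section~\ref{seccong}. Since $\theta_{\mathbbm{1}}=1$, the value $\gamma^{-1}$ is itself a $t$-eigenvalue, and every $t$-eigenvalue is $\gamma^{-1}$ times a twist; so passing between the (coprime-order) $t$-spectrum and the twists is a uniform rescaling. Writing the distinct $t$-eigenvalues as $u_1=\gamma^{-1},u_2,\dots,u_k$ of pairwise coprime orders $m_1,\dots,m_k$, each twist is $\theta^{(i)}=\gamma u_i=u_1^{-1}u_i$.

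The main structural step is to exploit that $\left(\begin{smallmatrix}1&1\\0&1\end{smallmatrix}\right)$ is conjugate to its $a^2$-th power in $\mathrm{SL}(2,\mathbb{Z}/n\mathbb{Z})$ for every $a$ coprime to $n$ (equivalently, the categorical Galois action of Appendix~\ref{appa}). Thus the set of distinct $t$-eigenvalues is closed under $\zeta\mapsto\zeta^{a^2}$. Because the $m_i$ are pairwise coprime, each order is realized by a unique eigenvalue, forcing $u_i^{a^2}=u_i$, i.e.\ $(\mathbb{Z}/m_i\mathbb{Z})^\times$ has exponent dividing $2$ and $m_i\mid 24$. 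Pairwise coprimality of divisors of $24=2^3\cdot 3$ then yields $n=\prod_i m_i\mid 24$, whence every twist lies in $\mu_{24}$ and $N\mid n\mid 24$, and there are at most three distinct $t$-eigenvalues, of orders a pairwise coprime subset of the divisors of $24$. Applying the same relation to $D_\zeta$ gives $\sigma_a(D_\theta)=D_{\theta^{a^2}}=D_\theta$, so each $D_\zeta$ is Galois-fixed; being a totally positive algebraic integer it is a positive rational integer, and hence $D=\sum_\zeta D_\zeta\in\mathbb{Z}_{>0}$. (At this point one could alternatively feed the bound "at most three distinct $t$-eigenvalues of coprime order" into Lemmas~\ref{one} and \ref{two} to constrain the irreducible summands of $\rho_{\mathcal{C}}'$.)

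It remains to eliminate any twist of order $>2$ by a finite check over these few configurations. Here the Gauss-sum identities $|\tau_1|^2=D$ and $\tau_1/\sqrt{D}=\xi=\gamma^3$ are used together with the hypothesis: the relation $\xi=\gamma^3$ pins $\gamma$, hence pins the orders $\mathrm{ord}(\gamma^{-1}\theta^{(i)})$ to the twist configuration, and the coprimality of those orders is a stringent compatibility condition. The decisive additional input is the arithmetic tie between exponent and dimension (Cauchy's theorem for fusion categories: a rational prime divides $N$ iff it divides $D\in\mathbb{Z}_{>0}$), which excludes configurations that the Gauss-sum relations alone do not: for instance the $t$-spectrum $\{1,-1,\zeta_3\}$ with block dimensions $(2,1,1)$ satisfies $|\tau_1|^2=D=4$ with integral $D_\zeta$, yet $3\mid N$ while $3\nmid 4$. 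Running through the handful of surviving order-$\{1,2^a,3\}$ patterns, each either violates Cauchy's theorem or is incompatible with $\xi=\gamma^3$ and the coprimality of the $t$-eigenvalue orders. Once no twist has order $>2$, the unit eigenvalue gives $\gamma^{-1}=u_1\in\{1,-1\}$, so $\gamma\in\{1,-1\}$ and $\theta_X=\gamma t_X\in\{1,-1\}$, as required.

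The step I expect to be the main obstacle is this final elimination. The reduction to finitely many configurations (and to $N\mid 24$, $D\in\mathbb{Z}_{>0}$) is clean, but purely numerical constraints admit spurious solutions, so the real work is marshalling the exponent--dimension arithmetic together with the self-consistency of the central-charge normalization and the coprimality hypothesis to kill every impostor.
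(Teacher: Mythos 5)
Your opening step---the square Galois action closes the $t$-spectrum, and pairwise coprimality of the orders then forces each eigenvalue to be fixed, giving $t_X^{24}=1$ and at most three distinct eigenvalues---is exactly the paper's first move and is fine. After that the proposal has two genuine problems. First, the claim $\sigma_a(D_\zeta)=D_{\zeta^{a^2}}=D_\zeta$, hence $D_\zeta\in\mathbb{Z}$ and $D\in\mathbb{Z}$, misstates the Galois transformation law: by Appendix~\ref{appa} one has $\dim(\hat{\sigma}(X))^2=(D/\sigma(D))\,\sigma(\dim(X))^2$, so $\sigma(D_\zeta)=(\sigma(D)/D)\,D_{\zeta'}$ and a Galois-stable block only yields that $D_\zeta$ is an algebraic $d$-number (which is how the paper uses this fact elsewhere), not that it is a rational integer. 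Integrality of $D$ is therefore not established by this route, and your subsequent appeal to Cauchy's theorem ``$p\mid N$ iff $p\mid D\in\mathbb{Z}_{>0}$'' rests on it.

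Second, and more seriously, the ``finite check over these few configurations'' that is supposed to eliminate every twist of order $3$, $4$, or $8$ is never carried out; it is asserted that each surviving pattern violates Cauchy or the compatibility of $\xi=\gamma^3$ with coprime eigenvalue orders, but that assertion \emph{is} the proposition. This is precisely where the paper does its actual work, and by a different mechanism: it decomposes $\rho_{\mathcal{C}}'$ using Lemmas~\ref{one} and \ref{two}, observes that a tensor product of irreducible factors of coprime levels would create non-coprime $t$-eigenvalues (so the only admissible summands of dimension $>1$ are the level-$2$ and level-$3$ two-dimensional representations), invokes the intersection criterion of \cite[Lemma 3.18]{MR3632091} and \cite[Lemma 5.2.2]{MR4834521} to control the one-dimensional summands, and in the level-$3$ case kills the residual twist sets $\{1,-1,\pm\omega\}$ by the reality of $\tau_1=a+b\omega$. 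No substitute for these steps appears in your argument. (Your one illustrative configuration is also off numerically: with twists $\{1,-1,\zeta_3\}$ and block dimensions $(2,1,1)$ in the order written, $\tau_1=1+\zeta_3$ has $|\tau_1|^2=1\neq4=D$, so the Gauss-sum identity already excludes it.) To repair the proof you would either need to reproduce the representation-theoretic elimination or supply a complete, correct case analysis in its place.
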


\begin{proof}
Recall the Galois action on $t$-eigenvalues in a modular fusion category: for each $\sigma\in\mathrm{Gal}(\overline{\mathbb{Q}}/\mathbb{Q})$ and $X\in\mathcal{O}(\mathcal{C})$, $t_{\hat{\sigma}(X)}=\sigma^2(t_X)$ \cite[Theorem II(iii)]{dong2015congruence} (see also Appendix \ref{appa}).  The coprime assumption implies the $t$-eigenvalues of $\mathcal{C}$ are individually fixed under square Galois conjugacy, i.e.\ we must have $t_X^{24}=1$ for all simple $X$ in $\mathcal{C}$.  Moreover there are at most three distinct $t$-eigenvalues of $\mathcal{C}$, and there is a very small list of possible irreducible summands of $\rho_\mathcal{C}'$ found in Lemmas \ref{one} and \ref{two}.  Any tensor product of coprime irreducible summands of $\rho_\mathcal{C}'$ would produce $t$-eigenvalues which are not coprime.  Hence any irreducible summand of $\rho_\mathcal{C}'$ of dimension greater than $1$ is either the unique two-dimensional level $2$ representation with $t$-spectrum $\{1,-1\}$ or the two-dimensional level $3$ representations $\{1,\omega\}$ for a primitive third root of unity $\omega$.  If $\rho_\mathcal{C}'$ is a sum of one-dimensional representations, then $\rho_\mathcal{C}'\cong m\rho$ for a unique one-dimensional representation $\rho$ and $m\in\mathbb{Z}_{\geq1}$ or else \cite[Lemma 3.18]{MR3632091} is violated.  It follows that $\mathcal{C}$ is trivial by \cite[Lemma 5.2.2]{MR4834521}.  If there exists a two-dimensional level $2$ irreducible summand of $\rho_\mathcal{C}'$ but none of level $3$, then any one-dimensional summand of $\rho_\mathcal{C}'$ must have $t$-eigenvalue $\pm1$ or else \cite[Lemma 3.18]{MR3632091} is violated.  We are then done since $N$ divides $n$ \cite[Theorem 7.1]{MR2725181}.  It remains to consider when $\rho_\mathcal{C}'$ has a two-dimensional level $3$ irreducible summand.  In this case, $\mathcal{C}$ is integral by \cite[Theorem 7.1]{MR2725181} since the maximal real subfield of $\mathbb{Q}(\zeta_{12})$ is $\mathbb{Q}$, therefore $\xi^8=1$ \cite[Proposition 2.6]{MR4836055} and moreover $(\gamma^{-1})^8=1$.  The $t$-eigenvalues of $\mathcal{C}$ are a subset of $\{1,-1,\omega\}$, thus $\gamma=\pm1$ and therefore the twists of $\mathcal{C}$ are either $\{1\}$, $\{1,-1\}$, $\{1,-1,\omega\}$, or $\{1,-1,-\omega\}$.  The latter two contradict $\xi=\pm1$ as the first Gauss sum $\tau_1=a+b\omega$ is not real when $b\neq0$.
\end{proof}

The following section describes modular fusion categories satisfying the hypotheses of Proposition \ref{copropprime}.


\section{$N=2$}\label{sec:order2}

Assume $\mathcal{C}$ is a modular fusion category with $N=1$.  Then abbreviating $D:=\dim(\mathcal{C})$ from this point forward, $D=\tau_1\tau_{-1}=D^2$ \cite[Proposition 8.15.4]{tcat}.  Therefore $D=1$ and thus $\mathcal{C}\simeq\mathrm{Vec}$ \cite[Theorem 2.3]{ENO}.  We consider in this section the case when $N=2$.  The results of this section are contained in \cite{MR4202289}, albeit in a lengthier exposition.

\begin{theorem}\label{prop2}
Let $\mathcal{C}$ be a modular fusion category with $N=2$.  Then $D=2^{2m}$ for some $m\in\mathbb{Z}_{\geq1}$, $\xi\in\{-1,1\}$, and $\mathcal{C}$ is unique up to braided equivalence for each chosen pair of $m\in\mathbb{Z}_{\geq1}$ and $\xi\in\{-1,1\}$.
\end{theorem}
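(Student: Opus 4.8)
The plan is to read off the invariants $\xi$ and $D$ from the Gauss sum together with the nilpotency of $\mathcal{C}$, and then to reduce uniqueness to a classification of quadratic forms by proving that $\mathcal{C}$ is pointed. First, since $N=2$, every twist satisfies $\theta_X\in\{1,-1\}$ (as in Proposition \ref{copropprime}), and both values occur because $\mathcal{C}\not\simeq\mathrm{Vec}$ forces $T\neq I$. Writing $D_{\pm1}$ for the sum of squared dimensions over the simples with twist $\pm1$, the reality of the twists gives $\tau_1=\tau_{-1}=D_1-D_{-1}$, and \cite[Proposition 8.15.4]{tcat} yields $\tau_1^2=\tau_1\tau_{-1}=D$. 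Hence $\tau_1$ is a real square root of $D$ and the first multiplicative central charge $\xi=\tau_1/\sqrt{D}$ lies in $\{1,-1\}$; I then take the real cube root $\gamma\in\{1,-1\}$ of $\xi$, so that $t_X=\gamma^{-1}\theta_X\in\{1,-1\}$ and the level is $n=2$.

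Next I would extract the dimension. Because $N=2$ is a prime power, $\mathcal{C}$ is nilpotent \cite{MR2383894}, hence a Deligne product of modular $p$-categories \cite{DGNO}; any odd-$p$ factor would contribute a nontrivial twist of odd order, which is excluded, so $D=2^{k}$. For integrality I would use that the entries of $S$ are cyclotomic integers in $\mathbb{Q}(\zeta_N)=\mathbb{Q}$ (Appendix \ref{appa}) and therefore rational integers; in particular each $\dim(X)=S_{\mathbbm{1},X}\in\mathbb{Z}$, whence $D_1,D_{-1}\in\mathbb{Z}$ and $\tau_1\in\mathbb{Z}$. Consequently $D=\tau_1^2$ is simultaneously a perfect square and a power of two, forcing $k=2m$ and $D=2^{2m}$ with $m\geq1$ (note $D\geq4$, since $D=2$ is not a square).

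The remaining task is uniqueness, and the step I expect to be the main obstacle is showing that $\mathcal{C}$ is pointed. My plan is to pass to the minimal representative of the Witt class of $\mathcal{C}$: being nilpotent it is weakly group-theoretical, so by \cite[Theorem 1.1]{MR3770935} that representative is a metric group times an Ising category. The twists occurring in any $\mathcal{C}_A^0$ form a subset of the twists of $\mathcal{C}$, since a local module inherits the twist of its underlying object; as the eight Ising categories have twists of order $16$, no Ising factor can appear, and the minimal representative is a metric group $\mathcal{C}(A,q)$ with $A$ of exponent $2$. Upgrading this Witt statement to the assertion that $\mathcal{C}$ itself is pointed is the delicate point: here I would argue that an integral nilpotent modular category of Frobenius--Schur exponent $2$ admits no simple object of dimension greater than $1$, thereby excluding the dimension-$2$ simples that do arise once $N=4$ (cf.\ Figure \ref{fig:twoo}). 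Concretely I would combine the squared-trace constraint \eqref{zero2trace} with the fact that $\rho_\mathcal{C}'$ is built only from the trivial, sign, and $N_1(\chi_1)$ representations of $\mathrm{SL}(2,\mathbb{Z}/2\mathbb{Z})$ permitted by Lemmas \ref{one} and \ref{two}, forcing any non-invertible simple to produce a twist outside $\{1,-1\}$.

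Once $\mathcal{C}\simeq\mathcal{C}(G,q)$ is pointed, $G$ has exponent $2$, so $G\cong C_2^{2m}$, and braided equivalences of metric groups correspond to isometries of the quadratic form \cite[Section 8.4]{tcat}. Nondegenerate $\{1,-1\}$-valued quadratic forms on $C_2^{2m}$ are classified over $\mathbb{F}_2$ by their Arf invariant, giving exactly two isometry classes whose Gauss sums are $+2^m$ and $-2^m$ and which therefore realize precisely $\xi=1$ and $\xi=-1$. This assigns to each admissible pair $(m,\xi)$ a single braided-equivalence class, completing the proof.
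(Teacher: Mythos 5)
Your first two paragraphs track the paper's own argument: reality of the twists gives $\tau_1=\tau_{-1}$, hence $D=\tau_1^2$ is a perfect square, rationality of the $S$-matrix over $\mathbb{Q}(\theta_X)=\mathbb{Q}$ gives integrality, and the Cauchy-type theorem \cite[Theorem 3.9]{MR3486174} (or your nilpotent factorization) yields $D=2^{2m}$ and $\xi\in\{\pm1\}$; that part is fine. The divergence, and the problem, is in the uniqueness step. You reduce uniqueness to the claim that $\mathcal{C}$ is pointed and then classify $\{\pm1\}$-valued quadratic forms on $C_2^{2m}$ by the Arf invariant; that endgame would be sound, but the pointedness claim is precisely the hard content of the theorem (it is essentially the main result of \cite{MR4202289}), and the justification you sketch does not establish it. The squared-trace constraint \eqref{zero2trace} is a statement about the induced objects $I(X)$ in the double of a spherical fusion category; it is not available for an abstract modular $\mathcal{C}$ until you already know $\mathcal{C}$ is (a factor of) such a double, which is what is at stake. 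Likewise, knowing that $\rho'_\mathcal{C}$ decomposes into copies of the trivial, sign, and $N_1(\chi_1)$ representations of $\mathrm{SL}(2,\mathbb{Z}/2\mathbb{Z})$ constrains the multiplicities of $t$-eigenvalues, not the dimensions of individual simple objects, so it cannot by itself ``force any non-invertible simple to produce a twist outside $\{1,-1\}$''; the category of Figure \ref{fig:twoo} shows that integral nilpotent modular categories of $2$-power dimension with very few twists can perfectly well have two-dimensional simples, so some genuine additional argument specific to $N=2$ is required. Finally, passing to the Witt-minimal representative cannot be upgraded formally: Witt triviality does not imply pointedness (e.g.\ $\mathcal{Z}(\mathrm{Vec}_G)$ for nonabelian $G$ is Witt trivial and non-pointed).

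The paper avoids pointedness altogether: having shown $\mathcal{C}$ is integral of dimension $2^{2m}$ and nilpotent \cite[Theorem 8.28]{ENO}, it applies the structure theorem \cite[Theorem 1.3]{drinfeld2007grouptheoretical} to write $\mathcal{C}$ (when $\xi=1$), or $\mathcal{V}\boxtimes\mathcal{Z}(\mathrm{Vec}_H^\omega)$ (when $\xi=-1$), as a twisted double of a finite $2$-group, and then uses the computation of Frobenius--Schur exponents of twisted doubles (\cite[Theorem 9.2]{MR2313527} and \cite[Theorem 4.7]{MR2333187}) to force the group to be elementary abelian and $\omega$ trivial; existence and uniqueness for each pair $(m,\xi)$ then follow at once from this normal form. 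To salvage your route you would need an actual proof that $N=2$ forces every simple object to be invertible, or you should replace that step with the twisted-double structure theorem as the paper does.
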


\begin{proof}
Assume $\mathcal{C}$ is a modular fusion category such that $\theta_X\in\{-1,1\}$ for all $X\in\mathcal{O}(\mathcal{C})$.  The $S$-matrix entries, and in particular the dimensions of simple objects, are contained in $\mathbb{Q}(\pm1)=\mathbb{Q}$ \cite[Theorem 2.7]{MR3486174}.  Moreover, it is evident that $\tau_1=\tau_{-1}$ hence $D=\tau_1\tau_{-1}$ is a perfect square integer and we conclude $D=2^{2m}$ for some $m\in\mathbb{Z}_{\geq1}$ by \cite[Theorem 3.9]{MR3486174}.  Now since $\mathcal{C}$ is nilpotent \cite[Theorem 8.28]{ENO} and $\xi\in\{-1,1\}$, then when $\xi=1$, $\mathcal{C}\simeq\mathcal{Z}(\mathrm{Vec}_G^\omega)$ is a braided equivalence for a finite $2$-group $G$ and $\omega\in H^3(G,\mathbb{C}^\times)$ \cite[Theorem 1.3]{drinfeld2007grouptheoretical}.  Furthermore, $N=2$ only if $G$ has exponent $2$ \cite[Theorem 9.2]{MR2313527}, i.e.\ is an elementary abelian $2$-group.  Otherwise $\xi=-1$, and $\mathcal{C}\simeq\mathcal{V}\boxtimes\mathcal{Z}(\mathrm{Vec}_H^\omega)$ for $H\cong C_2^{\ell-1}$, $\omega\in H^3(H,\mathbb{C}^\times)$, and $\ell\geq1$ where $\mathcal{V}$ is the rank 4 metric group with $\theta_X=-1$ for all nontrivial invertible $X$ and $C_2^0$ is the trivial group.  Note that $N=2$ for $\mathcal{Z}(\mathrm{Vec}_{C_2^m}^\omega)$ for any $m\in\mathbb{Z}_{\geq1}$ if and only if $\omega$ is trivial \cite[Theorem 4.7]{MR2333187} and so our proof is complete.
\end{proof}

\begin{note}\label{yilongnote}
It was observed in \cite[Theorem 4.2]{MR4202289} that there are exactly two prime braided equivalence classes of this type: $\mathcal{Z}(\mathrm{Vec}_{C_2})$ of rank $4$, with twists $1,1,1,-1$ and the category $\mathcal{V}$ of rank $4$ with twists $1,-1,-1,-1$.  Both categories are pointed and will be referred to in the future with the standard notation for metric groups $\mathcal{C}(G,q)$ where $G$ is a finite group and $q:G\to\mathbb{C}^\times$ a nondegenerate quadratic form \cite[Section 8.4]{tcat}.
\end{note}

\begin{corollary}
Let $\mathcal{C}$ be a spherical fusion category.  If $\mathrm{FSexp}(\mathcal{C})=2$, then $\mathcal{C}\simeq\mathrm{Vec}_G$ is a tensor equivalence for an elementary abelian $2$-group $G$.
\end{corollary}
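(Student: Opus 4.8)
The plan is to pass to the Drinfeld center $\mathcal{Z}(\mathcal{C})$, apply Theorem \ref{prop2}, and then descend the conclusion back to $\mathcal{C}$. First I would record the two properties of the center needed to invoke Theorem \ref{prop2}: the double $\mathcal{Z}(\mathcal{C})$ is a modular fusion category with $\mathrm{FSexp}(\mathcal{Z}(\mathcal{C}))=\mathrm{FSexp}(\mathcal{C})=2$ by \cite{MR2313527}, and it has trivial central charge, so $\xi=1$ in the notation of Section \ref{seccong}. Thus $\mathcal{Z}(\mathcal{C})$ satisfies the hypotheses of Theorem \ref{prop2} with $\xi=1$, whence $\mathcal{Z}(\mathcal{C})\simeq\mathcal{Z}(\mathrm{Vec}_{G})$ as a braided equivalence for an elementary abelian $2$-group $G$, say $|G|=2^m$. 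In particular $\mathcal{Z}(\mathcal{C})$ is \emph{pointed}, being the metric group attached to $G\times\hat G\cong C_2^{2m}$, and $\dim\mathcal{Z}(\mathcal{C})=2^{2m}$.

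The second step is to deduce that $\mathcal{C}$ itself is pointed from the fact that its center is pointed. Here I would use that the forgetful functor $F:\mathcal{Z}(\mathcal{C})\to\mathcal{C}$ is a dominant tensor functor and therefore preserves Frobenius--Perron dimension. Every simple object of $\mathcal{Z}(\mathcal{C})$ is invertible, so its image under $F$ has $\mathrm{FPdim}=1$; since $F$ is dominant, each simple $X\in\mathcal{O}(\mathcal{C})$ occurs as a subobject of some $F(Y)$ with $Y\in\mathcal{O}(\mathcal{Z}(\mathcal{C}))$, which forces $\mathrm{FPdim}(X)=1$. Hence $\mathcal{C}$ is pointed, so $\mathcal{C}\simeq\mathrm{Vec}_H^{\omega}$ for a finite group $H$ of order $2^m$ and some $\omega\in H^3(H,\mathbb{C}^\times)$.

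It remains to identify $H$ and $\omega$. Since $\mathcal{Z}(\mathcal{C})=\mathcal{Z}(\mathrm{Vec}_H^{\omega})$ is pointed, $H$ must be abelian, as a nonabelian $H$ contributes simple objects of dimension greater than $1$ to the double (indexed by conjugacy classes of size greater than $1$). The metric group $\mathcal{Z}(\mathrm{Vec}_H^{\omega})$ then has underlying group $C_2^{2m}$; because an element of order $4$ in $H$, or a nontrivial associator $\omega$, would produce twists of order greater than $2$ (equivalently, elements of order greater than $2$ in the underlying group of the double), the constraint $\mathrm{FSexp}(\mathcal{C})=2$ forces $H$ to be elementary abelian and $\omega$ to be trivial. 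This is exactly the rigidity recorded for $\mathcal{Z}(\mathrm{Vec}_{C_2^m}^{\omega})$ in Note \ref{yilongnote} via \cite[Theorem 4.7]{MR2333187}. Therefore $\mathcal{C}\simeq\mathrm{Vec}_G$ with $G=H$ an elementary abelian $2$-group, as claimed.

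The crux of the argument is the descent from the center back to $\mathcal{C}$: first the implication that a pointed center forces $\mathcal{C}$ to be pointed, and then the elimination of a nontrivial associator. The former rests on dominance and dimension-preservation of the forgetful functor and is essentially formal; the latter requires the sharper input that $N=2$ for a twisted double of an abelian $2$-group occurs only in the untwisted, elementary abelian case. I expect this cocycle rigidity, rather than the structural reduction, to be the most delicate point, and it is where the external computation \cite[Theorem 4.7]{MR2333187} is genuinely needed.
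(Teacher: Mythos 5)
Your proposal is correct and follows essentially the same route as the paper: pass to the center, apply Theorem \ref{prop2} (using triviality of the central charge and invariance of the Frobenius--Schur exponent under taking the double), use the forgetful functor to pull pointedness back to $\mathcal{C}$, and eliminate the cocycle via \cite[Theorem 4.7]{MR2333187}. The only cosmetic difference is that the paper deduces pointedness by observing $F(X)^{\otimes 2}=F(X^{\otimes 2})=\mathbbm{1}$ for simple $X$ in the center, whereas you argue via dominance and preservation of Frobenius--Perron dimension; both are valid and essentially equivalent.
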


\begin{proof}
As $\mathrm{FSexp}(\mathcal{C})=\mathrm{FSexp}(\mathcal{Z}(\mathcal{C}))=N$ \cite[Corollary 7.8]{MR2313527}, then $\mathcal{Z}(\mathcal{C})\simeq\mathcal{Z}(\mathrm{Vec}_G)$ is a braided equivalence for an elementary abelian $2$-group by Theorem \ref{prop2}.  The forgetful functor $F:\mathcal{Z}(\mathcal{C})\to\mathcal{C}$ is monoidal, hence for all simple $X\in\mathcal{Z}(\mathcal{C})$, $F(X)^{\otimes2}=F(X^{\otimes2})=\mathbbm{1}$.  Thus $\mathcal{C}\simeq\mathrm{Vec}_G^\omega$ is a tensor equivalence for an elementary abelian $2$-group and $\omega$ is trivial by the reasoning in the end of the proof of Theorem \ref{prop2}.
\end{proof}


\section{Fixed number of distinct twists}\label{sec:tootwist}

Assume $\mathcal{C}$ is a modular fusion category and all simple objects have the same twist $\theta$.  Then we compute using \cite[Proposition 8.15.4]{tcat},
\begin{equation}\label{eq:key}
D=\tau_1\tau_{-1}=D\theta D\theta^{-1}=D^2.
\end{equation}
Therefore $D=1$ and moreover $\mathcal{C}\simeq\mathrm{Vec}$ \cite[Theorem 2.3]{ENO}.  We devote the remainder of this section to classifying modular fusion categories with either $2$ or $3$ distinct twists.  The number-theoretical implications of the relations analogous to Equation (\ref{eq:key}) are the key to obtaining finiteness results with a higher quantity of fixed twists.


\subsection{Two twists}

The entirety of this subsection will prove the following classification.  For brevity, we will use the Lie-theoretic notation for the modular fusion categories of rank $2$.  The interested reader may reference \cite{MR4079742} for a digestible overview of this ubiquitous family of categories.

\begin{theorem}\label{th:twotwists}
Let $\mathcal{C}$ be a modular fusion category with exactly two twists: $1$ and $\theta$.  Then $N=2$ or $\mathcal{C}$ is braided equivalent to one of the categories in Figure \ref{fig:two}.
\end{theorem}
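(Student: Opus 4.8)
The plan is to extract from the single identity $D=\tau_1\tau_{-1}$ a relation that, together with the congruence representation, simultaneously restricts $\theta$ to finitely many roots of unity and bounds the global dimension, and then to match the survivors against the explicit list.

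\emph{Setting up the key relation.} First I would partition $\mathcal{O}(\mathcal{C})=\mathcal{C}_1\sqcup\mathcal{C}_\theta$ and write $D=D_1+D_\theta$, noting $D_1\geq1$ since $\mathbbm{1}\in\mathcal{C}_1$ and $D_\theta>0$ since $\theta$ is a genuine second twist. Because the (unnormalized) twists take only the values $1$ and $\theta$, the Gauss sums are $\tau_{\pm1}=D_1+\theta^{\pm1}D_\theta$, so $D=\tau_1\tau_{-1}$ \cite[Proposition 8.15.4]{tcat} becomes
\[
D=D_1^2+(\theta+\theta^{-1})D_1D_\theta+D_\theta^2,\qquad\text{equivalently}\qquad (2-\theta-\theta^{-1})D_1D_\theta=D(D-1).
\]
This is the analogue of Equation (\ref{eq:key}) that drives everything.

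\emph{Restricting $\theta$ via $\rho_\mathcal{C}'$.} Here $N=\mathrm{ord}(\theta)$, and the normalized $t$-spectrum is $\{t_\mathbbm{1},\theta\,t_\mathbbm{1}\}$: exactly two values whose ratio is $\theta$. I would decompose $\rho_\mathcal{C}'$ into irreducibles using Lemma \ref{one}. Exactly as in the proof of Proposition \ref{copropprime}, a decomposition into one-dimensional summands alone would force, via \cite[Lemma 3.18]{MR3632091} and \cite[Lemma 5.2.2]{MR4834521}, a single $t$-eigenvalue, contradicting the two present; hence $\rho_\mathcal{C}'$ has a two-dimensional irreducible summand. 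Being nondegenerate, this summand carries both $t$-eigenvalues, so $\theta$ is exactly the ratio of its two eigenvalues; since that ratio is unaffected by tensoring with a one-dimensional representation of coprime level, reading the ratios off Figure \ref{fig:twots} shows $\mathrm{ord}(\theta)\in\{2,3,4,5\}$. Discarding $\theta=-1$ (the excluded $N=2$ case) leaves $N\in\{3,4,5\}$.

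\emph{Bounding $D$ and finiteness.} For $\theta\neq-1$ one has $0<2-\theta-\theta^{-1}<4$, so $D_1D_\theta\leq D^2/4$ turns the key relation into $D\leq 4/(2+\theta+\theta^{-1})$. The same estimate applies to every Galois conjugate: categorical dimensions are totally real, so each $\sigma(D_\zeta)=\sum_{X\in\mathcal{C}_\zeta}\sigma(\dim X)^2$ is again nonnegative with $\sigma(D_1)\geq1$, and $\sigma(\theta)\neq-1$ (as $-1\notin\langle\theta\rangle$ for $N$ odd and $\sigma(\theta)=\pm i$ for $N=4$), so the conjugate relation yields $\sigma(D)\leq 4/(2+\sigma(\theta)+\sigma(\theta)^{-1})$ as well. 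Hence $D$ is a totally positive algebraic integer of bounded degree all of whose archimedean conjugates are bounded, so it ranges over a finite set; its largest conjugate $\mathrm{FPdim}(\mathcal{C})$ is bounded, the rank is bounded, and only finitely many $\mathcal{C}$ occur.

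\emph{Identification and the main obstacle.} It remains to enumerate the survivors. For $N=4$ the bound gives $D\leq2$, forcing all objects invertible and hence the two semionic metric groups $\mathcal{C}(C_2,q)$. For $N=3$ the bound $D\leq4$ should leave only the pointed rank-$3$ categories $\mathcal{C}(C_3,q)$ once the higher-dimensional configurations (including the inconsistent rank-$2$ option with $\dim(X)^2=2$) are excluded. For $N=5$ the paired conjugate bounds confine $D$ to finitely many totally positive integers of $\mathbb{Z}[\tfrac{1+\sqrt5}{2}]$, from which the Fibonacci and Yang--Lee categories emerge. I expect this last step to be the main obstacle: the representation-theoretic and Gauss-sum inputs only \emph{bound} the invariants, whereas ruling out the spurious configurations permitted by the crude dimension bound, and matching the genuine ones to the explicit entries of Figure \ref{fig:two}, requires the finer consistency of the modular data, namely the fusion rules together with the central-charge and reality constraints on $\tau_1$ and the Galois action on $\mathcal{O}(\mathcal{C})$.
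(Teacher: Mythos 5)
Your derivation of the key quadratic relation and your overall strategy (restrict $N$, bound $D$ conjugate by conjugate, enumerate) track the paper, but your route to $N\in\{2,3,4,5\}$ is genuinely different and cleaner. The paper bounds the order of the $t$-eigenvalues via the square Galois action (getting $N\mid 2^4\cdot3\cdot5$) and then disposes of $N\in\{6,12\}$, $N\in\{8,16,24,40,48,80,120\}$ and $N\in\{10,15,20,30,60\}$ one at a time through discriminant estimates on the quadratic (\ref{xeq}); you instead note that $\rho_\mathcal{C}'$ must contain a two-dimensional irreducible summand whose multiplicity-free $t$-spectrum is exactly that of $\mathcal{C}$, so $\theta$ is the ratio of the two eigenvalues of an entry of Figure \ref{fig:twots}, and these ratios all have order $2$, $3$, $4$ or $5$. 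That step is correct (it is the same mechanism the paper deploys in Proposition \ref{copropprime} and Lemma \ref{atemf}) and it collapses most of the paper's case analysis. The conjugate-by-conjugate bound $\sigma(D)\le 4/(2+\sigma(\theta)+\sigma(\theta)^{-1})$ is also valid and does give finiteness.

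The genuine gap is in the final identification, and it is not just bookkeeping: the archimedean bounds you derive do not pin down $D$. For $N=5$ with $\theta=\zeta_5$ your inequalities read $1<D\le 6-2\sqrt5\approx1.528$ and $1\le\sigma(D)\le 6+2\sqrt5$, and besides $(5-\sqrt5)/2$ the totally positive algebraic integers $(9-3\sqrt5)/2\approx1.146$ and $6-2\sqrt5$ itself satisfy every condition you impose; likewise for $N=3$ the value $D=4$ (with $D_1=D_\theta=2$) survives your bound $D\le4$ together with integrality. The paper eliminates these with two arithmetic inputs you never invoke: $D$ is an algebraic $d$-number \cite[Corollary 1.4]{codegrees}, and the Cauchy-type theorem \cite[Theorem 3.9]{MR3486174} forces the norm of $D$ to be a power of $N$, which kills $(9-3\sqrt5)/2$ and $6-2\sqrt5$ (norms $9$ and $16$) and $D=4$ when $N=3$. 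The tools you name for the endgame --- fusion rules, reality of $\tau_1$, the Galois action on $\mathcal{O}(\mathcal{C})$ --- are indeed what matches the surviving dimensions to the entries of Figure \ref{fig:two} via \cite[Example 5.1.2]{ostrikremarks}, but they are not what closes the enumeration of candidate dimensions; without the $d$-number and norm constraints the argument stalls exactly where you predicted it would.
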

\begin{figure}[H]
\centering
\begin{equation*}
\begin{array}{|c|ccccc|}
\hline\mathcal{C} & \#/\simeq &\textnormal{Indexed by} & \mathrm{FPdim}(\mathcal{C}) & \theta  & N \\\hline\hline
\mathcal{C}(C_2,q) & 2 & q:C_2\to\mathbb{C}^\times & 2 & q(1) & 4 \\
\mathcal{C}(C_3,q) & 2 &q:C_3\to\mathbb{C}^\times &  3 & q(1) & 3 \\
\mathcal{C}(\mathfrak{sl}_2,5,q)_\mathrm{ad} & 4 & \textnormal{primitive } q^{10}=1 & \frac{1}{2}(5+\sqrt{5}) & q^4 & 5\\
\hline
\end{array}
\end{equation*}
    \caption{Modular fusion categories with two distinct twists and $N\neq2$ up to braided equivalence}%
    \label{fig:two}%
\end{figure}

\noindent To justify Theorem \ref{th:twotwists}, define $\mathcal{C}_\zeta:=\{X\in\mathcal{O}(\mathcal{C}):\theta_X=\zeta\}$ and $D_\zeta:=\sum_{X\in\mathcal{C}_\zeta}\dim(X)^2$.  Then one computes from \cite[Proposition 8.15.4]{tcat} and the definition of $\tau_{\pm1}$ \cite[Definition 3.1]{MR3997136},
\begin{align}
D_1+D_\theta=D=\tau_1\tau_{-1}&=(D_1+D_\theta\theta)(D_1+D_\theta\theta^{-1}) =D_1^2+D_\theta^2+D_1D_\theta(\theta+\theta^{-1}).\label{six}
\end{align}
By Galois conjugacy of simple objects (refer to Appendix \ref{appa}), the $t$-eigenvalues of $\mathcal{C}$ are roots of unity of order dividing $2^4\cdot3\cdot5$ by inspecting prime powers $p^k$ with $\varphi(p^k)=(1/2)p^{k-1}(p-1)\leq2$, hence the same is true for $N$ \cite[Theorem 7.1]{MR2725181}.  If $2^4\cdot5$ divides $N$, the $t$-eigenvalues have at least $4$ square Galois conjugates which cannot occur by assumption.  So we divide our argument into the following distinct cases based on the type of argument required: $8$ divides $N$ ($N\in\{8,16,24,40,48,80,120\}$), $5$ divides $N$ and $8$ does not ($N\in\{5,10,15,20,30,60\}$), or neither $5$ nor $8$ divides $N$ ($N\in\{3,4,6,12\}$).  Up to Galois conjugacy of modular fusion categories \cite[Section 4.3]{davidovich2013arithmetic}, we may assume that $\theta$ is the primitive root $\theta=\exp(2\pi i/N):=\zeta_N$, hence $\theta+\theta^{-1}=2\cos(2\pi/N)$.  For each possibility, we have a quadratic relation between $D_1$ and $D_\theta$, and we know both $D_1,D_\theta>0$ by definition.  Specifically, by Equation (\ref{six}),
\begin{align}
&&D_1+D_\theta&=D_1^2+D_\theta^2+D_1D_\theta(\theta+\theta^{-1}) \\
\Rightarrow&&0&=D_1^2+(2\cos(2\pi/N)D_\theta-1)D_1+D_\theta(D_\theta-1)\label{xeq} \\
\Rightarrow&&D_1&=\dfrac{1}{2}(1-2\cos(2\pi/N)D_\theta\pm\sqrt{(2\cos(2\pi/N)D_\theta-1)^2-4D_\theta(D_\theta-1)}).\label{eqx}
\end{align}

\subsubsection{$N\in\{3,4,6,12\}$}

\par If $N=12$, then for $D_1$ to be real in Equation (\ref{eqx}), the discriminant of the quadratic in (\ref{xeq}) must be nonnegative, i.e.\ $-D_\theta^2-2(\sqrt{3}-2)D_\theta+1\geq0$.  This conservatively forces $0<D_\theta<1.304$, and $D_1<0$ in this range, which cannot occur since $D_1\geq1$ \cite[Remark 2.5]{ENO}.  We consider the remaining $N$ using the same reasoning, with the fact that if $N\in\{2,3,4,6\}$, then $\mathcal{C}$ is integral \cite[Theorem 7.1]{MR2725181}.  For $N=3$, $D_\theta\in\{1,2\}$ for the discriminant of the quadratic in (\ref{xeq}) to be nonnegative, and when $N\in\{4,6\}$, $D_\theta=1$.  We compute $D_1=1$ when $N=4$ and $D_\theta=1$.  When $N=6$, there are no positive integer solutions for $D_1$.  Thus the only modular fusion categories with a unique nontrivial twist $\pm\zeta_4$ are the metric groups of rank $2$ by \cite[Example 5.1.2(i)]{ostrikremarks}.  When $N=3$ and $D_\theta=1$, then $D_1=2$.  Any such category is pointed \cite[Example 5.1.2(ii)]{ostrikremarks}, but all such modular fusion categories have $D_1=1$.  When $N=3$ and $D_\theta=2$, then $D_1=2$ or $D_1=1$.  The former is impossible because $3$ is coprime to $D=4$ \cite[Theorem 3.9]{MR3486174}.  The latter has $D=3$ and is therefore a metric group of rank $3$.

\subsubsection{$N\in\{8,16,24,40,48,80,120\}$} \label{sub:3}

\par This is the shortest argument as no such categories exist.  When $N\in\{8,16,24,40,48,80,120\}$ then for $D_1$ to be real, we must have a nonnegative discriminant for the quadratic in Equation (\ref{xeq}), i.e.\
\begin{equation}
(2\cos(2\pi/N)D_\theta-1)^2-4D_\theta(D_\theta-1)\geq0.\label{ten}
\end{equation}
For $N\in\{8,16,24,40,48,80,120\}$ one may verify that $0<D_\theta<7$ in any case, and maximizing $D_1$ over this interval when $D_1\in\mathbb{R}$ for each $N$ shows $D_1<1$.  Moreover no such categories exist by \cite[Remark 2.5]{ENO}.

\subsubsection{$N\in\{5,10,15,20,30,60\}$} 

When $N>5$, an identical argument is valid as in Subsection \ref{sub:3} above.  The case when $N=5$ mimics the argument in Subsection \ref{sub:3}, but a more careful analysis is needed as examples do exist.

\par When $N=5$, then for the inequality in (\ref{ten}) to hold, $0<D_\theta<1.05$ conservatively, and for $D_1>0$, we must have $D_1+D_\theta<2$.  Note that the maximal totally real subfield of $\mathbb{Q}(\zeta_5)$ is $\mathbb{Q}(\sqrt{5})$.  Thus $D$ is an algebraic $d$-number \cite[Corollary 1.4]{codegrees} in $\mathbb{Q}(\sqrt{5})$ which is totally greater than or equal to $1$ \cite[Proposition 7.21.14]{tcat}, but less than $2$.  There are exactly two such numbers: $1$ and $(1/2)(5-\sqrt{5})$ (see Appendix \ref{appa}).  Since $\mathcal{C}$ is not trivial, we conclude that $D=(1/2)(5-\sqrt{5})$ and moreover $\mathcal{C}$ is a Fibonacci modular fusion category \cite[Example 5.1.2(iv)]{ostrikremarks}.  There are four braided equivalence classes of such categories: two equivalence classes of fusion categories each with two distinct braidings \cite{ostrik}.


\subsubsection{Coprime twists}

\par Having completed the proof of Theorem \ref{th:twotwists}, in the final subsection of Section \ref{sec:tootwist}, we note that Theorem \ref{th:twotwists} can be used to generalize Proposition \ref{copropprime} to coprime twists.

\begin{theorem}\label{thmthreetwist}
Let $\mathcal{C}$ be a nontrivial modular fusion category.  If all twists of $\mathcal{C}$ have coprime order, then $\mathcal{C}$ has exactly two distinct twists.  All such modular fusion categories are described in Theorem \ref{th:twotwists}. 
\end{theorem}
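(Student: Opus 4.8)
The plan is to show that coprimality forces at most two distinct twists; since a nontrivial $\mathcal{C}$ has at least two distinct twists by Equation (\ref{eq:key}), this yields exactly two, and Theorem \ref{th:twotwists} then supplies the classification. Note that every category in Figure \ref{fig:two}, as well as the $N=2$ categories, has twists $1$ and a single $\theta$, whose orders $1$ and $\mathrm{ord}(\theta)$ are automatically coprime, so the resulting list is precisely the coprime-twist categories.

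The engine of the argument is a $\gamma$-free form of the Galois action. Writing $t_X=\gamma^{-1}\theta_X$, the cube root $\gamma$ cancels in ratios, so the identity $t_{\hat\sigma(X)}=\sigma^2(t_X)$ from the proof of Proposition \ref{copropprime} descends to
\begin{equation*}
\theta_{\hat\sigma(X)}\theta_{\hat\sigma(Y)}^{-1}=\sigma^2(\theta_X\theta_Y^{-1})\qquad(\sigma\in\mathrm{Gal}(\overline{\mathbb{Q}}/\mathbb{Q}),\ X,Y\in\mathcal{O}(\mathcal{C})).
\end{equation*}
Hence the set $R$ of all twist ratios $\theta_X\theta_Y^{-1}$ is stable under squared Galois conjugation. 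Denote the distinct twists by $1,\theta_1,\dots,\theta_{k-1}$ with pairwise coprime orders $1,m_1,\dots,m_{k-1}$. I would first observe that a primitive $m_i$-th root of unity can lie in $R$ only as $\theta_i^{\pm1}$: any ratio of two twists has order equal to the product of their (pairwise coprime) orders, and by coprimality this product equals $m_i$ only when one of the two twists is trivial and the other is $\theta_i$. Consequently the squared-Galois orbit of $\theta_i$ is contained in $\{\theta_i,\theta_i^{-1}\}$, so $a^2\equiv\pm1\pmod{m_i}$ for every unit $a$; the same reasoning applied to the order-$m_im_j$ ratio $\theta_i\theta_j^{-1}$ gives $a^2\equiv\pm1\pmod{m_im_j}$.

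Let $M$ be the set of positive integers all of whose units square to $\pm1$; a direct check gives $M\setminus\{1\}=\{2,3,4,5,6,8,10,12,24\}$. The two congruence conditions above say that every $m_i$ and every product $m_im_j$ lies in $M$. Enumerating the pairwise coprime members of $M$ whose pairwise products also lie in $M$, one finds no admissible triple, so $k-1\le 2$; and when $k-1=2$ the two nontrivial orders form one of $\{2,3\},\{2,5\},\{3,4\},\{3,8\}$. It therefore remains only to eliminate the four potential three-twist configurations, with twist sets $\{1,-1,\omega\}$ ($N=6$), $\{1,-1,\zeta_5^a\}$ ($N=10$), $\{1,\omega,\zeta_4^a\}$ ($N=12$), and $\{1,\omega,\zeta_8^a\}$ ($N=24$).

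This last elimination is the main obstacle, and it is genuinely harder than the corresponding step of Proposition \ref{copropprime}: there the hypothesis forced $\xi=\pm1$ and the contradiction came from the non-reality of the Gauss sum $\tau_1=\sum_\zeta D_\zeta\zeta$, whereas here $\xi$ need not be real (for $\{1,-1,\omega\}$ one already finds an arithmetically consistent Gauss sum with $\xi=\pm i$), so reality alone does not suffice. Instead I would treat each configuration by the method of the proof of Theorem \ref{th:twotwists}: pin down $\gamma$ from integrality at $N=6$ \cite[Theorem 7.1]{MR2725181} together with $\xi^8=1$ \cite[Proposition 2.6]{MR4836055}, and from the maximal real subfield of $\mathbb{Q}(\zeta_N)$ for $N\in\{10,12,24\}$; then compute the resulting three-element $t$-spectrum and match it against the irreducible summands catalogued in Lemmas \ref{one} and \ref{two}, using the level-coprimality of admissible tensor factors together with \cite[Lemma 3.18]{MR3632091} and \cite[Lemma 5.2.2]{MR4834521} to rule out every compatible decomposition. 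An equivalent route, once the three-twist classification is available, is simply to inspect that no category with three distinct twists (for instance the $\{1,\zeta_4,\zeta_4^3\}$ example of Figure \ref{fig:twoo}) has pairwise coprime twist orders.
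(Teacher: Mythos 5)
Your reduction to at most three distinct twists is correct and takes a genuinely different route from the paper's. The paper first disposes of the case where $\gamma$ is moved by squared Galois conjugation (recovering the Fibonacci categories), and in the remaining case uses $\theta_{\hat{\sigma}(X)}=\sigma^2(\theta_X)$ directly: since $\sigma^2(\theta_X)$ is itself a twist of the same order as $\theta_X$, coprimality forces $\sigma^2(\theta_X)=\theta_X$, hence every twist order divides $24$ and the only candidate pairs of nontrivial orders are $\{2,3\},\{3,4\},\{3,8\}$. Your $\gamma$-free ratio identity buys you a cleaner start (no case split on $\gamma$, the Fibonacci case just appears as a two-twist category), but the price is the weaker conclusion $a^2\equiv\pm1\pmod{m_i}$, which admits the extra pair $\{2,5\}$. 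That case can in fact be killed inside your own framework --- if $\sigma^2$ inverts $\zeta_5$ then $\hat{\sigma}(\mathbbm{1})$ must carry twist $\zeta_5^a$, whence the image under $\hat{\sigma}$ of an object of twist $-1$ produces a ratio $-\zeta_5^{\pm a}$ of order $10$, which is not in $R$ --- but you do not make this argument, and as written you are left with four configurations to eliminate rather than three.

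The genuine gap is in the elimination of the surviving three-twist configurations, which you correctly identify as the main obstacle but whose proposed resolution would fail. Matching the $t$-spectrum against Lemmas \ref{one} and \ref{two} together with \cite[Lemma 3.18]{MR3632091} and \cite[Lemma 5.2.2]{MR4834521} does \emph{not} rule out $\{1,-1,\omega\}$: the paper's own Figure \ref{fig:twelvetoo} exhibits decompositions passing every one of these tests and realizing exactly the twists $1,-1,\zeta_3$ (for instance $N_2(\chi)\oplus N_1(\chi)\otimes C_5$ with $t$-spectrum $\{\zeta_4,\zeta_4^3,\zeta_{12}\}$ and $\gamma=\zeta_4$), and the Gauss sum $\tau_1=D_1-D_{-1}+D_\omega\omega$ is compatible with $\xi=\pm\zeta_4$, as you note. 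What actually closes these cases in the paper is categorical input that your sketch omits: integrality gives solvability, hence Witt equivalence to a completely anisotropic pointed category $\mathcal{P}_2\boxtimes\mathcal{P}_3$; coprimality of the inherited twists forces one factor to be trivial; the handful of surviving pointed representatives pin down $\xi$ (hence $\gamma$ and the exact $t$-spectrum), and only then does the congruence-representation bookkeeping, or a central-charge mismatch, yield a contradiction. This de-equivariantization step is the missing idea. Your fallback of citing the three-twist classification is also unavailable here: Section \ref{6sec} of the proof of Theorem \ref{th:thre} invokes Theorem \ref{thmthreetwist}, so that route is circular in the paper's logical order.
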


\begin{proof}
Let $\gamma$ be a cube root of $\xi$.  Note that $t_{\hat{\sigma}(\mathbbm{1})}=\sigma^2(\gamma^{-1})$ for all $\sigma\in\mathrm{Gal}(\overline{\mathbb{Q}}/\mathbb{Q})$, hence $\theta_{\hat{\sigma}(\mathbbm{1})}=\sigma^2(\gamma^{-1})\gamma$.  If $\gamma^{-1}$ has two or more distinct nontrivial square Galois conjugates, then the corresponding orders of the twists of the simple objects in the Galois orbit of $\mathbbm{1}$ will not be coprime.  In the case $\gamma^{-1}$ has exactly one nontrivial square Galois conjugate, all simple objects of $\mathcal{C}$ must have these two $t$-eigenvalues.  The Fibonacci modular fusion categories $\mathcal{C}(\mathfrak{sl}_2,5,q)_\mathrm{ad}$ are the unique modular fusion categories with this property by Theorem \ref{th:twotwists}.

\par Otherwise, $\sigma^2(\gamma)=\gamma$ for any $\sigma\in\mathrm{Gal}(\overline{\mathbb{Q}}/\mathbb{Q})$, thus
\begin{equation}\theta_{\hat{\sigma}(X)}\gamma^{-1}=t_{\hat{\sigma}(X)}=\sigma^2(t_X)=\sigma^2(\theta_X\gamma^{-1})=\sigma^2(\theta_X)\gamma^{-1}.\end{equation}
Therefore $\theta_{\hat{\sigma}(X)}=\sigma^{2}(\theta_X)$.  Moreover $\theta_X^{24}=1$ for all simple $X\in\mathcal{C}$, and thus by the coprime assumption, as in the proof of Proposition \ref{copropprime}, there are at most three distinct twists including $\theta_\mathbbm{1}=1$.  If there are only two distinct twists, these are described by Theorem \ref{th:twotwists}.  It remains to consider the case there are three distinct twists, say $1$, $\theta_X$ and $\theta_Y$, where $\theta_X^8=1$ and $\theta_Y^3=1$.  In particular, $\xi$ is nontrivial.  We may assume by Galois conjugacy of modular fusion categories that $\theta_Y=\zeta_3$, and $\theta_X\in\{-1,\zeta_4,\zeta_8\}$.  Note that $\mathcal{C}$ is integral and solvable by \cite[Theorem 3.9]{MR3486174} and \cite[Theorem 1.6]{solvable}, and therefore Witt equivalent to a pointed modular fusion category \cite[Theorem 1.1]{MR3770935}, i.e.\ there exists a maximal connected \'etale algebra $A$ such that $\mathcal{C}_A^0$ is pointed, completely anisotropic, and nontrivial since $\xi\neq1$.  This forces $\mathcal{C}_A^0\simeq\mathcal{P}_2\boxtimes\mathcal{P}_3$ to be a braided equivalence where $\mathcal{P}_2$ is pointed with dimension a (possibly trivial) power of $2$ and $\mathcal{P}_3$ pointed with dimension a (possibly trivial) power of $3$.  The twists of $\mathcal{C}_A^0$ are inherited from $\mathcal{C}$ and are coprime, thus $\mathcal{P}_2$ or $\mathcal{P}_3$ is trivial.

\par Assume first that only $\mathcal{P}_2$ is trivial.  Then $\mathcal{P}_3\simeq\mathcal{C}(C_3,q)$ where $q(1)=\zeta_3$ by our previous assumptions.  This implies $\xi$, and hence $\gamma$, is $\zeta_4$, thus $t_\mathbbm{1}=\zeta_4^3$, $t_Y=\zeta_{12}$, and $\theta_X\in\{1,\zeta_4,\zeta_8^7\}$.  Therefore any irreducible summand of $\rho_\mathcal{C}'$ of level $12$ would be one-dimensional.  This violates the non-empty intersection criterion \cite[Lemma 3.18]{MR3486174}, so no such categories exist.

\par Secondly, if only $\mathcal{P}_3$ is trivial, $\mathcal{C}_A^0$ is braided equivalent to the unique metric group of order $4$ such that all nontrivial invertible objects have twist $-1$, or $\mathcal{C}(C_2,q)$ with $q(1)=\zeta_4$.  In the former case, the twists of $\mathcal{C}$ would be $1$, $-1$, and $\zeta_3$, but such a modular fusion category cannot have $\xi=-1$.  In the latter case, $\gamma=\zeta_8$, hence $t_\mathbbm{1}=\zeta_8^7$, $t_X=\zeta_8$, and $t_Y=\zeta_{24}^5$.  Such a combination of $t$-eigenvalues is not possible since any level $24$ irreducible summand of $\rho_\mathcal{C}'$ would be a tensor product of level $3$ and level $8$ irreducible factors, and all such products have $t$-eigenvalues which are not coprime by Lemmas \ref{one} and \ref{two}.
\end{proof}


\subsection{Three twists}\label{secsecthree}

\par The entirety of this subsection will prove the following classification.  For brevity, we will use the Lie-theoretic notation for the modular fusion categories of rank $2$ and $3$.  The interested reader may reference \cite{MR4079742} for a digestible overview of this family of categories.

\begin{theorem}\label{th:thre}
Let $\mathcal{C}$ be a modular fusion category with three distinct twists: $1$, $\theta$, and $\eta$.  Then $N=3$ or $\mathcal{C}$ is braided equivalent to one of categories recorded in Figure \ref{fig:three}.
\end{theorem}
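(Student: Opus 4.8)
The plan is to run the architecture of the proof of Theorem~\ref{th:twotwists} one twist higher. Because the unit object always carries the twist $1$, the three twists are $1,\theta,\eta$ exactly as stated; keeping the notation $\mathcal{C}_\zeta$ and $D_\zeta$, I would expand $D=\tau_1\tau_{-1}$ via \cite[Proposition 8.15.4]{tcat} into the single relation
\[
D_1+D_\theta+D_\eta=D_1^2+D_\theta^2+D_\eta^2+D_1D_\theta(\theta+\theta^{-1})+D_1D_\eta(\eta+\eta^{-1})+D_\theta D_\eta(\theta\eta^{-1}+\theta^{-1}\eta).
\]
The decisive new feature is that this is one quadratic in the three unknowns $D_1,D_\theta,D_\eta$, hence underdetermined, so the purely arithmetic elimination of Theorem~\ref{th:twotwists} can no longer close every case by itself.

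As a first step I would bound $N$ by the Galois inspection of Appendix~\ref{appa}, now permitting each $t$-eigenvalue up to three square-conjugates inside the three-element $t$-spectrum. Listing the prime powers $p^k$ whose primitive roots have a square-Galois-orbit of size at most three --- namely $2^1,2^2,2^3,2^4,3^1,3^2,5,7$ --- gives $N\mid 2^4\cdot3^2\cdot5\cdot7$, and Lemma~\ref{two} (no irreducible in Figures~\ref{fig:twots}--\ref{fig:threets} carries a primitive $9$th-root $t$-eigenvalue) removes the factor $3^2$, leaving admissible levels among $\{1,2,3,4,5,7,8,16\}$ and their coprime products.

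With $N$ bounded I would organize the case analysis through Lemmas~\ref{one}--\ref{two}: the $t$-spectrum is a fixed three-element set (the twists scaled by $\gamma^{-1}$), and every irreducible summand of $\rho_\mathcal{C}'$ must have its $t$-spectrum contained in that set, so the figures simultaneously pin down the admissible levels and the admissible three-element sets. Mixed-prime levels persist only when all but one prime-power factor is one-dimensional (the $\otimes C_j$ entries), and the residual mixed possibilities --- where the displayed quadratic is solved with $\theta+\theta^{-1}$ and $\eta+\eta^{-1}$ fixed --- I would resolve by the discriminant-positivity method of Theorem~\ref{th:twotwists}, treating the relation as a quadratic in $D_1$ over a bounded integral range of $(D_\theta,D_\eta)$. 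For the odd prime-power survivors $N\in\{5,7\}$ the totally-real $d$-number argument of the $N=5$ case of Theorem~\ref{th:twotwists} --- $D$ is a totally positive algebraic $d$-number in a small real cyclotomic field by \cite[Corollary 1.4]{codegrees} and \cite[Proposition 7.21.14]{tcat} --- confines $D$ to a short list matching the rank-$\le3$ Lie-theoretic entries of Figure~\ref{fig:three}.

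The hard part will be the $2$-power cases $N\in\{4,8,16\}$, where the three $t$-eigenvalues are forced to high multiplicity and the Gauss-sum relation genuinely fails to bound $D$: for the twist set $\{1,\zeta_4,\zeta_4^3\}$ it degenerates to $D_1+D_\theta+D_\eta=D_1^2+(D_\theta-D_\eta)^2$, which has integer solutions of unbounded size. Finiteness must therefore be imported from structure. Each such $\mathcal{C}$ is nilpotent \cite{MR2383894}, hence a fusion subcategory of a twisted double $\mathcal{Z}(\mathrm{Vec}_G^\omega)$ of a $2$-group \cite[Theorem 1.2]{drinfeld2007grouptheoretical}, and my plan is to combine the induction zero-trace and squared-trace identities~(\ref{zerotrace})--(\ref{zero2trace}) for the ambient double with the congruence bookkeeping of Lemmas~\ref{one}--\ref{two} --- counting how copies of the level-$4,8,16$ irreducibles can sum to a three-element spectrum while respecting the non-empty-intersection criterion \cite[Lemma 3.18]{MR3632091} and the nondegeneracy input \cite[Lemma 5.2.2]{MR4834521} --- to cap the rank; the structured survivors are then the Ising-type categories of level $16$ (the $R_4^0$ family) and the metric groups of ranks $4$ and $16$, obtained by direct classification. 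I expect the decisive obstacle to be the single non-pointed endpoint: proving that the rank-$22$, dimension-$64$ category $\mathcal{Z}(\mathrm{Vec}_{C_2^3})$ of Figure~\ref{fig:twoo} is the unique maximal incomplete example at $N=4$ --- that no larger admissible assembly of level-$4$ summands closes up into symmetric, nonnegative-integer-fusion modular data. It is the verification of this maximal case, rather than any single clean inequality, where the bulk of the effort will concentrate, after which the surviving solutions organize into the entries of Figure~\ref{fig:three}.
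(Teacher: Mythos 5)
Your skeleton matches the paper's: the Galois bound $N\mid 2^4\cdot3^2\cdot5\cdot7$ with the factor $9$ removed via Lemma \ref{two}, the Gauss-sum quadratic of Lemma \ref{lan5}, $d$-number arguments for the totally real cases, and a structural (nilpotency, double-of-a-$2$-group) argument where the quadratic degenerates. But there are two concrete gaps. First, you never address $N=6$ (equivalently the levels divisible by $12$, which the paper isolates in Lemma \ref{twelvemf} and Section \ref{6sec}). Your plan disposes of ``residual mixed possibilities'' by discriminant positivity over a bounded integral range of $(D_\theta,D_\eta)$, but for twists $\{1,\zeta_6,\zeta_6^5\}$ the discriminant of the quadratic in $D_1$ is $-3(D_\theta-D_\eta)^2+2(D_\theta+D_\eta)+1$, which is nonnegative on an unbounded set (take $D_\theta=D_\eta$ arbitrary): the quadratic form degenerates exactly as in your $\{1,\zeta_4,\zeta_4^3\}$ example, so no bound on $D$ results. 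The paper kills $N=6$ by a different mechanism entirely: Witt equivalence to a product of metric $2$- and $3$-groups (which fails to match the required twists and central charge) together with the zero-trace and squared-trace conditions (\ref{zerotrace})--(\ref{zero2trace}) applied to $I(g)$.

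Second, your finiteness mechanism for $N\in\{4,8,16\}$ --- counting how copies of the level-$4$, $8$, $16$ irreducibles can sum to a three-element spectrum in order to ``cap the rank'' --- cannot work: with only three distinct $t$-eigenvalues, nothing in Lemmas \ref{one}--\ref{two} or \cite[Lemma 3.18]{MR3632091} bounds the multiplicity with which an irreducible summand occurs in $\rho_\mathcal{C}'$; the nondegeneracy input \cite[Lemma 5.2.2]{MR4834521} applies only when a single multiplicity-free irreducible exhausts the spectrum, which is how the paper gets rank $3$ for $N=7$ and $N=16$ but is unavailable at $N=4,8$. The paper instead bounds \emph{dimension}, not rank: it de-equivariantizes along Tannakian subcategories generated by invertible objects (Lemmas \ref{core}--\ref{propo2}) until it lands on a twisted double of a $2$-group of order at most $16$, then inspects that finite list of modular data; the rank-$22$ example is found inside \cite[Figure 4]{MR2333187} rather than excluded as a maximal assembly of congruence summands. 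You are also missing the $\mathbb{Q}(\sqrt{2})$ $d$-number analysis (Proposition \ref{dnumberz}) that the paper needs to rule out non-integral global dimensions when $N=8$ with $\xi=\zeta_8$, and the analogous step inside Lemma \ref{atemf} for the candidate twists $\{1,\zeta_8,\zeta_8^5\}$.
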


\begin{figure}[H]
\centering
\begin{equation*}
\begin{array}{|c|ccccc|}
\hline \mathcal{C} & \#/\simeq &\textnormal{Indexed by} & \mathrm{FPdim}(\mathcal{C}) & \theta,\eta  & N\\\hline\hline
\mathcal{C}(C_2,q)\boxtimes\mathcal{C}(C_2,\overline{q}) & 1 & q:C_2\to\mathbb{C}^\times & 4 & \zeta_4,\zeta_4^3 & 4 \\
\mathcal{C}(C_4,q) & 6 & q:C_4\to\mathbb{C}^\times\text{ with} & 4 & -1,q(1) & 8 \\
 & & q(1)^8=1,q(1)^2\neq1 &  & & \\
\mathcal{I}_q & 8 & \textnormal{primitive } q^{16}=1 & 4 & -1,q  & 16 \\
\mathcal{C}(C_5,q) & 2 & q:C_5\to\mathbb{C}^\times & 5 & q(1),q(1)^{-1} & 5  \\
\mathcal{C}(\mathfrak{sl}_2,5,q)_\mathrm{ad}\boxtimes\mathcal{C}(\mathfrak{sl}_2,5,q^{-1})_\mathrm{ad} & 2 & \text{primitive }q^{10}=1 & \frac{5}{2}(3+\sqrt{5}) & q^4,q^{-4} & 5 \\
\mathcal{C}(\mathfrak{sl}_2,7,q)_\mathrm{ad} & 6 & \textnormal{primitive } q^{14}=1 & \frac{7}{4}\csc^2(\pi/7) & q^4,q^{12} & 7 \\
\mathcal{C}(C_4^2,q) & 1 & q:C_4^2\to\mathbb{C}^\times & 16 & \zeta_4,\zeta_4^3 & 4 \\
 & & q(x)\neq-1\text{ for }x\in C_4^2 & & & \\
\mathcal{Z}(\mathrm{Vec}_{C_2^3}^\omega) & 1 & \omega\in H^3(C_2^3,\mathbb{C}^\times) & 64 & \zeta_4,\zeta_4^3 & 4 \\
\hline
\end{array}
\end{equation*}
    \caption{Modular fusion categories with three distinct twists and $N\neq3$ up to braided eq.}%
    \label{fig:three}%
\end{figure}

Assume $\mathcal{C}$ is a modular fusion category with exactly three twists $1$, $\theta$, or $\eta$.  By Galois conjugacy of simple objects (see Appendix \ref{appa}), the $t$-eigenvalues of $\mathcal{C}$ are roots of unity of order dividing $2^4\cdot3^2\cdot5\cdot7$ by inspecting prime powers $p^k$ such that $\varphi(p^k)=(1/2)p^{k-1}(p-1)\leq3$.  But if $9$ divides the order of $t$, $\rho'_\mathcal{C}$ contains an irreducible summand $\nu$ of level divisible by $9$, hence an irreducible factor of $\nu$ is an irreducible $\mathrm{SL}(2,\mathbb{Z}/9\mathbb{Z})$-representation.  No such irreducible representation has $\leq3$ distinct $t$-eigenvalues by Lemmas \ref{one} and \ref{two}, so this case need not be considered, reducing the orders of $t$ under consideration to divisors of $2^4\cdot3\cdot5\cdot7$.  Lemma \ref{atemf} will show that $8$ dividing $n$ implies $N\in\{4,8,16\}$, and Lemma \ref{twelvemf} will show that $12$ dividing $n$ implies $N\in\{3,4,6\}$.  Sections \ref{7sec} and \ref{5sec} show that if $5$ or $7$ divide $n$, then $N$ is $5$ or $7$, respectively.  Since $N$ divides $n$ in generality \cite[Theorem 2.9(a)]{MR3486174}, this demonstrates our argument can be separated into the specific $N$ below, listed in the order of the subsection discussing each case with the number of braided equivalence classes of categories existing in each case.

\begin{figure}[H]
\centering
\begin{equation*}
\begin{array}{|ccc|}
\hline\textnormal{Order of $T$}& \textnormal{Location} & \#/\simeq \\\hline
3 & \textnormal{Section }\ref{3sec} & \infty \\
4 & \textnormal{Section }\ref{4sec} & 5 \\
5 & \textnormal{Section }\ref{5sec}& 4 \\
6 & \textnormal{Section }\ref{6sec}& 0 \\
7 & \textnormal{Section }\ref{7sec} & 6 \\
8 & \textnormal{Section }\ref{8sec}& 4 \\
16 & \textnormal{Lemma \ref{atemf}} & 8\\\hline
\end{array}
\end{equation*}
    \caption{Possible $N$, the section or result which classifies such modular fusion categories, and the corresponding number of braided equivalence classes of categories}%
    \label{fig:eightthree}%
\end{figure}

\par In addition to the more involved Lemmas \ref{atemf} and \ref{twelvemf}, we will rely on the following quadratic relation analogous to Equation (\ref{eqx}) to prove Theorem \ref{th:thre}.

\begin{lemma}\label{lan5}
Let $\mathcal{C}$ be a modular fusion category with three distinct twists: $1$, $\theta$, and $\eta$.  Then
\begin{equation}\label{theeq}
D_1+D_\theta+D_\eta=D_1^2+D_\theta^2+D_\eta^2+(\theta+\theta^{-1})D_1D_\theta+(\eta+\eta^{-1})D_1D_\eta+(\theta\eta^{-1}+\theta^{-1}\eta)D_\theta D_\eta.
\end{equation}
\end{lemma}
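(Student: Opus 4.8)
The plan is to mimic the two-twist computation in Equation (\ref{six}) verbatim, now partitioning the simple objects into three classes according to their twist rather than two. First I would recall from \cite[Proposition 8.15.4]{tcat} that the global dimension factors through the $\pm1$ Gauss sums as $D=\tau_1\tau_{-1}$, where by \cite[Definition 3.1]{MR3997136} one has $\tau_{\pm1}=\sum_{X\in\mathcal{O}(\mathcal{C})}\dim(X)^2\theta_X^{\pm1}$.

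Next, since the twists of $\mathcal{C}$ take only the values $1$, $\theta$, and $\eta$, every simple object lies in exactly one of $\mathcal{C}_1$, $\mathcal{C}_\theta$, or $\mathcal{C}_\eta$, so I would split each Gauss sum along this partition. Collecting the dimension sums $D_\zeta=\sum_{X\in\mathcal{C}_\zeta}\dim(X)^2$ yields the compact expressions $\tau_1=D_1+D_\theta\theta+D_\eta\eta$ and $\tau_{-1}=D_1+D_\theta\theta^{-1}+D_\eta\eta^{-1}$, while $D=D_1+D_\theta+D_\eta$ directly from the definition of global dimension.

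The only computation is then expanding the product $\tau_1\tau_{-1}$. The three diagonal terms contribute $D_1^2+D_\theta^2+D_\eta^2$, using $\theta\theta^{-1}=\eta\eta^{-1}=1$, and the six off-diagonal terms pair into the three symmetric combinations $(\theta+\theta^{-1})D_1D_\theta$, $(\eta+\eta^{-1})D_1D_\eta$, and $(\theta\eta^{-1}+\theta^{-1}\eta)D_\theta D_\eta$. Equating $D=D_1+D_\theta+D_\eta$ with this expansion produces Equation (\ref{theeq}) exactly.

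There is no genuine obstacle here: the statement is purely formal, following from the factorization $D=\tau_1\tau_{-1}$ together with the definition of the Gauss sums, and it requires no hypothesis beyond $\mathcal{C}$ having at most these three twists. The one place that demands care is the coefficient of the mixed term $D_\theta D_\eta$, which is $\theta\eta^{-1}+\theta^{-1}\eta$ rather than $\theta\eta+\theta^{-1}\eta^{-1}$; this is forced by the fact that $\tau_1$ and $\tau_{-1}$ carry inverse powers of the twists, so tracking which class is contributed by which factor is the sole spot where an inversion error could arise.
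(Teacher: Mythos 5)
Your proposal is correct and is exactly the paper's argument: the paper's proof is the one-line instruction to expand $D=\tau_1\tau_{-1}$ in terms of $D_1,D_\theta,D_\eta$, which is precisely the computation you carry out, including the correct pairing $\theta\eta^{-1}+\theta^{-1}\eta$ for the mixed term.
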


\begin{proof}
Expand $D=\tau_1\tau_{-1}$ \cite[Proposition 8.15.4]{tcat} in terms of $D_1,D_\theta,D_\eta$.
\end{proof}

\begin{lemma}\label{atemf}
Let $\mathcal{C}$ be a modular fusion category with three distinct twists: $1$, $\theta$, and $\eta$.  If $8$ divides $n$, then either
\begin{enumerate}
\item $N=16$ and $\mathcal{C}$ is an Ising modular fusion category \cite[Appendix B]{DGNO},
\item $N=8$, $\theta=-1$ and $\eta$ is a primitive $8$th root of unity without loss of generality, or
\item $N=4$.
\end{enumerate}
\end{lemma}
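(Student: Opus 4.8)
The plan is to read the possible twist-sets directly off the $2$-local structure of the congruence representation $\rho_\mathcal{C}'$, using the complete lists in Lemmas \ref{one} and \ref{two}, and then to separate the three conclusions according to the maximal order of a twist. Since $N\mid n$ and the $t$-eigenvalues have order dividing $2^4\cdot3\cdot5\cdot7$, the assumption $8\mid n$ forces the $2$-adic part of $n$ to be $8$ or $16$. Factoring each irreducible summand of $\rho_\mathcal{C}'$ over prime powers as in the proof of Lemma \ref{one}, I would choose a summand $\nu$ whose level has $2$-adic valuation $3$ or $4$; its $2$-local factor $\nu_2$ is then one of the level-$8$ or level-$16$ representations of Figures \ref{fig:twots} and \ref{fig:threets}, namely the two-dimensional $N_3(\chi)_+$-type, the three-dimensional $R_3^0(1,3,\chi)_\pm$-type, or the $R_4^0(\cdot,\cdot,\chi)_\pm$-type. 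As $\rho_\mathcal{C}'$ has at most three distinct $t$-eigenvalues and $\nu_2$ already contributes two or three, every odd-prime-local factor of $\nu$ must be one-dimensional; hence the $t$-spectrum of $\nu$ is a single coprime root of unity $\mu$ times that of $\nu_2$, and in particular the pairwise \emph{ratios} of $t$-eigenvalues inside $\nu$ agree with those of $\nu_2$ and are independent of both $\gamma$ and $\mu$.

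Because $\theta_\mathbbm{1}=1$, exactly one of the three $t$-eigenvalues equals $\gamma^{-1}$, and dividing the others by $\gamma^{-1}$ recovers the nontrivial twists $\theta,\eta$; thus the ratios above pin down the multiset $\{\theta,\eta,\theta\eta^{-1}\}$ up to inversion. Inspecting the three admissible types, $N_3(\chi)_+$ produces a ratio of order $4$, $R_3^0(1,3,\chi)_\pm$ produces ratios of orders $8,8,2$, and $R_4^0(\cdot,\cdot,\chi)_\pm$ produces a ratio of order $16$. If a level-$16$ factor occurs then a twist is a primitive $16$th root and $N=16$; if the largest $2$-power level is $8$ and an $R_3^0$-type factor occurs, matching its ratios against the anchor $\theta_\mathbbm{1}=1$ yields either the twist-set $\{1,-1,\zeta\}$ with $\zeta$ a primitive $8$th root (so $N=8$ with $\theta=-1$) or the configuration $\{1,\zeta_8,-\zeta_8\}$ treated below; and the remaining possibility, in which all three twists have order dividing $4$ while the order-$8$ (or $16$) content of $n$ is carried by $\gamma^{-1}$, is precisely the case $N=4$. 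Throughout I would fix $\gamma$ from the central charge, as in the reality argument ending the proof of Proposition \ref{copropprime}, to decide which $t$-eigenvalue is $\gamma^{-1}$.

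For the case $N=16$ I would identify $\mathcal{C}$ as Ising. Since $\mathrm{FSexp}(\mathcal{C})=2^4$ is a prime power, $\mathcal{C}$ is nilpotent \cite{MR2383894}, hence weakly group-theoretical, so by the Witt-class description of such categories (Section \ref{subsecgroup}, \cite{MR3770935}) its Witt-minimal representative is a product of a metric group with an Ising category. The Ising factor already contributes the three twists $\{1,-1,\zeta_{16}^{\mathrm{odd}}\}$, so for the product to retain exactly three distinct twists the metric factor must be trivial; combining this with the multiplicity-free property of the $R_4^0$-summand and \cite[Lemma 5.2.2]{MR4834521} should force $\mathcal{C}$ to have rank three, hence to be one of the eight Ising categories of Figure \ref{fig:three}.

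The main obstacle is excluding the \emph{spurious} level-$8$ configuration $\{1,\zeta_8,-\zeta_8\}$, in which both nontrivial twists are primitive $8$th roots and $-1$ is absent: this arises from the same $R_3^0$-type spectrum, satisfies Lemma \ref{lan5} once $D_\theta=D_\eta$, and keeps $\tau_1$ real, so neither the quadratic relation nor Gauss-sum reality excludes it. To rule it out I would again use nilpotency: with $N=8=2^3$ the Witt-minimal representative is metric $\boxtimes$ Ising with trivial Ising factor (no order-$16$ twist), and its inherited twists lie in $\{1,\zeta_8,-\zeta_8\}$; but a nondegenerate quadratic form $q$ with a value $q(g)=\zeta_8^{\mathrm{odd}}$ necessarily also takes the value $q(2g)=q(g)^4=-1$, which is unavailable, so the pointed reduction is trivial and $\xi=1$. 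The final, more delicate step --- eliminating the resulting three-twist double directly through the zero- and squared-trace conditions (\ref{zerotrace})--(\ref{zero2trace}), or by a rank bound --- is where the bulk of the careful computation lies, together with the bookkeeping confirming that the $R_3^0$-ratios select $\theta=-1$ rather than this spurious set.
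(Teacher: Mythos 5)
Your overall architecture matches the paper's: factor an irreducible summand $\nu\subset\rho_\mathcal{C}'$ as $\nu_0\otimes\nu_2$ with $\nu_2$ of level $8$ or $16$, observe that the odd-local factor must be one-dimensional, read the twist set off the ratios of the $t$-spectrum of $\nu_2$, and split into the three conclusions; your identification of the Ising case via multiplicity-freeness and \cite[Lemma 5.2.2]{MR4834521} is also how the paper forces rank $3$. You have correctly located the crux, namely excluding the twist sets $\{1,\zeta_8,\zeta_8^5\}$ and $\{1,\zeta_8^3,\zeta_8^7\}$.

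However, your proposed exclusion of that configuration has a genuine gap. First, you assume $D_\theta=D_\eta$ and hence $\tau_1\in\mathbb{R}$, but with twists $\{1,\zeta_8,\zeta_8^5\}$ one also has the possibility $D_\theta\neq D_\eta$ with $\xi=\zeta_4^{\pm1}$ (these appear in the paper's Figure \ref{fig:eightthreep}); the paper kills this subcase by a separate Gauss-sum computation forcing $D_1=1+\sqrt{2}$, which has a negative Galois conjugate. Second, and more seriously, your route through nilpotency and the Witt-minimal representative presupposes that $\mathcal{C}$ is weakly group-theoretical, which you cannot assert here: when the twists are primitive $8$th roots of unity the dimensions lie in $\mathbb{Q}(\sqrt{2})$ and $D$ need not be an integer, so neither \cite[Theorem 8.28]{ENO} nor \cite[Theorem 1.1]{MR3770935} applies. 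The paper must therefore split on whether $D_1\in\mathbb{Z}$: the integral case proceeds essentially as you sketch (Witt-triviality, then the zero-trace and squared-trace conditions (\ref{zerotrace})--(\ref{zero2trace}) applied to $I(g)$ for $g$ of order $2$, using $\theta_Y^2=\zeta_4$), but the non-integral case requires the algebraic $d$-number machinery of Appendix \ref{appa} --- from $D_\theta=\tfrac{1}{2}D_1(D_1-1)$ one deduces that $D_1-1$ is an algebraic $d$-number and invokes Proposition \ref{dnumberz} together with the Galois-conjugate-object bound $1+D/\sigma(D)>2$. Without some substitute for this argument the case $D\in\mathbb{Q}(\sqrt{2})\setminus\mathbb{Q}$ with twists $\{1,\zeta_8,\zeta_8^5\}$ remains open in your proof.
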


\begin{proof}
\par Eight must divide the level of some irreducible summand $\nu\subset\rho_\mathcal{C}'$, hence $\nu\cong\nu_0\otimes\nu_2$ where $\nu_0$ is an irreducible factor with level coprime to $2$ and $\nu_2$ is an irreducible factor with level divisible by $8$.  Note that all possible $\nu_2$ have at least two distinct $t$-eigenvalues by Lemma \ref{one}, therefore $\nu_0$ must be one-dimensional, say with unique $t$-eigenvalue $\omega$ for some third root of unity $\omega$.  Lemmas \ref{one} and \ref{two} imply that $\nu_2$ is either level $16$ or level $8$ and we will treat these cases separately.

\par If $\nu_2$ is level $16$, then the $t$-eigenvalues of $\nu$ are three distinct primitive $8$th and $16$th roots of unity, hence the $t$-eigenvalues of $\nu$ are three distinct $24$th and $48$th roots of unity, determined by $\nu_0$ and $\nu_2$.  Since $\nu$ already has three distinct $t$-eigenvalues, the $t$-eigenvalues of any other irreducible summand of $\rho_\mathcal{C}'$ must be a subset of those of $\nu$.  Therefore it has level divisible by $24$ as well, and moreover must be isomorphic to $\nu$ itself by the above reasoning.  Thus $\rho_\mathcal{C}'\cong n\nu$ for some $n\in\mathbb{Z}_{\geq1}$, and moreover $n=1$ and $\mathrm{rank}(\mathcal{C})=3$ by \cite[Lemma 5.2.2]{MR4834521}.  There are exactly $8$ braided equivalence classes of modular fusion categories of rank $3$ with a $T$-matrix of even order, each with two distinct spherical structures.  These $16$ are known as the Ising modular fusion categories \cite[Appendix B]{DGNO}, and are denoted $\mathcal{I}_q$ where $q=\eta$ is a primitive $16$th root of unity.

\par Alternatively, $\nu_2$ is level $8$ and has either two or three distinct $t$-eigenvalues by Lemmas \ref{one} and \ref{two} and we will treat these cases separately to finish the proof.  If $\nu_2$ has three distinct $t$-eigenvalues, then $\nu_0$ is one-dimensional, hence $\nu$ has $t$-eigenvalues $\{\omega\zeta,\omega\zeta_8,\omega\zeta_8^5\}$ or $\{\omega\zeta,\omega\zeta_8^3,\omega\zeta_8^7\}$ where $\zeta$ is a fourth root of unity and $\omega$ is a third root of unity; these possibilities can be treated identically as $\omega$ is divided out when computing the twists.  That is to say for each possible $t$-spectrum of $\nu_2$, there are three possible $\gamma=t_\mathbbm{1}^{-1}$, giving ostensibly $24$ options for the sets of twists of $\mathcal{C}$, illustrated in Figure \ref{fig:eightthreep}.

\begin{figure}[H]
\centering
\begin{equation*}
\begin{array}{|c|c|c|c||c|c|c|c|}
\hline t\text{-spectrum} & \gamma& \xi & \text{Twists} & t\text{-spectrum} & \gamma & \xi & \text{Twists} \\\hline
\omega,\omega\zeta_8,\omega\zeta_8^5 & \omega^{-1} & 1 & 1,\zeta_8,\zeta_8^5 & \omega\zeta_4,\omega\zeta_8,\omega\zeta_8^5 & \omega^{-1}\zeta_4^3 & \zeta_4 & 1,\zeta_8^3,\zeta_8^7\\
 & \omega^{-1}\zeta_8^7 & \zeta_8^5 &  1,-1,\zeta_8^7 & & \omega^{-1}\zeta_8^7 & \zeta_8^5 & 1,-1,\zeta_8 \\
 & \omega^{-1}\zeta_8^3 & \zeta_8 & 1,-1,\zeta_8^3 &  & \omega^{-1}\zeta_8^3 & \zeta_8 & 1,-1,\zeta_8^5 \\
-\omega,\omega\zeta_8,\omega\zeta_8^5 & -\omega^{-1} & -1 & 1,\zeta_8,\zeta_8^5 &  \omega\zeta_4^3,\omega\zeta_8,\omega\zeta_8^5 & \omega^{-1}\zeta_4 & \zeta_4^3 & 1,\zeta_8^3,\zeta_8^7  \\
 & \omega^{-1}\zeta_8^7 & \zeta_8^5& 1,-1,\zeta_8  & & \omega^{-1}\zeta_8^7 &\zeta_8^5 & 1,-1,\zeta_8^5  \\
 & \omega^{-1}\zeta_8^3 & \zeta_8 & 1,-1,\zeta_8^5 &  & \omega^{-1}\zeta_8^3 & \zeta_8& 1,-1,\zeta_8  \\\hline\hline
\omega,\omega\zeta_8^3,\omega\zeta_8^7  &\omega^{-1} & 1 & 1, \zeta_8^3,\zeta_8^7&\omega\zeta_4,\omega\zeta_8^3,\omega\zeta_8^7 & \omega^{-1}\zeta_4^3& \zeta_4& 1,\zeta_8,\zeta_8^5\\
 & \omega^{-1}\zeta_8^5 & \zeta_8^7& 1,-1,\zeta_8^5 & &\omega^{-1}\zeta_8^5 & \zeta_8^7& 1,-1,\zeta_8^7\\
 & \omega^{-1}\zeta_8 & \zeta_8^3& 1,-1,\zeta_8 & & \omega^{-1}\zeta_8& \zeta_8^3& 1,-1,\zeta_8^3\\
-\omega,\omega\zeta_8^3,\omega\zeta_8^7& -\omega^{-1} & -1 & 1, \zeta_8^3,\zeta_8^7 & \omega\zeta_4^3,\omega\zeta_8^3,\omega\zeta_8^7 & \omega^{-1}\zeta_4 &\zeta_4^3 & 1,\zeta_8,\zeta_8^5\\
& \omega^{-1}\zeta_8^5& \zeta_8^7& 1,-1,\zeta_8& & \omega^{-1}\zeta_8^5& \zeta_8^7& 1,-1,\zeta_8^3\\ 
&\omega^{-1}\zeta_8 & \zeta_8^3&1,-1,\zeta_8^5 & & \omega^{-1}\zeta_8 & \zeta_8^3& 1,-1,\zeta_8^7\\\hline
\end{array}
\end{equation*}
    \caption{Twists when $\nu_2$ is level $8$ with three distinct $t$-eigenvalues}%
    \label{fig:eightthreep}%
\end{figure}

\par Many of these combinations of fulls twists are incompatible with the corresponding $\xi$ since $\xi$ must have the same argument as $\tau_1=D_1+D_\theta\theta+D_\eta\eta$.  The possible twists and $\xi$ when $\theta$ is a primitive eighth root of unity are the Galois conjugate pair $\{1,\zeta_8,\zeta_8^5\}$ and $\{1,\zeta_8^3,\zeta_8^7\}$, both with $\xi=1$, and the Galois conjugate pair $\{1,\zeta_8,\zeta_8^5\}$ with $\xi=\zeta_4^3$ and $\{1,\zeta_8^3,\zeta_8^7\}$ with $\xi=\zeta_4$.  In the former case, $D_\theta=D_\eta$, hence,
\begin{equation}
D_1+2D_\theta=D_1^2+2D_\theta^2+(\theta+\theta^{-1})D_1D_\theta-(\theta+\theta^{-1})D_1D_\theta-2D_\theta^2
\end{equation}
by Lemma \ref{lan5}, which implies $D_\theta=(1/2)D_1(D_1-1)$.  If $D_1\in\mathbb{Z}$, then $D=D_1^2$ is a perfect square integer with $\xi=1$.  As $\mathcal{C}$ is nilpotent \cite[Theorem 8.28]{ENO}, $\mathcal{C}$ is weakly group-theoretical, and therefore $\mathcal{C}$ is Witt equivalent to a pointed modular fusion category \cite[Theorem 1.1]{MR3770935} based on the possible twists.  Furthermore, $\mathcal{C}$ must be Witt trivial as there are no metric groups with only twists $1$, $\zeta_8$, and $\zeta_8^5$.  So we have $\mathcal{C}$ is the twisted double of a $2$-group \cite[Theorem 6.6]{DGNO} of order $D_1$.  But let $g\in G$ be any nontrivial element of order $2$.  Then $I(g)$ satisfies the zero trace property (Equation (\ref{zerotrace})) only if $\theta_Y\in\{\zeta_8,\zeta_8^5\}$ for all simple $Y\subset I(g)$.  Moreover, $I(g)$ fails the square trace property (Equation (\ref{zero2trace})) since $\theta_Y^2=\zeta_4$ for all $Y\subset I(g)$.  Therefore no such categories exist with $D\in\mathbb{Z}$, twists $\{1,\zeta_8,\zeta_8^5\}$, and $\xi=1$.

\par If $D_1\not\in\mathbb{Z}$, $D_\theta$ is an algebraic $d$-number as $\mathcal{C}_\theta$ is closed under Galois conjugacy.  Moreover $D_\theta^2$ is an integer multiple of an algebraic unit, which implies $(D_1-1)^2$ is a rational multiple of a unit, i.e.\ $D_1-1$ is an algebraic $d$-number since $D_1-1$ is an algebraic integer.  Then Proposition \ref{dnumberz} implies that $D_1$ has a Galois conjugate less than $2$.  But since $D\not\in\mathbb{Z}$, the tensor unit has a Galois conjugate object (see Appendix \ref{appa}), say $X$, such that $\dim(X)^2=D/\sigma(D)$ where $\sigma\in\mathrm{Gal}(\overline{\mathbb{Q}}/\mathbb{Q})$ satisfies $\sigma(\sqrt{2})=-\sqrt{2}$, and we have either $1+D/\sigma(D)$ or $1+\sigma(D)/D$ is greater than $2$ since $D/\sigma(D)$ is a totally positive algebraic integer.  Therefore no such categories exist with twists $\{1,\zeta_8,\zeta_8^5\}$ or $\{1,\zeta_8^3,\zeta_8^7\}$.

\par To complete the case when $\theta$ is a primitive $8$th root of unity, it remains to consider twists $\{1,\zeta_8,\zeta_8^5\}$ with $\xi=\zeta_4^3$ without loss of generality.
\begin{figure}[H]
\centering
\begin{equation*}
\begin{tikzpicture}[scale=0.18]
\draw[<->,dotted] (0,-10) -- (0,10);
\draw[<->,dotted] (-10,0) -- (10,0);
\draw[thick] (0,0) -- node[above left] {$D_\eta+D_\theta$} (-6,-6);
\draw[->,dashed] (0,0) -- (10,10);
\draw[->,dashed] (0,0) -- (-10,-10);
\draw[-,thick] (0,0) -- node[right] {$\sqrt{D}$} (0,-6);
\draw[thick] (-6,-6) -- node[below] {$D_1$} (0,-6);
\end{tikzpicture}
\end{equation*}
    \caption{A geometric view of $\tau_1=\zeta_4^3\sqrt{D}$ with twists $\{1,\zeta_8,\zeta_8^5\}$ and $\xi=\zeta_4^3$}%
    \label{fig:eightonenowzz}%
\end{figure}
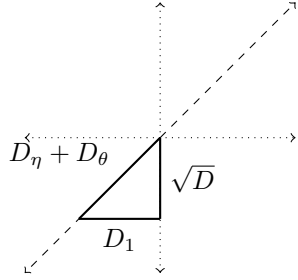
Geometrically, $D_1=\sqrt{D}=\sqrt{D_1+D_\theta+D_\eta}$, hence $D_1=(1/2)\left(1+\sqrt{1+4(D_\theta+D_\eta)}\right)$.  Elementary trigonometry gives $D_1\sqrt{2}=D_\theta+D_\eta$, hence $D_1=(1/2)\left(1+\sqrt{1+4\sqrt{2}D_1}\right)$ which implies $D_1=1+\sqrt{2}$ which cannot occur since $D_1$ has a Galois conjugate less than zero, and $D_1$ must be totally greater than or equal to $1$ since $\theta_\mathbbm{1}=1$.  We may then conclude $\theta=-1$ and $\eta$ is a primitive $8$th root of unity, as we aimed to prove.

\par If $\nu_2$ has only two distinct $t$-eigenvalues, then $\nu_2$ is one of the four isomorphism classes of level $8$ irreducible representations of dimension $2$.  This forces all $t$-eigenvalues of $\mathcal{C}$ to be $24$th roots of unity, i.e.\ $\{\omega\zeta_8,\omega\zeta_8^3,\omega\zeta_8^5\}$, $\{\omega\zeta_8,\omega\zeta_8^3,\omega\zeta_8^7\}$, $\{\omega\zeta_8,\omega\zeta_8^5,\omega\zeta_8^7\}$, or $\{\omega\zeta_8^3,\omega\zeta_8^5,\omega\zeta_8^7\}$ for some primitive third root of unity $\omega$.  For each option, there are three choices for $\gamma$, giving ostensibly $12$ options for the sets of twists of $\mathcal{C}$, recorded in Figure \ref{fig:eighttwo}.  All such sets of twists have $N=4$ as we aimed to prove.

\begin{figure}[H]
\centering
\begin{equation*}
\begin{array}{|c|c|c|c||c|c|c|c|}
\hline t-\text{eigenvalues} & \gamma & \xi &  \text{Twists} & t-\text{eigenvalues} & \gamma & \xi &\text{Twists}\\\hline
\omega\zeta_8,\omega\zeta_8^3,\omega\zeta_8^5&\omega^{-1}\zeta_8^7& \zeta_8^5 & 1,-1,\zeta_4  & \omega\zeta_8,\omega\zeta_8^5,\omega\zeta_8^7&\omega^{-1}\zeta_8^7& \zeta_8^5 &1,-1,\zeta_4^3 \\
&\omega^{-1}\zeta_8^5& \zeta_8^7&1,\zeta_4,\zeta_4^3 && \omega^{-1}\zeta_8^3& \zeta_8&1,-1,\zeta_4 \\
&\omega^{-1}\zeta_8^3& \zeta_8&1,-1,\zeta_4^3 & &\omega^{-1}\zeta_8& \zeta_8^3&1,\zeta_4,\zeta_4^3 \\\hline\hline
\omega\zeta_8,\omega\zeta_8^3,\omega\zeta_8^7&\omega^{-1}\zeta_8^7& \zeta_8^5 &1,\zeta_4,\zeta_4^3 & \omega\zeta_8^3,\omega\zeta_8^5,\omega\zeta_8^7&\omega^{-1}\zeta_8^5& \zeta_8^7&1,-1,\zeta_4 \\
&\omega^{-1}\zeta_8^5& \zeta_8^7&1,-1,\zeta_4^3 & &\omega^{-1}\zeta_8^3& \zeta_8&1,\zeta_4,\zeta_4^3\\
&\omega^{-1}\zeta_8& \zeta_8^3&1,-1,\zeta_4 & &\omega^{-1}\zeta_8& \zeta_8^3 &1,-1,\zeta_4^3 \\\hline
\end{array}
\end{equation*}
    \caption{Twists when $\nu_2$ is level $8$ with two distinct $t$-eigenvalues}%
    \label{fig:eighttwo}%
\end{figure}
\end{proof}

\begin{lemma}\label{twelvemf}
Let $\mathcal{C}$ be a modular fusion category with three distinct twists: $1$, $\theta$, and $\eta$.  If $n=12$, then $N\in\{3,4,6\}$.
\end{lemma}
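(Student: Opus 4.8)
The plan is to reduce the statement to a purely combinatorial question about the three $t$-eigenvalues and then settle it using the prime-power factorization of $\rho_\mathcal{C}'$ together with the non-empty intersection criterion. First I would record the easy reductions. Since $N$ divides $n=12$ \cite[Theorem 2.9(a)]{MR3486174} and $\mathcal{C}$ has three distinct twists (so $N=\mathrm{lcm}(\mathrm{ord}(\theta),\mathrm{ord}(\eta))\geq3$), we have $N\in\{3,4,6,12\}$ and it suffices to rule out $N=12$. Moreover $t_\mathbbm{1}=\gamma^{-1}$ is itself a $t$-eigenvalue, so $\gamma\in\mu_{12}$ and every twist $\theta_X=t_X\gamma=t_Xt_\mathbbm{1}^{-1}$ is a twelfth root of unity. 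Writing $S=\{s_1,s_2,s_3\}$ for the set of three $t$-eigenvalues and using $\mu_{12}\cong\mu_4\times\mu_3$ to factor each $s=s^{(4)}s^{(3)}$ with $s^{(4)}\in\mu_4$ and $s^{(3)}\in\mu_3$, the condition $N=12$ is equivalent to the existence of a pair $s,s'\in S$ with $s^{(4)}/s'^{(4)}$ of order $4$ and $s^{(3)}/s'^{(3)}$ of order $3$ (this is exactly what it means for the twist $s/t_\mathbbm{1}$ to be a primitive twelfth root). Call such a pair \emph{forbidden}. Because $t_\mathbbm{1}\in S$, it suffices to prove that $S$ contains no forbidden pair at all.

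Next I would describe the structure of $S$. Decomposing $\rho_\mathcal{C}'\cong\bigoplus_i m_i\rho_i$ and then each $\rho_i\cong\rho_{i,4}\otimes\rho_{i,3}$ into its level-$4$ and level-$3$ factors \cite[Lemma 7]{MR1354262}, the $t$-spectrum of $\rho_i$ is the product $A_i\cdot B_i$ where $A_i\subseteq\mu_4$ and $B_i\subseteq\mu_3$ are the spectra of the two factors, each realizable by Lemmas \ref{one} and \ref{two}. Thus every summand contributes a ``rectangle'' of eigenvalues, the union of these rectangles is $S$, and the spectra of any two summands meet by the non-empty intersection criterion \cite[Lemma 3.18]{MR3632091}. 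The single fact that does all the work is that no realizable $A_i$ has a two-element subset containing both an element of $\{1,-1\}$ and an element of $\{\zeta_4,\zeta_4^3\}$: inspecting Lemmas \ref{one} and \ref{two}, the only two-element spectra of level dividing $4$ are $\{\zeta_4,\zeta_4^3\}$ and $\{1,-1\}$, whereas a mixed pair occurs only inside a three-element spectrum $A_i$ arising from $D_2(\chi)_+$. Since $|S|=3$, a three-element $A_i$ forces $|B_i|=1$ because $|A_i|\,|B_i|\leq3$.

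The hard part is the case analysis ruling out a forbidden pair $p,q\in S$, say $p^{(4)}\in\{1,-1\}$, $q^{(4)}\in\{\zeta_4,\zeta_4^3\}$ and $p^{(3)}\neq q^{(3)}$. If $p$ and $q$ lie in a common rectangle $R$, then $A_R$ contains a mixed pair, so $|A_R|=3$ and $|B_R|=1$, forcing $p^{(3)}=q^{(3)}$, a contradiction. Otherwise no rectangle contains both, so choosing $R_p\ni p$ and $R_q\ni q$ we have $q\notin R_p$ and $p\notin R_q$; by the intersection criterion $R_p\cap R_q$ contains some $r\in S$ with $r\neq p,q$, whence $S=\{p,q,r\}$ and $r\in R_p\cap R_q$. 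Distinguishing whether $r^{(4)}$ has order $\leq2$ or order $4$ then shows that one of $R_p,R_q$ either acquires a two-element level-$4$ spectrum mixing $\{1,-1\}$ with $\{\zeta_4,\zeta_4^3\}$ (impossible) or, being a three-element rectangle with $|B_i|=1$, must contain a point sharing a single $\mu_3$-value yet lying outside $S=\{p,q,r\}$ (impossible, since $p^{(3)}\neq q^{(3)}$). Hence $S$ has no forbidden pair, no twist is a primitive twelfth root, and $N\in\{3,4,6\}$. I expect the bookkeeping in this final case distinction, together with the verification that the listed two-element level-$4$ spectra are exhaustive, to be the only delicate points; everything else follows directly from the reductions above.
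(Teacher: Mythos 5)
Your proof is correct in substance but takes a genuinely different route from the paper's. The paper first argues (via \cite[Lemma 3.18]{MR3632091}) that either $\rho_\mathcal{C}'$ has two non-isomorphic two-dimensional irreducible summands and no three-dimensional ones, or it has a three-dimensional irreducible summand; in the first case it tabulates all eighteen two-dimensional irreducibles of level dividing $12$ and all compatible pairs, reads off the twists for each choice of $\gamma$, and observes $N\mid 6$ every time, while in the second case it notes that one tensor factor of the three-dimensional summand must be one-dimensional, forcing all twists to have order dividing $4$ or dividing $3$. Your decomposition of each summand's $t$-spectrum into a rectangle $A_i\cdot B_i$ under $\mu_{12}\cong\mu_4\times\mu_3$ replaces both tables with the single observation that no realizable two-element $t$-spectrum of level dividing $4$ mixes $\{1,-1\}$ with $\{\zeta_4,\zeta_4^3\}$, and it treats the two- and three-dimensional summands uniformly; the reduction of $N=12$ to the existence of a forbidden pair in $S$ is exactly right, and Case A and the two sub-cases of Case B do deliver the contradictions you claim.

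The one step you should repair is the assertion that ``the spectra of any two summands meet.'' The criterion of \cite[Lemma 3.18]{MR3632091}, as this paper uses it, applies to a decomposition of $\rho_\mathcal{C}'$ into \emph{two} complementary subrepresentations; pairwise-disjoint irreducible summands can coexist (this happens already for doubles of finite groups), so the pairwise version you invoke is false in general. Fortunately the instances you need are derivable from the two-part version: assuming a forbidden pair $p,q$ and that no rectangle contains both, split $\rho_\mathcal{C}'$ into the sum of summands whose spectra contain $q$ and the sum of those whose spectra do not. Both parts are nonzero, their spectra are contained in $\{q,r\}$ and $\{p,r\}$ respectively, so the two-part criterion forces some rectangle to contain $\{q,r\}$; symmetrically some rectangle contains $\{p,r\}$. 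Feeding these two rectangles into your case distinction on the order of $r^{(4)}$ makes the rest of the argument go through verbatim.
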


\begin{proof}
As in previous proofs, Lemmas \ref{one} and \ref{two} give a finite number of possible irreducible summands of $\rho_\mathcal{C}'$, all of dimensions $1$, $2$, or $3$.  Note that if $\rho_\mathcal{C}'$ has a unique isomorphism class of irreducible $2$-dimensional summands with $t$-spectrum $\{\xi_1,\xi_2\}$ for some roots of unity $\xi_1,\xi_2$, and no irreducible $3$-dimensional summands, then $\rho_\mathcal{C}'\cong \nu\oplus\nu'$ where $\nu$ is the direct sum of all irreducible summands of $\rho_\mathcal{C}'$ whose $t$-spectra are a subset of $\{\xi_1,\xi_2\}$, and $\nu'$ is the complement of $\nu$ in $\rho_\mathcal{C}'$.  This would violate \cite[Lemma 3.18]{MR3632091}, hence there are only two cases to consider: the irreducible summands of $\rho_\mathcal{C}'$ include only two distinct isomorphism classes of $2$-dimensional representations and no $3$-dimensional representations, or $\rho_\mathcal{C}'$ has a $3$-dimensional irreducible summand.  To understand the former case, we record all possible $2$-dimensional irreducible summands in Figure \ref{fig:twelvewan}.
\begin{figure}[H]
\centering
\begin{equation*}
\begin{array}{|cc|cc|cc|}\hline
\textnormal{Name} & t-\mathrm{spectrum} & \textnormal{Name} & t-\mathrm{spectrum} & \textnormal{Name} & t-\mathrm{spectrum}\\\hline
N_1(\chi)\otimes C_5 & \zeta_4^3,\zeta_{12} & N_1(\chi)\otimes C_7 & \zeta_4,\zeta_{12}^{11}       &  N_1(\chi)\otimes C_{10} & -1,\zeta_6 \\
N_1(\chi)\otimes C_9 & \zeta_{12},\zeta_{12}^5  & N_1(\chi)\otimes C_3 & \zeta_{12}^7,\zeta_{12}^{11} & N_1(\chi)\otimes C_2 & -1,\zeta_6^5 \\
N_2(\chi)\otimes C_4 & \zeta_{12},\zeta_{12}^7 & N_2(\chi) & \zeta_4,\zeta_4^3 &  N_1(\chi) & \zeta_3,\zeta_3^2\\
N_2(\chi)\otimes C_8 & \zeta_{12}^5,\zeta_{12}^{11}  & N_1(\chi)\otimes C_8 & 1,\zeta_3 &  N_1(\chi_1)\otimes C_4 & \zeta_3,\zeta_6^5 \\
N_1(\chi)\otimes C_{11} & \zeta_4,\zeta_{12}^7  &  N_1(\chi)\otimes C_4 & 1,\zeta_3^2 & N_1(\chi_1)\otimes C_8 & \zeta_3^2,\zeta_6\\
N_1(\chi)\otimes C_1 & \zeta_4^3,\zeta_{12}^5  &  N_1(\chi_1) & -1,1 & N_1(\chi)\otimes C_6 & \zeta_6,\zeta_6^5 \\\hline
\end{array}
\end{equation*}
    \caption{Two-dimensional irreducible representations with level dividing $12$}%
    \label{fig:twelvewan}%
\end{figure}
We may assume by Galois conjugacy of modular fusion categories that $\zeta_{12}$ is a $t$-eigenvalue of $\mathcal{C}$, hence ostensibly, the possible $t$-spectra of $\mathcal{C}$ and corresponding twists are given in Figure \ref{fig:twelvetoo} up to Galois conjugates.  One observes that $N$ divides $6$ in any case.
\begin{figure}[H]
\centering
\begin{equation*}
\begin{array}{|cccc|}\hline
\rho_\mathcal{C}'\textnormal{ contains summand} & t-\mathrm{spectrum} & \gamma & \textnormal{Twists}\\\hline
N_2(\chi)\oplus N_1(\chi)\otimes C_5 & \zeta_4,\zeta_4^3,\zeta_{12} & \zeta_4^3 & 1,-1,\zeta_6^5\\
 &  & \zeta_4 & 1,-1,\zeta_3 \\
 &  & \zeta_{12}^{11} & 1,\zeta_3^2,\zeta_6,\\
N_1(\chi)\otimes(C_4\oplus C_{11})  & \zeta_4,\zeta_{12},\zeta_{12}^7& \zeta_4^3 & 1,\zeta_3,\zeta_6^5\\
 &  & \zeta_{12}^{11} & 1,-1,\zeta_6\\
 &  & \zeta_{12}^5 & 1,-1,\zeta_3^2\\
N_1(\chi)\otimes(C_i\oplus C_j) & \zeta_4^3,\zeta_{12},\zeta_{12}^5 & \zeta_4 & 1,\zeta_3,\zeta_3^2\\
i\neq j\in\{1,5,9\} &  & \zeta_{12}^{11}  & 1,\zeta_3,\zeta_3^2 \\
 &  & \zeta_{12}^7 & 1,\zeta_3,\zeta_3^2 \\
N_1(\chi)\otimes(C_5\oplus C_4) & \zeta_4^3,\zeta_{12},\zeta_{12}^7 & \zeta_4 & 1,\zeta_3,\zeta_6^5\\
 &  &  \zeta_{12}^{11}& 1,-1,\zeta_3^2\\
 &  & \zeta_{12}^5 & 1,-1,\zeta_6 \\
N_1(\chi)\otimes(C_9\oplus C_4)  & \zeta_{12},\zeta_{12}^5,\zeta_{12}^7& \zeta_{12}^{11} & 1,-1,\zeta_3\\
 &  & \zeta_{12}^7 & 1,\zeta_3^2,\zeta_6 \\
 &  & \zeta_{12}^5 & 1,-1,\zeta_6^5\\
N_1(\chi)\otimes(C_9\oplus C_8)  & \zeta_{12},\zeta_{12}^5,\zeta_{12}^{11}& \zeta_{12}^{11} & 1,\zeta_3,\zeta_6^5\\
 &  & \zeta_{12}^7 & 1,-1,\zeta_3^2\\
 &  &\zeta_{12}  & 1,-1,\zeta_6\\\hline
\end{array}
\end{equation*}
    \caption{Possible twists when $12$ divides the order of $t$}%
    \label{fig:twelvetoo}%
\end{figure}
Lastly we assume there exists an irreducible summand $\nu$ of $\rho_\mathcal{C}'$ which is $3$-dimensional, and thus there exists a factorization $\nu\cong\nu_2\otimes\nu_3$ where $\nu_2$ has level $4$ and $\nu_3$ has level $3$.  If $\nu_2$ is $3$-dimensional, then $\nu_3$ must have a unique primitive third root of unity as its $t$-spectrum, hence scaling by $\gamma^{-1}$ implies the twists all have order $4$.  The same argument holds if $\nu_3$ is $3$-dimensional, forcing the order of $T$ to be $3$.
\end{proof}

This completes the preliminary arguments and we may now proceed to classify the categories in accordance to Figure \ref{fig:eightthree}.

\subsubsection{$N=3$}\label{3sec}

\par This case lies firmly in the study of finite groups.  In this case $\mathcal{C}$ is integral, and therefore nilpotent and $D=3^m$ for some $m\in\mathbb{Z}_{\geq1}$.  Moreover, $\mathcal{C}\boxtimes\mathcal{P}\simeq\mathcal{Z}(\mathrm{Vec}_G^\omega)$ is a braided equivalence for a pointed modular fusion category $\mathcal{P}$ of dimension $1$, $3$, or $9$, a finite group $G$ of exponent $3$, and $\omega\in H^3(G,\mathbb{C}^\times)$.  Infinitely many examples exist, e.g.\ $N=3$ for $\mathcal{Z}(\mathrm{Vec}_G^\omega)$ for any finite group $G$ of exponent $3$ when $\omega$ is trivial \cite[Theorem 9.2]{MR2313527}.


\subsubsection{$N=7$}\label{7sec}

\par Assume $7$ divides $N$.  In this case $\rho'_\mathcal{C}$ contains a simple summand $\nu$ of level divisible by $7$, hence a factor of $\nu$ is an irreducible $\mathrm{SL}(2,\mathbb{Z}/7\mathbb{Z})$-representation by Lemmas \ref{one} and \ref{two}; there are exactly two isomorphism classes of such representations with $\leq3$ distinct $t$-eigenvalues, each having exactly $3$ distinct $t$-eigenvalues.  Thus if $\nu$ contains any other irreducible factors, they are one-dimensional.  Therefore, up to Galois conjugacy of modular fusion categories, the $t$-spectrum of $\nu$ is $\{\epsilon\zeta_7,\epsilon\zeta_7^2,\epsilon\zeta_7^4\}$ where $\epsilon^{12}=1$.  If $\nu'\subset\rho_\mathcal{C}'$ is any other irreducible summand, then $\nu'$ must share a $t$-eigenvalue with $\nu$ and moreover $\nu'=\nu$ by the same reasoning and thus $\rho_\mathcal{C}'\cong m\nu$ for some $m\in\mathbb{Z}_{\geq1}$.  By \cite[Lemma 5.2.2]{MR4834521}, $m=1$ and $\mathrm{rank}(\mathcal{C})=3$.  Moreover, $\mathcal{C}\simeq\mathcal{C}(\mathfrak{sl}_2,7,q)_\mathrm{ad}$ where $q^2$ is a primitive seventh root of unity by \cite[Theorem 1.1]{MR3427429}.


\subsubsection{$N=6$}\label{6sec}

\par In this case, $\mathcal{C}$ is integral \cite[Theorem 2.7]{MR3486174}, so $\xi^8=1$ \cite[Proposition 2.6]{MR4836055}, and thus we can assume $\gamma^8=1$ as well.  The $t$-eigenvalues are then $\gamma^{-1}$, $\theta\gamma^{-1}$, and $\eta\gamma^{-1}$ where either both $\theta$ and $\eta$ have order divisible by $3$, or just $\eta$ without loss of generality.  If $\gamma$ were a primitive eighth root of unity, then there exists an irreducible summand $\nu\subset\rho_\mathcal{C}'$ of level $24$.  Lemmas \ref{one} and \ref{two} imply that $\nu\cong\nu_2\otimes\nu_3$ where $\nu_2$ is an irreducible factor of level $8$ and dimension $2$ or $3$, and $\nu_3$ is an irreducible factor of level $3$ of dimension $1$.  Therefore the $t$-spectrum of $\nu$ is either two distinct $24$th roots of unity, or two distinct $24$th roots of unity and a root of unity whose order is divisible by $3$.  The latter case is impossible since $\mathcal{C}$ has a $t$-eigenvalue of order $8$ and the former case is impossible since any other irreducible summand $\nu'\subset\rho_\mathcal{C}'$ which has a $t$-eigenvalue of order $8$, has at least two distinct $t$-eigenvalues by Lemmas \ref{one} and \ref{two}.  Moreover $\gamma^4=\xi^4=1$.

\par Ostensibly, there are $20$ possibilities for distinct, nontrivial twists $\theta$ and $\eta$ which are sixth roots of unity, but we have assumed $N=6$ and Theorem \ref{thmthreetwist} implies they cannot have coprime order.  This leaves only the options $\{1,-1,\zeta_6^{\pm1}\}$, $\{1,\zeta_3,\zeta_6^{\pm1}\}$, $\{1,\zeta_3^2,\zeta_6^{\pm1}\}$, or $\{1,\zeta_6,\zeta_6^5\}$, while up to complex conjugacy of modular fusion categories, we need only consider the pairs of cases $\{1,-1,\zeta_6\}$, $\{1,\zeta_6,\zeta_6^5\}$, and $\{1,\zeta_3,\zeta_6\}$, $\{1,\zeta_3,\zeta_6^5\}$.

\par For $\{1,-1,\zeta_6\}$, geometrically we must have $\xi=\zeta_4$.  As $\mathcal{C}$ is integral, $\mathcal{C}$ is solvable \cite[Theorem 1.6]{solvable}, hence weakly group-theoretical, and moreover Witt equivalent to a pointed modular fusion category $\mathcal{P}$ with $\xi=\zeta_4$ \cite[Theorem 1.1]{MR3770935}.  As $\mathcal{P}$ is nontrival, $\mathcal{P}\simeq\mathcal{P}_2\boxtimes\mathcal{P}_3$ is a braided equivalence where $\mathcal{P}_2$ is a metric $2$-group and $\mathcal{P}_3$ is a metric $3$-group.  If $\mathcal{P}_3$ were trivial, then the twists of $\mathcal{P}\simeq\mathcal{P}_2$ are only $\pm1$, violating the fact that $\xi=\zeta_4$.  Hence $\mathcal{P}_3$ is nontrivial which forces $\mathcal{P}_2$ to be nontrivial as well since $\mathcal{C}$ has no twists which are third roots of unity.  Checking the very small number of options, one finds there are no such metric groups with the correct first multiplicative central charge and twists.  When the twists are $\{1,\zeta_6,\zeta_6^5\}$, then $\gamma=\xi=1$.  There are no nontrivial pointed products $\mathcal{P}\simeq\mathcal{P}_2\boxtimes\mathcal{P}_3$ which $\mathcal{C}$ could be Witt equivalent to, hence $\mathcal{C}\simeq\mathcal{Z}(\mathrm{Vec}_G^\omega)$ for a finite group $G$ and $\omega\in H^3(G,\mathbb{C}^\times)$.  But no nontrivial positive integer linear combination of $1,\zeta_6,\zeta_6^5$ can vanish, hence $I(g)$ for any nontrivial $g\in G$ cannot satisfy Equation (\ref{zerotrace}).

\par For $\{1,\zeta_3,\zeta_6\}$, we must have $\gamma=\xi=\zeta_4$, hence $\mathcal{C}$ is Witt equivalent to a nontrivial pointed product $\mathcal{P}\simeq\mathcal{P}_2\boxtimes\mathcal{P}_3$ as in the argument in the preceding paragraph; no such products have the correct twists and $\xi$.  Lastly, with twists $\{1,\zeta_3,\zeta_6^5\}$ then $\xi=1$ or $\xi=\zeta_4$.  There is a unique product of nontrivial pointed categories with the desired twists: $\mathcal{C}(C_2^2,q_{-1})\boxtimes\mathcal{C}(C_3,q_{\zeta_3})$ where $q_{-1}$ and $q_{\zeta_3}$ take the subscripted values on all nontrivial elements of $C_2^2$ and $C_3$, respectively.  But this product has central charge $\zeta_4^3$, so it cannot be Witt equivalent to $\mathcal{C}$, and when $\xi=1$, $\mathcal{C}$ is a twisted double of a finite group $G$ with a nontrivial element of order $2$, say $g$.  It is possible that $I(g)$ satisfies the zero trace property (Equation (\ref{zerotrace})), but only if $\theta_Y\in\{\pm\zeta_3\}$ for all $Y\subset I(g)$.  But in this case $I(g)$ fails the square trace property (Equation (\ref{zero2trace})) for self dual objects since $\theta_Y^2=\zeta_3^2$ for all $Y\subset I(g)$.


\subsubsection{$N=5$}\label{5sec}

When $N=5$, it is possible to characterize $\dim(\mathcal{C})$ as one of the $5$ algebraic $d$-numbers from Lemma \ref{lemroot5} in Appendix \ref{appa}.  All such modular fusion categories are easily described using the existing literature.

\begin{lemma}\label{propfib}
Let $\mathcal{C}$ be a modular fusion category.  If $D$ is one of the possible global dimensions in Lemma \ref{lemroot5}, then $\mathcal{C}$ is equivalent as a modular fusion category to one of the following:
\begin{enumerate}
\item the pointed modular fusion categories $\mathcal{C}(C_5,q)$,
\item the Fibonacci modular fusion categories $\mathcal{C}(\mathfrak{sl}_2,5,q)_\mathrm{ad}$, or
\item the products of Fibonacci modular fusion categories $\mathcal{C}(\mathfrak{sl}_2,5,q_1)_\mathrm{ad}\boxtimes\mathcal{C}(\mathfrak{sl}_2,5,q_2)_\mathrm{ad}$.
\end{enumerate}
\end{lemma}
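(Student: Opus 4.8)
The plan is to treat each of the five candidate values of $D$ from Lemma \ref{lemroot5} separately, exploiting three facts: the global dimension of a modular fusion category is a totally positive algebraic $d$-number \cite[Corollary 1.4]{codegrees}, \cite[Proposition 7.21.14]{tcat}; $\mathrm{FPdim}(\mathcal{C})$ is the largest Galois conjugate of $D$; and the categorical dimension of each simple object is a Galois conjugate of its Frobenius--Perron dimension, the latter being a totally positive algebraic integer of the real subring $\mathbb{Z}[\phi]$ with $\phi=\tfrac12(1+\sqrt5)$. The backbone is the decomposition of $\mathcal{C}$ into a Deligne product of prime modular fusion categories \cite{tcat}, under which $D$ is multiplicative. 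Since each prime factor has global dimension a totally positive $d$-number strictly greater than $1$ that divides $D$ within $\mathbb{Q}(\sqrt5)$, the classification reduces to determining which prime modular fusion categories can appear as factors.

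I would then classify the admissible prime factors. A prime factor of global dimension $5$ also has $\mathrm{FPdim}=5$, since $5$ is rational and therefore fixed by the Galois conjugation relating categorical and Frobenius--Perron dimensions; a fusion category of prime Frobenius--Perron dimension is pointed \cite{ENO}, so the factor is a metric group $\mathcal{C}(C_5,q)$. A prime factor of global dimension $\tfrac12(5\pm\sqrt5)$ must have rank $2$: its simple Frobenius--Perron dimensions lie in $\mathbb{Z}[\phi]$ and their squares sum to $\mathrm{FPdim}=\tfrac12(5+\sqrt5)$, for which $\{1,\phi\}$ is the only admissible multiset. The rank-$2$ modular classification \cite[Example 5.1.2(iv)]{ostrikremarks}, \cite{ostrik} then identifies it as a Fibonacci category $\mathcal{C}(\mathfrak{sl}_2,5,q)_\mathrm{ad}$, the two spherical structures realizing the conjugate global dimensions $\tfrac12(5+\sqrt5)$ and $\tfrac12(5-\sqrt5)$.

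Reassembling each case then gives the stated families: $D=\tfrac12(5\pm\sqrt5)$ is a single $\mathcal{C}(\mathfrak{sl}_2,5,q)_\mathrm{ad}$; $D=5$ is either the pointed $\mathcal{C}(C_5,q)$ (when $\mathcal{C}$ is prime) or, via the unique nontrivial factorization $5=\tfrac12(5+\sqrt5)\cdot\tfrac12(5-\sqrt5)$ in $\mathbb{Q}(\sqrt5)$, a product $\mathcal{C}(\mathfrak{sl}_2,5,q_1)_\mathrm{ad}\boxtimes\mathcal{C}(\mathfrak{sl}_2,5,q_2)_\mathrm{ad}$ of two rank-$2$ factors of conjugate spherical structure; and $D=\tfrac{5}{2}(3\pm\sqrt5)=\bigl(\tfrac12(5\pm\sqrt5)\bigr)^2$ is a product of two such factors of equal spherical structure. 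Each possibility thus falls into one of items (1)--(3).

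The main obstacle is ruling out an \emph{exotic} prime factor whose global dimension is one of the composite values $\tfrac{5}{2}(3\pm\sqrt5)$, which would block the product decomposition. For either composite value, whose common Frobenius--Perron dimension is $\tfrac{5}{2}(3+\sqrt5)$, I would first pin down the multiset of simple Frobenius--Perron dimensions as $\{1,\phi,\phi,\phi^2\}$ by a finite search over totally positive elements of $\mathbb{Z}[\phi]$ constrained so that both $\tfrac{5}{2}(3+\sqrt5)$ and its conjugate $\tfrac{5}{2}(3-\sqrt5)$ are matched, and then observe that the objects of dimension $\phi$ generate a modular Fibonacci subcategory; by Müger's centralizer theorem \cite{tcat} this subcategory splits off as a Deligne factor, contradicting primality and forcing the product structure. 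Carefully distinguishing categorical from Frobenius--Perron dimension throughout---these coincide for the pointed and Fibonacci factors but differ for their Yang--Lee (non-pseudounitary) counterparts---is the delicate bookkeeping on which the whole factorization rests.
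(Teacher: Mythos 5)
Your overall strategy --- decompose $\mathcal{C}$ into prime factors, classify the admissible prime dimensions, and rule out an exotic prime of global dimension $\tfrac{5}{2}(3\pm\sqrt5)$ --- is a legitimate route and genuinely different from the paper's. The paper handles the three smaller dimensions by citing the classification of fusion categories of small dimension \cite[Example 5.1.2]{ostrikremarks}, and disposes of $\tfrac{5}{2}(3\pm\sqrt5)$ in one line: for $\sigma$ the nontrivial element of $\mathrm{Gal}(\mathbb{Q}(\sqrt5)/\mathbb{Q})$, the product $\mathcal{C}\boxtimes\mathcal{C}^\sigma$ has global dimension $5^2$, and modular fusion categories of global dimension $25$ are already classified \cite[Theorem 4.15]{MR4564492}. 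That single citation replaces the entire ``exotic prime'' analysis you propose to do by hand, which is exactly where your argument is incomplete.

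Concretely, two steps in that analysis have real gaps. First, the ``finite search'' pinning the simple Frobenius--Perron dimensions down to $\{1,\phi,\phi,\phi^2\}$ is run over $\mathbb{Z}[\phi]$, but nothing you state forces the simple dimensions to lie there: they are cyclotomic algebraic integers $\geq 1$, and for example the multiset $\{1,\sqrt{2}\,\phi,\phi^2\}$ also has squares summing to $\tfrac{5}{2}(3+\sqrt5)$ with each square totally positive and the conjugate sum matching $\tfrac{5}{2}(3-\sqrt5)$. Excluding such candidates requires an input you never invoke, e.g.\ that $\mathrm{FPdim}(X)^2$ divides $\mathrm{FPdim}(\mathcal{C})$ in the ring of algebraic integers (which kills $2\phi^2\nmid 5\phi^2$). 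Second, even granting the multiset $\{1,\phi,\phi,\phi^2\}$, the claim that ``the objects of dimension $\phi$ generate a modular Fibonacci subcategory'' is not automatic: if $\mathrm{FPdim}(X)=\phi$ then $X\otimes X^\ast\cong\mathbbm{1}\oplus Y$ with $\mathrm{FPdim}(Y)=\phi$, but $Y$ could a priori be the \emph{other} $\phi$-object, in which case $X$ generates no rank-$2$ subcategory and you must argue with the full rank-$4$ fusion rules before M\"uger's centralizer theorem applies. A smaller correction: the conjugate global dimensions $\tfrac12(5\pm\sqrt5)$ are realized by the two Galois-conjugate Fibonacci/Yang--Lee \emph{fusion categories}, not by the two spherical structures on one of them; the global dimension does not depend on the choice of spherical structure.
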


\begin{proof}
The classifications for $\dim(\mathcal{C})\in\{(1/2)(3-\sqrt{5}),5,(1/2)(3+\sqrt{5})\}$ were completed in \cite[Example 5.1.2]{ostrikremarks} where fusion categories of these dimensions were described.  We will classify modular fusion categories of dimensions $(5/2)(3-\sqrt{5})$ which completes the argument by Galois conjugacy of modular fusion categories.  Note that the algebraic norm of $(5/2)(3-\sqrt{5})$ is $5^2$.  Let $\sigma\in\mathrm{Gal}(\overline{\mathbb{Q}}/\mathbb{Q})$ such that $\sigma(\sqrt{5})=-\sqrt{5}$.  Then $\dim(\mathcal{C})\boxtimes\dim(\mathcal{C}^\sigma)=5^2$ and our claim follows from \cite[Theorem 4.15]{MR4564492}.
\end{proof}

\par Assume now $N=5$.  In this case $\rho'_\mathcal{C}$ contains a simple summand $\nu$ of level divisible by $5$, hence a factor of this simple summand is an irreducible $\mathrm{SL}(2,\mathbb{Z}/5\mathbb{Z})$-representation.  There are four isomorphism classes of such representations with $\leq3$ distinct $t$-eigenvalues by Lemmas \ref{one} and \ref{two}: two are three-dimensional and have exactly $3$ distinct $t$-eigenvalues $1,\zeta,\zeta^{-1}$ for a primitive fifth root of unity $\zeta$; two are two-dimensional and have exactly $2$ distinct $t$-eigenvalues $\zeta,\zeta^{-1}$ for a primitive fifth root of unity $\zeta$.  Thus if $\nu$ contains any other irreducible factors, they are one-dimensional.  Therefore, up to Galois conjugacy of categories, the $t$-spectrum of $\mathcal{C}$ is $\{\epsilon,\epsilon\zeta_5,\epsilon\zeta_5^4\}$ where $\epsilon^{12}=1$.  Note that $\gamma^{-1}\in\{\epsilon,\epsilon\zeta_5,\epsilon\zeta_5^4\}$, so the twists are either $\{1,\zeta_5,\zeta_5^4\}$, $\{1,\zeta_5,\zeta_5^2\}$, or $\{1,\zeta_5^3,\zeta_5^4\}$.  The maximal real subfield of $\mathbb{Q}(\zeta_5)$ is $\mathbb{Q}(\sqrt{5})$, so this is where the dimensions of such a category lie \cite[Theorem 2.7]{MR3486174}.

\par In the first case, using Lemma \ref{lan5}, with $\varphi:=(1/2)(1+\sqrt{5})$ for brevity, we have
\begin{equation}
D_1+D_\theta+D_\eta=D_1^2+D_\theta^2+D_\eta^2+\varphi^{-1}D_1(D_\theta+D_\eta)+2D_\theta D_\eta.
\end{equation}
Factored as a quadratic relation in $D_1$, we have $0=D_1^2+(w\varphi^{-1}-1)D_1+w(w-1)$, where $w:=D_\theta+D_\eta$ for brevity, with solutions
\begin{equation}\label{xsol}
D_1=\frac{1}{2}\left(1-\varphi^{-1}w\pm\sqrt{(w\varphi^{-1}-1)^2-4w(w-1)}\right)
\end{equation}
For $D_1$ to be real we require $(w\varphi^{-1}-1)^2-4w(w-1)\geq0$, thus $(\varphi^{-2}-4)w^2+(4-2\varphi^{-1})w+1\geq0$.  As a quadratic in $w$, one may verify that the above being nonnegative implies $w<2$. and the solutions for $D_1$ in Equation (\ref{xsol}) are maximized when $D_1<2$ as well.  Moreover $D<4$.  Hence $D=(5/2)(3-\sqrt{5})\approx1.910$ by Lemma \ref{lemroot5} and our claim follows from Lemma \ref{propfib}.

\par In the second case, we have
\begin{equation}
D_1+D_\theta+D_\eta=D_1^2+D_\theta^2+D_\eta^2-\varphi D_1D_\eta+\varphi^{-1}D_\theta D_1+\varphi^{-1}D_\theta D_\eta.
\end{equation}
Factored as a quadratic relation in $D_1$, we have
\begin{equation}
0=D_1^2+(\varphi^{-1}D_\theta-\varphi D_\eta-1)D_1+D_\theta^2+D_\eta^2+\varphi^{-1}D_\theta D_\eta-D_\theta-D_\eta
\end{equation}
with solutions
\begin{equation}\label{xsol2}
D_1=\frac{1}{2}\left(1-\varphi^{-1}y+\varphi D_\eta\pm\sqrt{(\varphi^{-1}D_\theta-\varphi D_\eta-1)^2-4(D_\theta^2+D_\eta^2+\varphi^{-1}D_\theta D_\eta-D_\theta-D_\eta)}\right).
\end{equation}
One may verify the discriminant is nonnegative only if $D_\theta<2$ and $D_\eta<7$, and the solutions for $D_1$ in Equation (\ref{xsol2}) are maximized when $D_1<7$, hence $D$ is one of the numbers from Lemma \ref{lemroot5} and our claim follows from Lemma \ref{propfib}.

\par In the third case, we have
\begin{equation}
D_1+D_\theta+D_\eta=D_1^2+D_\theta^2+D_\eta^2-D_1(\varphi D_\theta+\varphi^{-1}D_\eta)+\varphi^{-1}D_\theta D_\eta.
\end{equation}
Factored as a quadratic in $D_1$, we have
\begin{equation}
0=D_1^2-(\varphi D_\theta+\varphi^{-1}D_\eta+1)D_1+D_\theta^2+D_\eta^2+\varphi^{-1}D_\theta D_\eta-D_\theta-D_\eta
\end{equation}
with solutions
\begin{equation}\label{xsol3}
D_1=\frac{1}{2}\left(\varphi D_\theta+\varphi^{-1}D_\eta+1\pm\sqrt{(\varphi D_\eta+\varphi^{-1}D_\eta+1)^2-4(D_\theta^2+D_\eta^2+\varphi^{-1}D_\theta D_\eta-D_\theta-D_\eta)}\right).
\end{equation}
For $D_1$ to be real, we require $D_\theta<6$ and $D_\eta<3$.  The solutions for $D_1$ in Equation (\ref{xsol3}) are maximized when $D_1<6$, hence $D$ is one of the numbers from Lemma \ref{lemroot5} and our claim follows from Lemma \ref{propfib}.


\subsubsection{$N=4$}\label{4sec}

\par The two cases to consider are if $\theta=-1$ and $\eta=\zeta_4$, and if $\theta=-\eta=\zeta_4$ without loss of generality.  The first case is completely described by Lemma \ref{core} and Lemma \ref{minusone} below, which includes arguments that overlap with the needs of Section \ref{8sec}.  The latter case is then treated separately, which contains more interesting sporadic examples.

\begin{lemma}\label{core}
Let $\mathcal{C}$ be a modular fusion category with $\mathrm{FPdim}(\mathcal{C})\in\mathbb{Z}$.  If the twists of $\mathcal{C}$ are $1$, $-1$, and $\zeta$, where $\zeta^8=1$ and $\zeta\neq\pm1$, then the unique minimal representative of the Witt class of $\mathcal{C}$ is a metric group of order $2$ or $4$.
\end{lemma}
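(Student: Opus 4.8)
The plan is to reduce $\mathcal{C}$ to its unique completely anisotropic Witt representative and then bound the order of the underlying group by a Gauss sum computation. First I would record that, since the twists lie in the group of $8$th roots of unity and $\zeta\neq\pm1$, the order $N$ of the $T$-matrix lies in $\{4,8\}$ and in particular is a power of $2$; moreover $\zeta$ is non-real. Combined with the hypothesis $\mathrm{FPdim}(\mathcal{C})\in\mathbb{Z}$, the fact that $\mathrm{FSexp}(\mathcal{C})=N$ is a prime power makes $\mathcal{C}$ nilpotent \cite{MR2383894}, hence weakly group-theoretical. By \cite[Theorem 1.1]{MR3770935}, the unique minimal representative of the Witt class of $\mathcal{C}$ is then braided equivalent to $\mathcal{M}\boxtimes\mathcal{I}$, where $\mathcal{M}$ is a metric group and $\mathcal{I}$ is one of the (possibly trivial) Ising modular fusion categories; being completely anisotropic, this representative is $\mathcal{C}_A^0$ for a maximal connected \'etale algebra $A$ \cite[Theorem 5.13]{MR3039775}, so its twists are inherited from $\mathcal{C}$ and form a subset of $\{1,-1,\zeta\}$.

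Next I would eliminate the Ising factor. Each nontrivial Ising category contains a simple object $\sigma$ of dimension $\sqrt2$ whose twist is a primitive $16$th root of unity, so $\mathbbm{1}\boxtimes\sigma$ would be a simple object of $\mathcal{M}\boxtimes\mathcal{I}$ whose twist is a primitive $16$th root of unity; this is impossible since all twists of the representative are $8$th roots of unity. Hence $\mathcal{I}$ is trivial and the minimal representative is a metric group $\mathcal{C}(G,q)$. Because this metric group is completely anisotropic, $q(g)\neq1$ for every nonzero $g\in G$, and because a nonzero element of odd order would have $q$-value of odd order and hence $q(g)=1$, the group $G$ is a $2$-group. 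Finally, since the central charge $\xi$ is a Witt invariant and the non-reality of $\zeta$ forces $\tau_1=D_1-D_{-1}+D_\zeta\zeta\notin\mathbb{R}$ (as $D_\zeta>0$), we have $\xi\neq1$; thus the representative is nontrivial and $|G|>1$.

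It then remains to bound $|G|$. Writing $a,b,c$ for the number of elements of $G$ with $q$-value $1,-1,\zeta$ respectively, anisotropy gives $a=1$, so $\tau_1=1-b+c\zeta$, while $\tau_1\tau_{-1}=|\tau_1|^2=|G|=1+b+c$ by \cite[Proposition 8.15.4]{tcat}. I would expand $|1-b+c\zeta|^2$: when $\zeta$ is a primitive $8$th root the cross term contributes $(1-b)c\sqrt2$, and integrality of $|G|$ forces $(1-b)c=0$, leaving the two subcases $c=0$ (giving $b=3$, the metric group $C_2^2$ with all nontrivial twists $-1$) and $b=1$ (giving $c=2$, the metric group $C_4$), both of order $4$. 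When $\zeta=\pm\zeta_4$ the analogous real equation $(1-b)^2+c^2=1+b+c$, together with $|G|$ being a power of $2$, admits only the anisotropic solutions $(b,c)=(0,1)$ of order $2$ (a semion) and $(b,c)\in\{(1,2),(3,0)\}$ of order $4$. In every case $|G|\in\{2,4\}$, completing the proof.

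The main obstacle I anticipate is the first step: pinning down the precise form of the nilpotency (equivalently weak group-theoreticity) statement that simultaneously uses $\mathrm{FPdim}(\mathcal{C})\in\mathbb{Z}$ and $\mathrm{FSexp}(\mathcal{C})$ being a power of $2$, and then justifying carefully that the completely anisotropic representative inherits its twists as a subset of those of $\mathcal{C}$. By comparison, the Gauss sum bound in the last step is routine finite arithmetic once anisotropy has pinned $a=1$ and confined $G$ to be a $2$-group.
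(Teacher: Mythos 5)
Your proof is correct and follows essentially the same route as the paper: integrality plus the Cauchy-type constraint forces $D=2^m$, hence nilpotency and weak group-theoreticity, then \cite[Theorem 1.1]{MR3770935} gives a completely anisotropic Witt representative $\mathcal{M}\boxtimes\mathcal{I}$, the Ising factor is excluded by its primitive $16$th-root twists, and nontriviality follows from $\xi\neq1$. The only divergence is the last step, where the paper simply cites the classification of completely anisotropic metric $2$-groups \cite[Appendices A.3--A.5]{DGNO}, whereas you rederive the bound $|G|\in\{2,4\}$ by hand from anisotropy ($a=1$) and the Gauss sum identity $|\tau_1|^2=|G|$; this substitution is valid (the spurious solutions such as $(b,c)=(2,2)$ are correctly eliminated by the $2$-group constraint) and makes the argument more self-contained.
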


\begin{proof}
We must have $D=2^m$ for some $m\in\mathbb{Z}_{\geq1}$, therefore $\mathcal{C}$ is nilpotent, hence weakly group-theoretical.  By \cite[Theorem 1.1]{MR3770935}, $\mathcal{C}$ is Witt equivalent to a product $\mathcal{B}\boxtimes\mathcal{D}$, where $\mathcal{D}$ is pointed and completely anisotropic, and $\mathcal{B}$ is an Ising modular fusion category or trivial.  We must have $\mathcal{B}\boxtimes\mathcal{D}$ is nontrivial because it is not possible $\xi=1$ with the given twists; we must have $\mathcal{B}$ trivial since all Ising modular fusion categories have twists which are $16$th roots of unity.  The classification of completely anisotropic metric groups \cite[Appendices A.3--A.5]{DGNO} completes our argument, with consideration to the restrictions on the twists of $\mathcal{C}$.
\end{proof}

\begin{lemma}\label{minusone}
Let $\mathcal{C}$ be a modular fusion category with $\mathrm{FPdim}(\mathcal{C})\in\mathbb{Z}$.  If the twists of $\mathcal{C}$ are $1$, $-1$, and $\zeta$, where $\zeta^8=1$ and $\zeta\neq\pm1$, then $\mathcal{C}$ is a metric group of order $4$.
\end{lemma}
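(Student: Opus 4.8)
The plan is to upgrade the Witt-theoretic conclusion of Lemma~\ref{core} into an honest identification of $\mathcal{C}$. By Lemma~\ref{core}, $\mathcal{C}$ is Witt equivalent to the unique completely anisotropic representative $\mathcal{M}$ of its Witt class, a metric group of order $2$ or $4$; equivalently there is a connected \'etale algebra $A$ with $\mathcal{C}_A^0\simeq\mathcal{M}$ and $D=\dim(A)^2\dim(\mathcal{M})$. The first observation is that an order-$2$ representative can only be a semion or antisemion, whose twists $\{1,\zeta_4\}$ or $\{1,\zeta_4^3\}$ number only two; since the twists of a local module category are inherited from $\mathcal{C}$, such an $\mathcal{M}$ is incompatible with the three-twist hypothesis as soon as one knows $\mathcal{C}\simeq\mathcal{M}$. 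Thus it suffices to prove that $A$ is trivial, i.e.\ that $\mathcal{C}$ is itself completely anisotropic: then $\mathcal{C}\simeq\mathcal{M}$ is pointed of order exactly $4$, and the classification of completely anisotropic metric $2$-groups in \cite[Appendices A.3--A.5]{DGNO}, restricted by the prescribed twists and first multiplicative central charge $\xi$, identifies it.

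To cut the problem down to finitely many candidates I would first constrain the global dimension. Specializing Lemma~\ref{lan5} to $\theta=-1$, $\eta=\zeta$ and using $\theta+\theta^{-1}=-2$ together with $\theta\eta^{-1}+\theta^{-1}\eta=-(\eta+\eta^{-1})$ collapses the relation to
\begin{equation*}
D=(D_1-D_{-1})^2+D_\zeta^2+(\zeta+\zeta^{-1})D_\zeta(D_1-D_{-1}),
\end{equation*}
which is exactly the identity $|\tau_1|^2=D$ for $\tau_1=(D_1-D_{-1})+D_\zeta\zeta$. When $\zeta$ is a primitive eighth root the term $\zeta+\zeta^{-1}=\pm\sqrt2$ is irrational, so $D=2^m$ already forces $D_1=D_{-1}$ and $D=D_\zeta^2$; when $\zeta=\zeta_4$ the relation degenerates to $(D_1-D_{-1})^2+D_\zeta^2=D$. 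In either case, combining $D=2^m$ with integrality and with the rank bound obtained by feeding the admissible irreducible summands of $\rho_\mathcal{C}'$ from Lemmas~\ref{one} and~\ref{two} into \cite[Lemma 5.2.2]{MR4834521} should leave only a short list of multiplicity vectors $(D_1,D_{-1},D_\zeta)$, the intended one being $(1,1,2)$ with $D=4$, which is forced to be pointed of rank $4$.

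The crux, and the step I expect to be hardest, is eliminating the surviving candidates with $D>4$ that are numerically consistent but would describe a non-anisotropic $\mathcal{C}$ reducing to a semion or to the order-$4$ core -- the prototype being $(D_1,D_{-1},D_\zeta)=(4,2,2)$ with $D=8$ and $\xi=\zeta_8$ in the $\zeta=\zeta_4$ case. Rather than argue numerically, I would argue structurally: a nontrivial $A$ has all simple summands in $\mathcal{C}_1$, yielding a nontrivial invertible $g$ with $\theta_g=1$, and tensoring by $g$ sends $\theta_X\mapsto M_{g,X}\theta_X$ where $M_{g,X}\in\{1,-1\}$ is the double-braiding (monodromy) scalar. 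Since $-\zeta\notin\{1,-1,\zeta\}$, no object of twist $\zeta$ can be moved off the twist set, so $M_{g,X}=1$ for every $X\in\mathcal{C}_\zeta$; playing this orthogonality of $\mathcal{C}_\zeta$ against nondegeneracy of the braiding -- equivalently, classifying which isotropic subgroups of a metric group with value set $\{1,-1,\zeta\}$ leave $\mathcal{C}_\zeta$ a single $\beta$-orthogonal block -- should show any such $g$ is transparent and hence $A$ trivial. This mechanism uses only $-\zeta\neq\pm1,\zeta$ and is therefore uniform in $\zeta$, simultaneously handling the primitive fourth-root case of Section~\ref{4sec} and the primitive eighth-root case needed in Section~\ref{8sec}.
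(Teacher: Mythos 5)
Your overall strategy---prove that $\mathcal{C}$ is completely anisotropic and then read the answer off Lemma~\ref{core}---is a legitimate alternative to the paper's route, and your monodromy observation is correct and genuinely useful: for an order-$2$ invertible $g$ with $\theta_g=1$ one has $\theta_{g\otimes X}=M_{g,X}\theta_X$ with $M_{g,X}\in\{\pm1\}$, and since $-\zeta\notin\{1,-1,\zeta\}$ this forces $M_{g,X}=1$ for all $X\in\mathcal{C}_\zeta$. But the proof has a genuine gap exactly at the step you flag as hardest: orthogonality of $g$ with $\mathcal{C}_\zeta$ does not make $g$ transparent. Nondegeneracy only requires \emph{some} simple object to have nontrivial monodromy with $g$, and nothing you have established prevents that object from lying in $\mathcal{C}_1\cup\mathcal{C}_{-1}$, since tensoring with $g$ may exchange twist-$1$ and twist-$(-1)$ objects while preserving the twist set. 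Concretely, at $D=16$ the multiplicity vector $(D_1,D_{-1},D_\zeta)=(6,6,4)$ survives your constraint from Lemma~\ref{lan5}, and the configuration in which $g^{\perp}$ consists of $\{0,g\}$, two elements of twist $-1$, and all of $\mathcal{C}_\zeta$, with the remaining eight elements paired off by translation by $g$ into $(1,-1)$-pairs, is consistent with everything your mechanism gives. Eliminating it is the real content of the lemma; the paper does this by first pinning the multiplicities via \cite[Lemma 5.2]{MR3997136} and then analyzing orders of invertible objects and quadratic forms on $C_4$, together with a brute-force check of rank-$8$ modular data for the $D=8$ case. Your phrase ``classifying which isotropic subgroups leave $\mathcal{C}_\zeta$ a single orthogonal block'' names that computation but does not perform it, and it does not follow from the monodromy constraint alone.

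There are two further gaps upstream. First, your assertion that a nontrivial connected \'etale algebra $A$ yields a nontrivial \emph{invertible} $g$ of order $2$ with $\theta_g=1$ is unjustified unless you already know $\mathcal{C}$ is pointed: the simple summands of $A$ have trivial twist, but they need not be invertible. The paper supplies this reduction using \cite{czenky2024frobeniusschurexponentbounds} to pass to pointed categories and \cite[Proposition 8.2]{solvable} to extract order-$2$ invertibles from maximal Tannakian subcategories; your argument needs one of these (or a substitute) before it can begin, and without it the claimed equivalence ``no such $g$ exists $\Rightarrow$ $\mathcal{C}$ completely anisotropic'' is also circular, since you invoke the metric-group picture of isotropic subgroups before knowing $\mathcal{C}$ is pointed. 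Second, the numerical step is weaker than advertised: for $\zeta=\zeta_4$ the relation from Lemma~\ref{lan5} reads $(D_1-D_{-1})^2+D_\zeta^2=2^m$, which admits $D=16$ with $(6,6,4)$ as above, so no ``short list'' forcing $D=4$ emerges from the numerics; the elimination must be structural, and that structural work is what is missing.
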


\begin{proof}
Let $\mathrm{Rep}(G)\subset\mathcal{C}$ be a maximal Tannakian fusion subcategory which, if nontrivial, contains an invertible object $X$ of order $2$ by \cite[Proposition 8.2]{solvable}, for example.  The category of local modules over the regular algebra generated by $X$ is a modular fusion category satisfying the hypotheses of our claim, or is dimension $2$ since $\xi\not\in\mathbb{R}$.  Repeating this process a finite number of times produces one of the pointed modular fusion categories from Lemma \ref{core}.  Therefore it suffices to prove there do not exist modular fusion categories $\mathcal{D}$ satisfying the three conditions (1) $\dim(\mathcal{D})\in\{8,16\}$, (2) $\theta_X\in\{1,-1,\zeta\}$ for all simple $X\in\mathcal{D}$, and (3) $\mathcal{D}_A^0$ is braided equivalent to one of the pointed modular fusion categories in Lemma \ref{core}, where $A$ is the regular algebra of a Tannakian fusion subcategory of dimension $2$.
\par Proposition 1 of \cite{czenky2024frobeniusschurexponentbounds} implies we need to prove there does not exist a \emph{pointed} modular fusion category $\mathcal{D}$ satisfying (1), (2), and (3) from the preceding paragraph.  The case when $\dim(\mathcal{D})=8$ is immediately eliminated by perusing all possible modular data of rank $8$ \cite[Appendix D.7]{ng2023classificationmodulardatarank}.  When $\dim(\mathcal{D})=16$, the twists of simple objects are determined by \cite[Lemma 5.2]{MR3997136}: $\zeta$ with multiplicity $4$ and $\pm1$ with multiplicity $6$ each.  We first claim that if $X$ is an invertible object of order $2$, then $\theta_X=\pm1$.  Indeed, $X$ $\otimes$-generates a braided fusion subcategory of dimension $2$; if $\theta_X$ has order $4$, then $\mathcal{D}$ factors as a product violating the assumptions about the twists and there are no such dimension $2$ pointed categories with $\theta_X$ order $8$.  Now if there exists $X$ of order $4$ such that $\theta_X=1$, then $\theta_{X\otimes X}=\pm1$ and $\theta_{X^\ast}=1$.  There is no quadratic form on $C_4$ taking the values $1,1,1,-1$, hence $X$ $\otimes$-generates a Tannakian fusion subcategory of dimension $4$ and therefore $\mathcal{D}$ is a twisted double of a finite group, violating Lemma \ref{core}.  Moreover all simple objects of twist $1$ have order $2$.  Take any two such $X,Y$ of order $2$ and trivial twist.  If $\theta_{X\otimes Y}=-1$, then $X$ and $Y$ $\otimes$-generate a modular fusion subcategory of $\mathcal{D}$ and therefore $\mathcal{D}$ factors, in opposition to the assumed twists.  Alternatively, $\theta_{X\otimes Y}=1$ and $X$ and $Y$ $\otimes$-generate a Tannakian fusion subcategory of $\mathcal{D}$ and once again $\mathcal{D}$ is a twisted double of a finite group, violating Lemma \ref{core}.
\end{proof}

\par This completes the case $\theta=-1$ and $\eta$ has order four, since the hypothesis $\mathrm{FPdim}(\mathcal{C})\in\mathbb{Z}$ is automatic.  We will repeat the argument of Lemma \ref{minusone} in the case $\theta=-\eta=\zeta_4$ without loss of generality, but to cut down on the amount of brute-force searching through finite group modular data, we will prove the following ancillary results.  

\begin{lemma}\label{lastwan}
Let $G$ be a finite group and $\omega\in H^3(G,\mathbb{C}^\times)$ such that $\mathcal{Z}:=\mathcal{Z}(\mathrm{Vec}_G^\omega)$ has exactly three distinct twists $1$, $\theta$, and $\eta$.  Then either $G$ has exponent $3$ and the set of twists is $\{1,\zeta_3,\zeta_3^2\}$, or $G$ has exponent $2$ or $4$ and the set of twists is $\{1,\zeta_4,\zeta_4^3\}$.
\end{lemma}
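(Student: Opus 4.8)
The plan is to combine three structural features of a double $\mathcal{Z}:=\mathcal{Z}(\mathrm{Vec}_G^\omega)$ --- trivial central charge $\xi=1$, integrality, and perfect-square global dimension $D=|G|^2$ (Section \ref{subsecdouble}) --- with the classification of three-twist modular data already assembled in Figure \ref{fig:eightthree}, Lemmas \ref{atemf} and \ref{twelvemf}, and Sections \ref{3sec}--\ref{7sec}. Since $N=\mathrm{FSexp}(\mathcal{Z})$ is forced into the list $\{3,4,5,6,7,8,16\}$, it suffices to eliminate all but $N\in\{3,4\}$ with the appropriate twist sets and then read off $\exp(G)$.

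First I would use $\xi=1$. As $\tau_1=\sqrt{D}$ is real and positive, its imaginary part $D_\theta\sin(\arg\theta)+D_\eta\sin(\arg\eta)$ must vanish while $D_\theta,D_\eta>0$; hence no admissible twist set can contain exactly one non-real twist. This immediately discards the $N=8$ set $\{1,-1,\eta\}$ with $\eta$ a primitive $8$th root of unity from case (2) of Lemma \ref{atemf} and the $N=4$ sets $\{1,-1,\zeta_4\}$, $\{1,-1,\zeta_4^3\}$, leaving for $N=4$ only the conjugate-balanced set $\{1,\zeta_4,\zeta_4^3\}$ (with $D_{\zeta_4}=D_{\zeta_4^3}$). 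Next I would invoke integrality: the Ising categories ($N=16$), the categories $\mathcal{C}(\mathfrak{sl}_2,7,q)_\mathrm{ad}$ ($N=7$), and the non-rational $N=5$ categories of Lemma \ref{propfib} all fail to be integral, so none is a double, and the case $N=6$ is empty by Section \ref{6sec}. The only remaining integral $N=5$ category is the pointed $\mathcal{C}(C_5,q)$ of dimension $5$, which is not a perfect square and so cannot equal $|G|^2$. This leaves precisely $N=3$ with twists $\{1,\zeta_3,\zeta_3^2\}$ and $N=4$ with twists $\{1,\zeta_4,\zeta_4^3\}$.

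It then remains to recover $\exp(G)$ from the twist set. In either case every prime dividing $|G|$ divides $N$: by the explicit twist formula for twisted doubles \cite{MR1770077,MR4257620}, an element $g\in G$ together with an $\omega$-projective character $\rho$ of its centralizer contributes a twist $\theta_{([g],\rho)}$ whose $\mathrm{ord}(g)$-th power is an $\mathrm{ord}(g)$-th root of unity determined by $\omega|_{\langle g\rangle}$, so $\mathrm{ord}(\theta_{([g],\rho)})$ shares its prime divisors with $\mathrm{ord}(g)$ and, as $\rho$ varies, attains an order divisible by $\mathrm{ord}(g)$. Thus $G$ is a $3$-group when the twists are $\{1,\zeta_3,\zeta_3^2\}$ and a $2$-group when they are $\{1,\zeta_4,\zeta_4^3\}$; moreover an element of order $9$ (respectively $8$) would force a twist of order exceeding $3$ (respectively $4$), contradicting the twist set and, in the first case, the fact that $9$ never divides the order of $t$ noted at the start of Section \ref{secsecthree}. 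Equivalently, one may cite the divisibility $\exp(G)\mid\mathrm{FSexp}(\mathrm{Vec}_G^\omega)=N$ from \cite{MR2313527}. Therefore $\exp(G)=3$ in the first case and $\exp(G)\in\{2,4\}$ in the second, the latter nontrivial because $\mathcal{Z}$ has nontrivial twists.

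The crux, and the step I expect to require the most care, is this last one in the presence of a nontrivial associator $\omega$: one must check that $\omega$ neither introduces spurious prime divisors into the twist orders nor permits a higher exponent while keeping every twist of order dividing $4$ (respectively $3$). The clean resolution is the character-theoretic twist formula for $\mathcal{Z}(\mathrm{Vec}_G^\omega)$ together with the observation that $\omega$ contributes only an $\mathrm{ord}(g)$-torsion correction to $\theta_{([g],\rho)}^{\mathrm{ord}(g)}$, which preserves both the relevant primes and the divisibility $\exp(G)\mid N$. A minor additional point is that the elimination of $N=8$ and $N=16$ uses only the possible twist \emph{sets} rather than category counts, which Lemmas \ref{atemf} and \ref{twelvemf} and the explicit $t$-spectra of Sections \ref{5sec}--\ref{7sec} already supply, so no circularity with the still-open $N=4$ enumeration is introduced.
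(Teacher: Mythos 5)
Your proof is correct, but it takes a genuinely different route from the paper's. The paper argues locally and from scratch: it applies the zero trace condition (Equation (\ref{zerotrace})) to $I(X)$ for a simple $X\in\mathrm{Vec}_G^\omega$ of order $2$ (when $G$ has even order), deduces from the vanishing sum $a+b\theta+c\eta=0$ that either $\theta=-\eta=\zeta_4$ or $\theta=-1$ --- the latter then refuted by applying the zero trace condition to a constituent of $F(Y)$ for $Y$ of twist $\eta$ --- and in the odd-order case eliminates $\{1,\zeta_5,\zeta_5^4\}$ and $\{1,\zeta_5^2,\zeta_5^3\}$ because no nonnegative integer combination of only three fifth roots of unity can vanish. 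You instead run the global classification backwards: starting from the admissible list $N\in\{3,4,5,6,7,8,16\}$ of Figure \ref{fig:eightthree}, you discard everything but $N\in\{3,4\}$ with the balanced twist sets using three structural features of doubles, namely $\xi=1$ (killing every twist set with exactly one non-real twist, hence case (2) of Lemma \ref{atemf} and the sets $\{1,-1,\zeta_4^{\pm1}\}$), integrality (killing Ising, $\mathcal{C}(\mathfrak{sl}_2,7,q)_\mathrm{ad}$, and the irrational $N=5$ categories), and $D=|G|^2$ (killing $\mathcal{C}(C_5,q)$). Your dependency bookkeeping is sound: Lemmas \ref{atemf}, \ref{twelvemf}, \ref{propfib} and Sections \ref{5sec}--\ref{7sec} all precede Lemma \ref{lastwan} and do not use it, so there is no circularity. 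The trade-off is that the paper's argument is self-contained and would survive any reorganization of Section \ref{secsecthree}, while yours is shorter given what is already proved and isolates exactly which properties of doubles do the work; the final step, $\exp(G)\mid\mathrm{FSexp}(\mathrm{Vec}_G^\omega)=N$ via \cite[Theorem 9.2]{MR2313527}, coincides in both proofs.
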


\begin{proof}
Let $X\in\mathrm{Vec}_G^\omega$ be simple object of order $2$, if it exists.  Then $0=\mathrm{Tr}(\theta_{I(X)})=a+b\theta+c\eta$ for some $a,b,c\in\mathbb{Z}$.  If $b$ and $c$ were both nonzero, then $\theta^2=\eta^2$ \cite[Proposition 3.8]{MR4655273} and therefore $\theta=-\eta$ as $1,\theta,\eta$ must be distinct.  Moreover $\theta=-\eta=\zeta_4$ \cite[Corollary 3.10]{MR4655273}.  Otherwise, $c=0$ without loss of generality and $b\neq0$ since $X\not\cong\mathbbm{1}$ \cite[Note 3.9]{MR4655273}.  Thus $\theta=-1$ in this case and $\eta\neq\pm1$ is not real.  Let $Y\in\mathcal{Z}$ be any simple object with $\theta_Y=\eta$.  Then for any $X'\subset F(Y)$, we have $0=\mathrm{Tr}(\theta_{I(X')})=a-b+c\eta$ with $c\neq0$, which cannot occur since $\eta=(b-a)/c\in\mathbb{R}$.  We conclude that if there exists an element of order $2$ in $G$, then the twists of $\mathcal{Z}$ are $\{1,\zeta_4,\zeta_4^3\}$.  Moreover, $G$ is a finite group of exponent $2$ or $4$ in this case \cite[Theorem 9.2]{MR2313527}.  Alternatively, $G$ is a group of odd order.  Since $\xi$ is trivial, the twists are closed under square Galois conjugacy and include $1$.  The order of $G$ being odd implies all twists of $\mathcal{Z}$ have odd order which leaves only the possibilities $\{1,\zeta_3,\zeta_3^2\}$, $\{1,\zeta_5,\zeta_5^4\}$, or $\{1,\zeta_5^2,\zeta_5^3\}$.  The latter two cases are impossible, using the same argument as above: if $Y\in\mathcal{Z}$ is simple with $\theta_Y$ a primitive fifth root of unity, then $\mathrm{Tr}(I(X))$ is a $\mathbb{Z}_{\geq0}$-linear combination of only three distinct fifth roots of unity for any simple $X\subset F(Y)$, which cannot vanish.  This completes the argument.
\end{proof}

\begin{lemma}\label{anteriorpost}
Assume $G$ is a finite $2$-group and $\mathcal{Z}:=\mathcal{Z}(\mathrm{Vec}_G^\omega)$ has twists $1$, $\theta=\zeta_4$ and $\eta=\zeta_4^3$.  Then for all simple $X\in\mathcal{Z}$, $\theta_X=1$ if and only if $X$ lies in the canonical Lagrangian subcategory $\mathrm{Rep}(G)\subset\mathcal{Z}$, i.e.\ $X\subset I(\mathbbm{1})$. 
\end{lemma}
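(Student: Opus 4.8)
The plan is to prove the harder ``only if'' direction by a global dimension count, exploiting two features of the situation: the nontrivial twists $\zeta_4$ and $\zeta_4^3$ are \emph{purely imaginary}, and the double $\mathcal{Z}$ has trivial central charge. The ``if'' direction is immediate and I would dispatch it first: the canonical Lagrangian subcategory $\mathrm{Rep}(G)\subset\mathcal{Z}$ is Tannakian, hence symmetric with trivial ribbon structure, so every simple summand of $I(\mathbbm{1})$ has twist $1$. Writing $\mathcal{Z}_1:=\{X\in\mathcal{O}(\mathcal{Z}):\theta_X=1\}$, this yields the inclusion $\mathcal{O}(\mathrm{Rep}(G))\subseteq\mathcal{Z}_1$, and consequently $\dim(\mathrm{Rep}(G))=\sum_{X\subset I(\mathbbm{1})}\dim(X)^2\leq\sum_{X\in\mathcal{Z}_1}\dim(X)^2=D_1$, where every summand is a strictly positive real since categorical dimensions in a spherical fusion category are nonzero reals.

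Next I would compute the first Gauss sum $\tau_1$ in two ways. On one hand, since $\mathcal{Z}=\mathcal{Z}(\mathrm{Vec}_G^\omega)$ is a double it has trivial central charge $\xi=1$ and global dimension $D=|G|^2$, so $\tau_1=\xi\sqrt{D}=|G|$ is a positive real number. On the other hand, the definition of $\tau_1$ together with the hypothesis that the three twists are $1,\zeta_4,\zeta_4^3$ gives $\tau_1=D_1+D_\theta\zeta_4+D_\eta\zeta_4^3=D_1+i(D_\theta-D_\eta)$. Comparing real parts of these two expressions forces $D_1=|G|$ (and incidentally $D_\theta=D_\eta$, which is not needed).

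Finally, because $\mathrm{Rep}(G)$ is Lagrangian we have $\dim(\mathrm{Rep}(G))=\sqrt{\dim(\mathcal{Z})}=|G|$, so the inequality from the first step becomes $|G|\leq D_1=|G|$. This is an equality between sums of strictly positive terms indexed by the nested sets $\mathcal{O}(\mathrm{Rep}(G))\subseteq\mathcal{Z}_1$; no strictly positive term can be omitted, so the index sets must coincide and $\mathcal{Z}_1=\mathcal{O}(\mathrm{Rep}(G))$, which is exactly the claim. There is no genuinely hard computation here: the proof is essentially a dimension count, and the only real care needed is in the bookkeeping that isolates $D_1$. The key insight to locate is that the purely imaginary values of the two nontrivial twists make $D_1$ equal to the \emph{real part} of a Gauss sum whose modulus and phase are already pinned down by triviality of the central charge; once $D_1=|G|=\dim(\mathrm{Rep}(G))$ is confirmed, the nested positive sums do the rest.
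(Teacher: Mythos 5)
Your proposal is correct and follows essentially the same route as the paper: the paper's proof also equates $|G|=\dim(\mathrm{Rep}(G))$ with the Gauss sum $D_1+D_\theta\zeta_4+D_\eta\zeta_4^3$ (using triviality of the central charge of the double) and reads off $D_1=|G|$ from the real part, whence $\mathcal{Z}_1$ must coincide with $\mathcal{O}(\mathrm{Rep}(G))$. Your write-up merely makes explicit the nested-positive-sums bookkeeping that the paper leaves implicit.
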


\begin{proof}
The first multiplicative central charge of $\mathcal{Z}$ is $1$, hence
\begin{equation}
|G|=\dim(\mathrm{Rep}(G))=\sum_{X\in\mathcal{O}(\mathcal{Z})}\dim(X)^2\theta_X=D_1+D_\theta\zeta_4+D_\eta\zeta_4^3.
\end{equation}
Therefore $D_\theta=D_\eta$ and the result is proven as $D_1=|G|$.
\end{proof}

\begin{lemma}\label{anterior}
Assume $G$ is a finite $2$-group and $\mathcal{Z}:=\mathcal{Z}(\mathrm{Vec}_G^\omega)$ has twists $1$, $\theta=\zeta_4$ and $\eta=\zeta_4^3$.  The simple summands of the canonical Lagrangian subcategory $\mathrm{Rep}(G)\subset\mathcal{Z}$ are self-dual. 
\end{lemma}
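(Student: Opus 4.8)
The plan is to prove the stronger statement that every simple $X$ in the canonical Lagrangian $\mathcal{L}:=\mathrm{Rep}(G)\subset\mathcal{Z}$ has a \emph{real} $S$-matrix row, from which self-duality follows at once. The tools I would use are entirely standard: the balancing (Hopf-link) formula $S_{X,Y}=\theta_X^{-1}\theta_Y^{-1}\sum_{Z\in\mathcal{O}(\mathcal{Z})}N_{XY}^{Z}\dim(Z)\theta_Z$ \cite{tcat}, the identity $S_{X^\ast,Y}=\overline{S_{X,Y}}$, and the invertibility of $S$. First I would fix a simple $X\in\mathcal{L}$; since $\mathcal{L}$ is Tannakian we have $\theta_X=1$, and by Lemma \ref{anteriorpost} a simple object lies in $\mathcal{L}$ precisely when its twist is $1$, so every simple $Y\notin\mathcal{L}$ satisfies $\theta_Y\in\{\zeta_4,\zeta_4^3\}$. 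For $Y\in\mathcal{L}$ the braiding restricted to $\mathcal{L}$ is symmetric and one gets the real value $S_{X,Y}=\dim(X)\dim(Y)$, so the whole content is in the case $Y\notin\mathcal{L}$.

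The key observation I would isolate is that $X\otimes Y$ contains no constituent from $\mathcal{L}$ when $X\in\mathcal{L}$ and $Y\notin\mathcal{L}$. This is a fusion-subcategory absorption fact: for $Z\in\mathcal{L}$ one has $N_{XY}^{Z}=\dim\mathrm{Hom}(Y,X^\ast\otimes Z)=0$, because $X^\ast\otimes Z\in\mathcal{L}$ while the simple object $Y$ lies outside the fusion subcategory $\mathcal{L}$. Consequently every $Z$ occurring in the balancing sum for $S_{X,Y}$ has $\theta_Z\in\{\zeta_4,\zeta_4^3\}$, whence $\sum_{Z}N_{XY}^{Z}\dim(Z)\theta_Z=(B-C)\zeta_4$ is purely imaginary, where $B$ and $C$ are the nonnegative integer totals of $N_{XY}^{Z}\dim(Z)$ over the $Z$ with $\theta_Z=\zeta_4$ and $\theta_Z=\zeta_4^3$ respectively. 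Dividing by $\theta_Y\in\{\zeta_4,\zeta_4^3\}$, which is itself purely imaginary, produces $S_{X,Y}\in\mathbb{R}$. This is exactly the step that consumes the hypothesis: the non-unit twists are $\pm\zeta_4$, so $\theta_Y^{-1}$ is imaginary; were $-1$ an admissible twist the factor $\theta_Y^{-1}$ could be real and the argument would collapse (indeed the excluded cases, such as untwisted doubles of $C_4$, genuinely have non-self-dual Lagrangian objects and a twist of $-1$).

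Combining the two cases gives $S_{X,Y}\in\mathbb{R}$ for \emph{every} simple $Y$, so $S_{X,Y}=\overline{S_{X,Y}}=S_{X^\ast,Y}$ for all $Y$; since distinct simple objects index linearly independent rows of the invertible matrix $S$, this forces $X\cong X^\ast$, and as $X\in\mathcal{L}$ was arbitrary the lemma follows. I expect the only point needing care to be the absorption identity $N_{XY}^{Z}=0$ for $Z\in\mathcal{L}$, $Y\notin\mathcal{L}$, and the bookkeeping showing the balancing sum is purely imaginary; everything else is formal. It is worth noting that this route sidesteps any explicit computation with the group-theoretical modular-data formulas for $\mathcal{Z}(\mathrm{Vec}_G^\omega)$, replacing the representation-theoretic claim ``every irreducible character of $G$ is real'' by a one-line reality argument inside the modular category.
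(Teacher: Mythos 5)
Your proof is correct and takes essentially the same route as the paper: both apply the balancing formula together with the absorption property of the fusion subcategory $\mathrm{Rep}(G)$ and Lemma \ref{anteriorpost} to show that every $S$-matrix row indexed by a simple object of $\mathrm{Rep}(G)$ is real, from which self-duality follows. The only difference is that you make explicit the final step ($S_{X^\ast,Y}=\overline{S_{X,Y}}$ plus invertibility of $S$), which the paper leaves implicit.
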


\begin{proof}
Let $Y\in\mathcal{O}(\mathrm{Rep}(G))$, and $Z\in\mathcal{O}(\mathcal{Z})\setminus\mathrm{Rep}(G)$ have $\theta_Z=\zeta_4$ without loss of generality.  We have $S_{Y,Z}=\zeta_4^3\sum_{W\subset Y\otimes Z}N_{Y,Z}^W\dim(W)\theta_W$ \cite[Proposition 8.13.8]{tcat}.  But $\mathrm{Rep}(G)$ is a fusion subcategory, thus $N_{Y,Z}^W=0$ unless $\theta_W\in\{\zeta_4,\zeta_4^3\}$ by Lemma \ref{anteriorpost}.  Therefore $S_{Y,Z}\in\mathbb{R}$ for all such simple $Z$, and $S_{Y,Z}=\dim(Y)\dim(Z)\in\mathbb{R}$ for all $Z\in\mathrm{Rep}(G)$ as $\mathrm{Rep}(G)$ is Tannakian.
\end{proof}

\begin{lemma}\label{c2}
Assume $G$ is a finite $2$-group, $G\not\cong C_2$, and $\mathcal{Z}:=\mathcal{Z}(\mathrm{Vec}_G^\omega)$ has twists $1$, $\theta=\zeta_4$ and $\eta=\zeta_4^3$.  If $X\in\mathcal{Z}$ is invertible,  then $X^2\cong\mathbbm{1}$ if and only if $\theta_X=1$.
\end{lemma}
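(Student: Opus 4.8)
The plan is to prove the two implications separately, handling the forward direction with the two preceding lemmas and reserving the factorization machinery for the converse, which is where the hypothesis $G\not\cong C_2$ is genuinely needed.

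For the implication $\theta_X=1\Rightarrow X^2\cong\mathbbm{1}$ I would simply chain Lemmas \ref{anteriorpost} and \ref{anterior}. Since invertible objects are simple, Lemma \ref{anteriorpost} places any invertible $X$ with $\theta_X=1$ inside the canonical Lagrangian subcategory $\mathrm{Rep}(G)\subset\mathcal{Z}$, and Lemma \ref{anterior} asserts that every simple summand of $\mathrm{Rep}(G)$ is self-dual. Thus $X\cong X^\ast$, and as $X$ is invertible we have $X^\ast\cong X^{-1}$, so $X\otimes X\cong\mathbbm{1}$. This direction uses no new input and is insensitive to whether $G\cong C_2$.

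For the converse I would argue by contradiction. Assume $X^2\cong\mathbbm{1}$ but $\theta_X\neq1$; since the only twists of $\mathcal{Z}$ are $1,\zeta_4,\zeta_4^3$, this forces $\theta_X\in\{\zeta_4,\zeta_4^3\}$, and by complex conjugacy of modular fusion categories I may take $\theta_X=\zeta_4$. The object $X$ then $\otimes$-generates a pointed subcategory supported on $\langle X\rangle\cong C_2$ with quadratic form $q$ satisfying $q(X)=\zeta_4$; because the self-monodromy $b(X,X)=\theta_X^2=-1$ is nontrivial, this subcategory is nondegenerate, hence a modular fusion subcategory $\mathcal{C}(C_2,q)\subset\mathcal{Z}$ of dimension $2$. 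Müger's decomposition of a modular fusion category along a nondegenerate fusion subcategory \cite{tcat} then yields a braided equivalence $\mathcal{Z}\simeq\mathcal{C}(C_2,q)\boxtimes\mathcal{D}$ with $\mathcal{D}$ modular.

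What remains is to pin down $\mathcal{D}$ and extract a numerical contradiction. The twists of a Deligne product are the pairwise products of the twists of the factors, so if $T$ denotes the set of twists of $\mathcal{D}$ then $\{1,\zeta_4\}\cdot T=\{1,\zeta_4,\zeta_4^3\}$. Since $-1$ is not a twist of $\mathcal{Z}$, neither $-1$ nor $\zeta_4$ may lie in $T$ (either would produce $-1\in\{1,\zeta_4\}\cdot T$), while $1\in T$ automatically and a short check then forces $\zeta_4^3\in T$; hence $T=\{1,\zeta_4^3\}$ exactly. Thus $\mathcal{D}$ has two distinct twists with $T$-matrix of order $4$, so Theorem \ref{th:twotwists} identifies it, via the $N\neq2$ case of Figure \ref{fig:two}, as the rank-$2$ metric group $\mathcal{C}(C_2,q')$ with $q'(1)=\zeta_4^3$, of dimension $2$. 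Comparing global dimensions gives $|G|^2=\dim(\mathcal{Z})=\dim(\mathcal{C}(C_2,q))\dim(\mathcal{D})=4$, so $|G|=2$, contradicting $G\not\cong C_2$. I expect the converse to be the main obstacle: one must verify that the subcategory generated by $X$ is genuinely nondegenerate so Müger's theorem applies, and that the twist bookkeeping determines $\mathcal{D}$ uniquely; once the factorization is secured, the dimension count — precisely where $G\not\cong C_2$ is used — closes the argument.
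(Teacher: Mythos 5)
Your proof is correct and takes essentially the same approach as the paper: the direction $\theta_X=1\Rightarrow X^2\cong\mathbbm{1}$ chains Lemmas \ref{anteriorpost} and \ref{anterior} exactly as the paper does, and your converse is precisely the paper's factorization $\mathcal{Z}\simeq\mathcal{C}(C_2,q)\boxtimes\mathcal{D}$ followed by identifying $\mathcal{D}$ via Theorem \ref{th:twotwists} and contradicting $G\not\cong C_2$ by a dimension count. You merely make explicit the nondegeneracy and twist-bookkeeping steps the paper leaves implicit.
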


\begin{proof}
If $\mathcal{Z}$ has an invertible object of nontrivial twist $\zeta\in\{\zeta_4,\zeta_4^3\}$ and order $2$, then $\mathcal{Z}$ factors as $\mathcal{Z}\simeq\mathcal{D}\boxtimes\mathcal{P}$ where $\mathcal{P}$ is pointed of dimension $2$ and $\mathcal{D}$ has twists $\{1,\zeta^{-1}\}$.  This would imply $G\cong C_2$ by Theorem \ref{th:twotwists}.  Conversely, if $\theta_X=1$, then $X\in\mathrm{Rep}(G)$ by Lemma \ref{anteriorpost} and the result follows from Lemma \ref{anterior}.
\end{proof}

\begin{lemma}\label{seetoo}
Assume $G$ is a finite $2$-group, $G\not\cong C_2$, and $\mathcal{Z}:=\mathcal{Z}(\mathrm{Vec}_G^\omega)$ has twists $1$, $\theta=\zeta_4$ and $\eta=\zeta_4^3$.  Any invertible object of $\mathcal{Z}$ of nontrivial twist has order $4$.
\end{lemma}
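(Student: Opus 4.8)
The plan is to combine the quadratic-form structure carried by the invertible objects of $\mathcal{Z}$ with the order-two criterion already established in Lemma \ref{c2}. Write $A\subset\mathcal{O}(\mathcal{Z})$ for the group of isomorphism classes of invertible objects of $\mathcal{Z}$ under tensor product, and recall that these span a pointed braided fusion subcategory on which the restriction of the twist is a quadratic form $q\colon A\to\mathbb{C}^\times$ in the sense of \cite[Section 8.4]{tcat}. Being a quadratic form, $q$ is homogeneous of degree two, so $q(X^n)=q(X)^{n^2}$ for every $X\in A$ and $n\in\mathbb{Z}$; this homogeneity is the single analytic input that drives the argument.

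Now fix an invertible $X$ of nontrivial twist, so that $\theta_X\in\{\zeta_4,\zeta_4^3\}$ by the hypothesis on the twists of $\mathcal{Z}$. First I would compute $\theta_{X^2}=q(X^2)=q(X)^4=\theta_X^4=1$, using that both $\zeta_4$ and $\zeta_4^3$ are primitive fourth roots of unity. Since $X^2$ is again invertible, applying Lemma \ref{c2} to $X^2$ yields $(X^2)^2\cong\mathbbm{1}$, that is $X^4\cong\mathbbm{1}$, so the order of $X$ divides $4$. To pin the order down to exactly $4$ I would then exclude the smaller possibilities: the order is not $1$ because $\theta_X\neq1$ forces $X\not\cong\mathbbm{1}$, and the order is not $2$ by the converse half of Lemma \ref{c2}, since $X^2\cong\mathbbm{1}$ would give $\theta_X=1$, contradicting $\theta_X\in\{\zeta_4,\zeta_4^3\}$. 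Hence the order of $X$ is exactly $4$.

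I do not expect a genuine obstacle here, as the argument is short once the two ingredients are assembled. The only point meriting care is the homogeneity identity $\theta_{X^2}=\theta_X^4$: it is precisely this that converts a primitive fourth-root twist on $X$ into the trivial twist on $X^2$, at which point Lemma \ref{c2} can be invoked twice. All of the nontrivial categorical content—the equivalence of $\theta_X=1$ with $X^2\cong\mathbbm{1}$, and the role of the standing hypothesis $G\not\cong C_2$—has already been absorbed into Lemma \ref{c2}, so this statement follows as a clean formal consequence.
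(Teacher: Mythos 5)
Your proof is correct, but it takes a genuinely different and more economical route than the paper's. The paper argues structurally: it forms the braided fusion subcategory $\mathcal{D}$ generated by $X$, rules out nondegeneracy of $\mathcal{D}$ (which would force $G\cong C_2$), identifies the M\"uger center $C_\mathcal{D}(\mathcal{D})$ as $\{\mathbbm{1},\delta\}$ with $\delta$ of order $2$ via Lemma \ref{c2}, and then de-equivariantizes, using the classification of anisotropic pointed categories \cite[Proposition A.13]{DGNO} to get $\dim(\mathcal{D}_{C_2})=2$ and hence $\dim(\mathcal{D})=4$. You instead use only the homogeneity of the twist on the group of invertible objects, $\theta_{X^{n}}=\theta_X^{n^2}$, to get $\theta_{X^{2}}=\theta_X^4=1$, and then apply both directions of Lemma \ref{c2}: the ``if'' direction to $X^{2}$ to obtain $X^{4}\cong\mathbbm{1}$, and the ``only if'' direction to $X$ to exclude order $2$. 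This avoids the M\"uger-center and de-equivariantization machinery entirely. The one point worth flagging is your citation: the quadratic form of \cite[Section 8.4]{tcat} is the self-braiding $c_{X,X}$ rather than the twist, so strictly you should justify $\theta_{X^2}=\theta_X^4$ separately. It does hold in any ribbon category: the balancing axiom gives $\theta_{X\otimes Y}=\theta_X\theta_Y c_{Y,X}c_{X,Y}$, the monodromy is bimultiplicative on invertibles, and $\theta_{X^\ast}=\theta_X$ forces $c_{X,X}^2=\theta_X^2$, from which $\theta_{X^n}=\theta_X^{n^2}$ follows; alternatively, in $\mathcal{Z}(\mathrm{Vec}_G^\omega)$ all invertibles have dimension $1$ so $\theta_X=c_{X,X}$ and the cited quadratic form is literally the twist. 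With that justification supplied, your argument is a clean and arguably preferable replacement for the paper's proof.
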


\begin{proof}
Let $X\in\mathcal{O}(\mathcal{Z})$ have nontrivial twist and denote by $\mathcal{D}$, the braided fusion subcategory of $\mathcal{Z}$ $\otimes$-generated by $X$.  If $\mathcal{D}$ were nondegenerately braided, then $G\cong C_2$ by the argument in Lemma \ref{c2}, against our assumptions.  Therefore $\mathcal{D}$ contains a nontrivial Tannakian subcategory $C_\mathcal{D}(\mathcal{D})$ which contains a nontrivial invertible object $\delta$ of order $2$ by Lemma \ref{c2}, which in turn implies $\mathcal{O}(C_\mathcal{D}(\mathcal{D}))=\{\mathbbm{1},\delta\}$.  Moreover, the de-equivariantization $\mathcal{D}_{C_2}$ by the $\otimes$-action of $\delta$ produces an anisotropic modular fusion category, which must be dimension $2$ by \cite[Proposition A.13]{DGNO}.  We are done as $\dim(\mathcal{D})=2\dim(\mathcal{D}_{C_2})$ \cite[Section 4.15]{tcat}.
\end{proof}

\begin{lemma}\label{propo2}
Assume $G$ is a finite $2$-group such that $\mathcal{Z}(\mathrm{Vec}_G^\omega)$ has exactly three distinct twists.  If there exists an invertible object $X\in\mathcal{Z}(\mathrm{Vec}_G^\omega)$ with nontrivial twist, then $\mathcal{Z}(\mathrm{Vec}_G^\omega)$ is pointed of dimenson $4$ or $16$.
\end{lemma}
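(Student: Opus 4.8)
The plan is to pin down the twist set, handle a single degenerate base case, reduce to the pointed situation, and then read off the dimension from the classification of metric $2$-groups. By Lemma~\ref{lastwan}, since $G$ is a $2$-group the three twists must be $1,\zeta_4,\zeta_4^3$, so without loss of generality I take $\theta=\zeta_4$ and $\eta=\zeta_4^3$. If $G\cong C_2$, then $\mathcal{Z}(\mathrm{Vec}_{C_2}^\omega)$ is the center of a pointed category of dimension $2$ and is therefore itself pointed (a metric group) of dimension $4$; the hypothesis of three twists selects the nontrivial $\omega$, giving twists $1,\zeta_4,\zeta_4^3$, and this already realizes the first conclusion. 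For the rest I assume $G\not\cong C_2$, so that Lemmas~\ref{c2} and~\ref{seetoo} apply, and aim to show $\mathcal{Z}$ is pointed of dimension $16$.

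Let $A$ be the group of invertible objects of $\mathcal{Z}$, equipped with the quadratic form $q:=\theta|_A$ valued in $\{1,\zeta_4,\zeta_4^3\}$. First I would record the order structure of $A$: by Lemma~\ref{c2} an invertible of trivial twist has order at most $2$, while by Lemma~\ref{seetoo} an invertible of nontrivial twist has order exactly $4$, so $A$ has exponent dividing $4$. The hypothesized $X$ contributes an element of order $4$, so $A$ is a nontrivial abelian $2$-group of exponent $4$. Again by Lemma~\ref{c2} the $2$-torsion $A[2]$ is exactly the set of invertibles of trivial twist, and by Lemma~\ref{anteriorpost} these lie in the canonical Lagrangian $\mathrm{Rep}(G)$; in particular $q|_{A[2]}=1$, so $A[2]$ is isotropic for the bilinear form associated to $q$.

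The hard part will be showing that $\mathcal{Z}$ is pointed, i.e.\ that $G$ is abelian; this is where I expect the real work to lie. The plan is to prove the maximal pointed subcategory $\mathcal{Z}_{\mathrm{pt}}$ is nondegenerately braided and then force $\mathcal{Z}_{\mathrm{pt}}=\mathcal{Z}$. For nondegeneracy, I would observe that the M\"uger center of $\mathcal{Z}_{\mathrm{pt}}$ is a pointed symmetric category whose transparent invertibles $z$ satisfy $\theta_z^2=1$; since $-1$ is not a twist of $\mathcal{Z}$, each such $z$ has $\theta_z=1$, so the center is Tannakian $\mathrm{Rep}(E)$ with $E\subseteq A[2]\subseteq\mathrm{Rep}(G)$. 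I would then show $E$ is trivial by exploiting that $A[2]$ already exhausts the invertibles of the Lagrangian $\mathrm{Rep}(G)$ (the linear characters of $G$): a nontrivial $E$ would, after de-equivariantization by $\mathrm{Rep}(E)$, contradict the dimension bookkeeping forced by $\dim\mathcal{Z}=|G|^2$ and $\dim\mathrm{Rep}(G)=|G|$. With $\mathcal{Z}_{\mathrm{pt}}$ nondegenerate, the complementary factor $C_{\mathcal{Z}}(\mathcal{Z}_{\mathrm{pt}})$ in the resulting M\"uger decomposition has only the trivial invertible and twists in $\{1,\zeta_4,\zeta_4^3\}$; showing this factor is trivial (so that $\mathcal{Z}$ is pointed and $G$ abelian) is the crux of the argument.

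Once $\mathcal{Z}$ is pointed it is a metric group $(A,q)$, and the isotropy bound $|A[2]|^2\le|A|$ combined with exponent $4$ forces $A\cong C_4^{\,r}$ with $A[2]$ a Lagrangian. Writing the bilinear form as $\zeta_4^{B}$ for a symmetric matrix $B$ over $\mathbb{Z}/4\mathbb{Z}$, every generator has twist $\zeta_4^{\pm1}$, so $B_{ii}\equiv2\pmod 4$; reducing mod $2$ makes $B$ alternating, whence nondegeneracy forces $r$ even and rules out $r=1$ and all odd $r$ by a short determinant computation. The remaining constraint is that $q$ omit the value $-1$: for $r\ge4$ this is incompatible with nondegeneracy, as the value $\zeta_4^2$ becomes unavoidable (one already sees it on the all-ones vector when the off-diagonal entries are units), so $r=2$ and $\dim\mathcal{Z}=16$. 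I would phrase this last step as a direct appeal to the classification of metric $2$-groups in \cite[Appendices A.3--A.5]{DGNO}, exactly as in Lemma~\ref{core}, listing those with twists contained in $\{1,\zeta_4,\zeta_4^3\}$ and reading off that only dimensions $4$ and $16$ occur.
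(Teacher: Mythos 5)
There is a genuine gap, and you have flagged it yourself: the step ``showing this factor is trivial (so that $\mathcal{Z}$ is pointed and $G$ abelian) is the crux of the argument'' is precisely the content of the lemma, and your proposal contains no argument for it. The surrounding scaffolding does not close the hole either. Your route requires $\mathcal{Z}_{\mathrm{pt}}$ to be nondegenerately braced so that $\mathcal{Z}\simeq\mathcal{Z}_{\mathrm{pt}}\boxtimes C_{\mathcal{Z}}(\mathcal{Z}_{\mathrm{pt}})$, but the proposed proof that the transparent subgroup $E$ is trivial (``contradict the dimension bookkeeping forced by $\dim\mathcal{Z}=|G|^2$'') is not an argument: de-equivariantizing $\mathcal{Z}(\mathrm{Vec}_G^\omega)$ by a Tannakian $\mathrm{Rep}(E)\subset\mathrm{Rep}(G)$ is a perfectly consistent operation producing $\mathcal{Z}(\mathrm{Vec}_H^{\omega'})$ with $|H|=|G|/|E|$, so no dimension count is violated. (For scale, in the rank-$22$ example of Figure \ref{fig:twoo} the pointed part is entirely Tannakian; only the hypothesis of an invertible with nontrivial twist saves you, and you never use it to rule out degeneracy.) Even granting nondegeneracy of $\mathcal{Z}_{\mathrm{pt}}$, you give no reason the complementary centralizer is trivial. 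So the proposal establishes the conclusion only under the unproven assumption that $\mathcal{Z}$ is pointed; the final metric-group analysis, while plausible, is conditional on that.

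The paper's proof avoids proving pointedness abstractly and instead runs a one-step descent that bounds $|G|$ directly. Since $G\not\cong C_2$, the hypothesized invertible $X$ has order $4$ with $X^{\otimes2}$ of trivial twist (Lemmas \ref{seetoo} and \ref{c2}), so $A:=\mathbbm{1}\oplus X^{\otimes2}$ is a regular $\mathrm{Rep}(C_2)$ algebra and $\mathcal{Z}(\mathrm{Vec}_G^\omega)_A^0\simeq\mathcal{Z}(\mathrm{Vec}_H^{\omega'})$ for a $2$-group $H$ with $|H|=|G|/2$, inheriting twists from $\{1,\zeta_4,\zeta_4^3\}$. The key observation is that $A\otimes X$ is an invertible object of \emph{order $2$} with nontrivial twist in this quotient, which by Lemma \ref{seetoo} is impossible unless $H\cong C_2$. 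This forces $\dim\mathcal{Z}(\mathrm{Vec}_G^\omega)=16$, and pointedness is then read off from the finitely many doubles of groups of order $4$ (Example \ref{groupexamples}). If you want to salvage your approach, you would need to supply an actual proof that $C_{\mathcal{Z}}(\mathcal{Z}_{\mathrm{pt}})$ is trivial; the descent via $X^{\otimes2}$ is the mechanism the paper uses in its place.
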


\begin{proof}
We may assume $G\not\cong C_2$, so that $X$ $\otimes$-generates a braided fusion subcategory of $\mathcal{Z}(\mathrm{Vec}_G^\omega)$ of dimension $4$ by Lemma \ref{seetoo}, and $X^2:=X\otimes X$ has trivial twist by Lemma \ref{c2}.  Let $A:=\mathbbm{1}\oplus X^2$ be the corresponding regular $\mathrm{Rep}(C_2)$ algebra and consider the category of local $A$-modules $\mathcal{Z}(\mathrm{Vec}_G^\omega)_A^0$, which is a twisted double of a finite $2$-group $H$ \cite[Theorem 4.8]{drinfeld2007grouptheoretical} with simple objects having twists in the set $\{1,\zeta_3,\zeta_4\}$.  The $A$-module $A\otimes X$ is invertible of order $2$ with nontrivial twist, hence $H\cong C_2$ by Lemma \ref{seetoo}.
\end{proof}

\begin{example}\label{groupexamples}
Lemmas \ref{lastwan}--\ref{propo2} have shown we need only inspect the modular data of the twisted doubles of groups of order $2$ and $4$,  which gives exactly two examples of modular data satisfying the conditions of Lemma \ref{propo2}.  The smaller of the two, $\mathcal{Z}(\mathrm{Vec}_{C_2}^\omega)$ for nontrivial $\omega\in H^3(C_2,\mathbb{C}^\times)$ is simply the product of the two inequivalent pointed modular fusion categories of rank $2$.  The larger example is more interesting, having $C_4^2$ fusion rules, but the category does not factor.

\begin{align}
S&=\left[
\begin{array}{cccc}
1 & 1 & 1 & 1 \\
1 & 1 & -1 & -1 \\
1 & -1 & -1 & 1 \\
1 & -1 & 1 & -1
\end{array}
\right]&
T&=\mathrm{diagonal}(1, 1, \zeta_4^3, \zeta_4)
\end{align}

\begin{equation}
S=\left[\begin{array}{rrrrrrrrrrrrrrrr}
1 & 1 & 1 & 1 & 1 & 1 & 1 & 1 & 1 & 1 & 1 & 1 & 1 & 1 & 1 & 1 \\
1 & 1 & 1 & 1 & -1 & -1 & -1 & -1 & 1 & 1 & 1 & 1 & -1 & -1 & -1 & -1 \\
1 & 1 & 1 & 1 & 1 & 1 & 1 & 1 & -1 & -1 & -1 & -1 & -1 & -1 & -1 & -1 \\
1 & 1 & 1 & 1 & -1 & -1 & -1 & -1 & -1 & -1 & -1 & -1 & 1 & 1 & 1 & 1 \\
1 & -1 & 1 & -1 & -1 & 1 & -1 & 1 & \zeta_{4} & \zeta_{4} & \zeta_4^3 & \zeta_4^3 & \zeta_4^3 & \zeta_{4} & \zeta_4^3 & \zeta_{4} \\
1 & -1 & 1 & -1 & 1 & -1 & 1 & -1 & \zeta_4^3 & \zeta_4^3 & \zeta_{4} & \zeta_{4} & \zeta_4^3 & \zeta_{4} & \zeta_4^3 & \zeta_{4} \\
1 & -1 & 1 & -1 & -1 & 1 & -1 & 1 & \zeta_4^3 & \zeta_4^3 & \zeta_{4} & \zeta_{4} & \zeta_{4} & \zeta_4^3 & \zeta_{4} & \zeta_4^3 \\
1 & -1 & 1 & -1 & 1 & -1 & 1 & -1 & \zeta_{4} & \zeta_{4} & \zeta_4^3 & \zeta_4^3 & \zeta_{4} & \zeta_4^3 & \zeta_{4} & \zeta_4^3 \\
1 & 1 & -1 & -1 & \zeta_{4} & \zeta_4^3 & \zeta_4^3 & \zeta_{4} & -1 & 1 & 1 & -1 & \zeta_4^3 & \zeta_4^3 & \zeta_{4} & \zeta_{4} \\
1 & 1 & -1 & -1 & \zeta_{4} & \zeta_4^3 & \zeta_4^3 & \zeta_{4} & 1 & -1 & -1 & 1 & \zeta_{4} & \zeta_{4} & \zeta_4^3 & \zeta_4^3 \\
1 & 1 & -1 & -1 & \zeta_4^3 & \zeta_{4} & \zeta_{4} & \zeta_4^3 & 1 & -1 & -1 & 1 & \zeta_4^3 & \zeta_4^3 & \zeta_{4} & \zeta_{4} \\
1 & 1 & -1 & -1 & \zeta_4^3 & \zeta_{4} & \zeta_{4} & \zeta_4^3 & -1 & 1 & 1 & -1 & \zeta_{4} & \zeta_{4} & \zeta_4^3 & \zeta_4^3 \\
1 & -1 & -1 & 1 & \zeta_4^3 & \zeta_4^3 & \zeta_{4} & \zeta_{4} & \zeta_4^3 & \zeta_{4} & \zeta_4^3 & \zeta_{4} & -1 & 1 & 1 & -1 \\
1 & -1 & -1 & 1 & \zeta_{4} & \zeta_{4} & \zeta_4^3 & \zeta_4^3 & \zeta_4^3 & \zeta_{4} & \zeta_4^3 & \zeta_{4} & 1 & -1 & -1 & 1 \\
1 & -1 & -1 & 1 & \zeta_4^3 & \zeta_4^3 & \zeta_{4} & \zeta_{4} & \zeta_{4} & \zeta_4^3 & \zeta_{4} & \zeta_4^3 & 1 & -1 & -1 & 1 \\
1 & -1 & -1 & 1 & \zeta_{4} & \zeta_{4} & \zeta_4^3 & \zeta_4^3 & \zeta_{4} & \zeta_4^3 & \zeta_{4} & \zeta_4^3 & -1 & 1 & 1 & -1
\end{array}\right]
\end{equation}
\[T=\mathrm{diagonal}(1, 1, 1, 1, \zeta_4^3, \zeta_4, \zeta_4^3, \zeta_4, \zeta_4^3, \zeta_4, \zeta_4, \zeta_4^3, \zeta_4, \zeta_4^3, \zeta_4^3, \zeta_4)\]
\end{example}

\begin{lemma}\label{propo34}
Assume $G$ is a finite $2$-group such that $\mathcal{Z}(\mathrm{Vec}_G^\omega)$ has exactly three distinct twists.  Then $\mathcal{Z}(\mathrm{Vec}_G^\omega)$ is one of the two pointed modular fusion categories from Example \ref{groupexamples}, or the unique twisted double of $C_2^3$ with exactly three distinct twists \cite[Figure 4]{MR2333187}.
\end{lemma}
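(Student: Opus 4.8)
The plan is to dispatch the pointed possibilities with the machinery already assembled and then isolate a single non-pointed survivor. Since $G$ is a $2$-group it has exponent $2$ or $4$, so Lemma \ref{lastwan} forces the three twists to be $1,\zeta_4,\zeta_4^3$ (the exponent-$3$ alternative of that lemma being vacuous here). If $\mathcal{Z}:=\mathcal{Z}(\mathrm{Vec}_G^\omega)$ possesses an invertible object of nontrivial twist, Lemma \ref{propo2} already concludes that $\mathcal{Z}$ is pointed of dimension $4$ or $16$, and these are precisely the two categories exhibited in Example \ref{groupexamples}. I would therefore spend the rest of the argument in the complementary case: \emph{every} invertible object of $\mathcal{Z}$ has trivial twist, hence lies in the canonical Lagrangian $\mathrm{Rep}(G)$ by Lemma \ref{anteriorpost} and squares to $\mathbbm{1}$ by Lemma \ref{c2}. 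Then the group of invertibles is $\widehat{G^{\mathrm{ab}}}$, an elementary abelian $2$-group, and the normalization $D_1=|G|$, $D_\theta=D_\eta=|G|(|G|-1)/2$ coming from Lemma \ref{anteriorpost} records that every $\zeta_4^{\pm1}$-twisted simple is non-invertible of dimension a power of $2$ exceeding $1$.

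The next step is to pin down $G$ as elementary abelian. Once $G$ is known to be abelian the self-duality of every simple summand of $\mathrm{Rep}(G)$ supplied by Lemma \ref{anterior} says that every character of $G$ equals its own inverse, so $\widehat{G}$, and hence $G$, has exponent $2$ and $G\cong C_2^k$. Establishing abelianness itself is the delicate point: I would argue that a non-abelian group of exponent $4$ (the order-$8$ groups $Q_8$ and $D_4$, or a larger $2$-group) cannot occur, either because its double would again acquire an invertible object of nontrivial twist and then fall under Lemma \ref{propo2} with a dimension incompatible with $|G|^2$, or by direct inspection of the twisted-double modular data through the explicit formulas of \cite{MR1770077,MR4257620}, combined with the balance $D_\theta=D_\eta$ and the zero- and squared-trace identities (\ref{zerotrace})--(\ref{zero2trace}) applied to the inductions $I(g)$.

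With $G\cong C_2^k$ in hand I would invoke the explicit description of the twisted doubles $\mathcal{Z}(\mathrm{Vec}_{C_2^k}^\omega)$ and their spin structures in \cite[Theorem 4.7 and Figure 4]{MR2333187}. Running through the classes $\omega\in H^3(C_2^k,\mathbb{C}^\times)$ and reading off the resulting twist multisets, exactly one class, occurring for $k=3$, yields the incomplete set $\{1,\zeta_4,\zeta_4^3\}$ while keeping a nontrivial-twist object non-invertible; its modular data is that of Figure \ref{fig:twoo}. Together with the two pointed families of Example \ref{groupexamples} this produces the three listed categories and no others.

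The main obstacle is the uniform bound $|G|\le 8$ in the non-pointed case, since a priori a large $C_2^k$ could contribute a fourth example. The leverage is precisely the \emph{incompleteness} of the twist set: demanding that $-1$ never occur, together with the forced balance $D_\theta=D_\eta$ and the vanishing of the trivial-twist part of each $I(g)$, must collapse the admissible ranks to $k\le 3$. Organizing this collapse cleanly, rather than as an open-ended search over $2$-groups, is where the real work lies, and I would route it through a de-equivariantization by an order-$2$ boson $A=\mathbbm{1}\oplus X$: this replaces $\mathcal{Z}$ by the double of a $2$-group of half the order, with twists still contained in $\{1,\zeta_4,\zeta_4^3\}$ by \cite[Theorem 4.8]{drinfeld2007grouptheoretical}, setting up an induction on $|G|$ whose only non-pointed base case is the dimension-$64$ double of $C_2^3$.
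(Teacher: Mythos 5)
Your opening moves match the paper: Lemma \ref{lastwan} pins the twists to $\{1,\zeta_4,\zeta_4^3\}$, and Lemma \ref{propo2} disposes of the case where some invertible object has nontrivial twist, yielding the two pointed examples. Your closing idea --- descend by de-equivariantizing along an order-$2$ boson, halving $|G|$ at each step, and induct --- is also exactly the paper's strategy (it generalizes the proof of Lemma \ref{minusone}). But there are two genuine gaps in between. First, the reduction to $G$ elementary abelian is asserted, not proved: you say a non-abelian exponent-$4$ group ``would again acquire an invertible object of nontrivial twist'' or can be excluded ``by direct inspection,'' but neither claim is substantiated, and nothing in Lemmas \ref{lastwan}--\ref{propo2} produces an invertible of nontrivial twist in $\mathcal{Z}(\mathrm{Vec}_G^\omega)$ for non-abelian $G$. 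The paper sidesteps this entirely by never trying to identify the isomorphism type of $G$; it simply checks all $2$-groups up to a fixed order. Moreover, your appeal to \cite[Theorem 4.7 and Figure 4]{MR2333187} only covers groups of order $8$, so even granting $G\cong C_2^k$ you have no reference for $k\geq4$.

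Second, and more seriously, the induction does not close where you say it does. The descent from a group of order $16$ lands on a twisted double of a group of order $8$, where the $C_2^3$ example \emph{does} exist; so order $16$ cannot be eliminated by the inductive step, and your assertion that incompleteness of the twist set ``must collapse the admissible ranks to $k\le3$'' is precisely the claim that needs proof. The paper resolves this with a direct finite verification: the $7$ exponent-$4$ groups of order $16$ contribute $204$ sets of modular data, none of which has exactly three distinct twists. Only once order $16$ is cleared does the descent argument kill all orders $\geq32$ (since the descended category would then have to be trivial, a two-twist category of non-square dimension, or a three-twist double of a group of order $\geq16$, all impossible). Your proposal is missing this base-case computation, and without it the argument does not terminate.
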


\begin{proof}
The argument in the proof of Lemma \ref{minusone} is immediately generalizable: it suffices to inspect the modular data of finite $2$-groups of increasing order until no examples are found.  The groups of orders $2$ and $4$ have already been inspected producing the modular data in Example \ref{groupexamples}.  The modular data of twisted doubles of groups of order $8$ was investigated in \cite{MR2333187} and there is a unique twisted double of $C_2^3$ with exactly three distinct twists \cite[Figure 4]{MR2333187}.  There are $14$ groups of order $16$ up to isomorphism but only $7$ have exponent $4$.  This leaves a paltry 204 sets of modular data to peruse either by hand or using the computer algebra software GAP, and none have exactly three distinct twists. 
\end{proof}

To finish the $N=4$ subsection, note that if $\theta=-\eta=\zeta_4$ without loss of generality, then $\xi=1$ or $\xi=\zeta_8$ up to Galois conjugacy of modular fusion categories by Figure \ref{fig:eighttwo}.  In the former case, $D_1=\sqrt{D}$ and $D\in\mathbb{Z}$, hence $\mathcal{C}$ is a twisted double of a finite group.  Lemma \ref{propo34} describes the three possible braided equivalence classes for $\mathcal{C}$ in this case.  Otherwise, as illustrated in Figure \ref{fig:eightonenowtoo}, $D_1=D_\theta+D_\eta$, hence $D=2D_1$.  But the Pythagorean theorem gives $2D_1^2=D$, hence $D_1=1$ and $D=2$, which cannot occur with three distinct twists.  We conclude there are only $3$ braided equivalence classes of modular fusion categories with three twists and $N=4$.
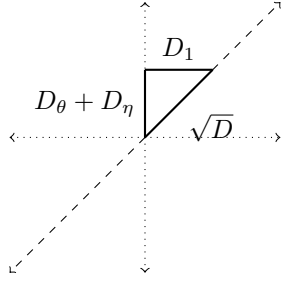
\begin{figure}[H]
\centering
\begin{equation*}
\begin{tikzpicture}[scale=0.18]
\draw[<->,dotted] (0,-10) -- (0,10);
\draw[<->,dotted] (-10,0) -- (10,0);
\draw[->,dashed] (0,0) -- (10,10);
\draw[->,dashed] (0,0) -- (-10,-10);
\draw[-,thick] (0,0) -- node[below right] {$\sqrt{D}$} (5,5);
\draw[thick] (0,5) -- node[above] {$D_1$} (5,5);
\draw[thick] (0,0) -- node[left] {$D_\theta+D_\eta$} (0,5);
\end{tikzpicture}
\end{equation*}
    \caption{A geometric view of $\tau_1=\zeta_8\sqrt{D}$ with twists $\{1,\zeta_4,\zeta_4^3\}$ and $\xi=\zeta_8$}%
    \label{fig:eightonenowtoo}%
\end{figure}


\subsubsection{$N=8$}\label{8sec}

\par In this case $n$ is divisible by $8$ \cite[Theorem 7.1]{MR2725181} and Lemma \ref{atemf} states that the twists of $\mathcal{C}$ are $1$, $-1$, and $\zeta_8$ up to Galois conjugacy of modular fusion categories and $\xi=\zeta_8$ or $\xi=\zeta_8^3$ by observing the possible $\xi$ in Figure \ref{fig:eighttwo}.  If $\xi=\zeta_8^3$, the argument for nonexistence is roughly the same as the final $N=4$ case above.  The calculation of $\xi=\zeta_8^3$ is illustrated in Figure \ref{fig:eightonenow}.
\begin{figure}[H]
\centering
\begin{equation*}
\begin{tikzpicture}[scale=0.18]
\draw[<->,dotted] (0,-10) -- (0,10);
\draw[<->,dotted] (-10,0) -- (10,0);
\draw[->,dashed] (0,0) -- (10,-10);
\draw[->,dashed] (0,0) -- (-10,10);
\draw[-,thick] (0,0) -- node[above right] {$\sqrt{D}$} (-5,5);
\draw[thick] (-10,0) -- node[above left] {$D_\eta$} (-5,5);
\draw[thick] (-10,0) -- node[below] {$D_1+D_\theta$} (0,0);
\end{tikzpicture}
\end{equation*}
    \caption{A geometric view of $\tau_1=\zeta_8^3\sqrt{D}$ with twists $\{1,-1,\zeta_8\}$ and $\xi=\zeta_8^3$}%
    \label{fig:eightonenow}%
\end{figure}
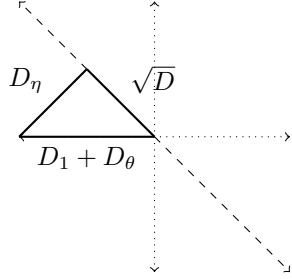
Geometrically, $D_\eta=\sqrt{D}=\sqrt{D_1+D_\theta+D_\eta}$, hence $D_\eta=(1/2)(1+\sqrt{1+4(D_1+D_\theta)})$.  Elementary trigonometry gives $D_\eta\sqrt{2}=D_1+D_\theta$, hence $D_\eta=(1/2)(1+\sqrt{1+4\sqrt{2}D_\eta})$ which implies $D_\eta=1+\sqrt{2}$ which cannot occur since $D_\eta$ has a Galois conjugate less than zero, and $D_\eta$ must be totally greater than or equal to $0$ \cite[Remark 2.5]{ENO}.

\par Lastly, if $\xi=\zeta_8$ and $D\in\mathbb{Z}$, then $D=4$ by Lemma \ref{minusone} and $D$ is pointed, so we may consider only $D\in\mathbb{Q}(\sqrt{2})\setminus\mathbb{Q}$.  In this case, $D_1=D_\theta$ and $D_\eta=\sqrt{D}=\sqrt{2D_1+D_\eta}$, hence $D_1=(1/2)D_\eta(D_\eta-1)$.  Note that $D_1$ is an algebraic $d$-number as $\mathcal{C}_1$ is closed under Galois conjugacy.  Moreover $D_1^2$ is an integer multiple of an algebraic unit, which implies $(D_\eta-1)^2$ is a rational multiple of a unit, i.e.\ $D_\eta-1$ is an algebraic $d$-number since $D_\eta-1$ is an algebraic integer.  Moreover, $D_\eta$ must be totally greater than or equal to $2$, or else $D_1=(1/2)D_\eta(D_\eta-1)$ has a Galois conjugate less than $(1/2)(2)(1)=1$, which cannot occur since the tensor unit exists.  Thus $D_\eta$ is a quadratic integer in $\mathbb{Q}(\sqrt{2})$ satisfying these conditions, which does not exist by Proposition \ref{dnumberz} below.


\appendix

\section{A categorical Galois action and algebraic $d$-numbers}\label{appa}

Let $\mathcal{C}$ be a modular fusion category.  The characters of the Grothendieck ring of $\mathcal{C}$ are in one-to-one correspondence with maps $Y\mapsto s_{Y,X}/s_{\mathbbm{1},X}$ for some $X\in\mathcal{O}(\mathcal{C})$.  This gives a well-defined permutation action on $\mathcal{O}(\mathcal{C})$ via the corresponding Galois conjugacy of characters.  Specifically, for each $\sigma\in\mathrm{Gal}(\overline{\mathbb{Q}}/\mathbb{Q})$ there is a unique permutation $\hat{\sigma}:\mathcal{O}(\mathcal{C})\to\mathcal{O}(\mathcal{C})$ given by $\sigma(s_{Y,X}/s_{\mathbbm{1},X})=s_{Y,\hat{\sigma}(X)}/s_{\mathbbm{1},\hat{\sigma}(X)}$ for all $X,Y\in\mathcal{O}(\mathcal{C})$.  When restricted to $X=\mathbbm{1}$ for example, we have the relation $\dim(\hat{\sigma}(X))^2=(D/\sigma(D))\sigma(\dim(X))^2$.  The interested reader can read more about this Galois action in \cite{gannoncoste,dong2015congruence}, for example.

\par The Galois action on the characters of modular fusion categories implies the modular data of a modular fusion category consists of cyclotomic numbers \cite[Theorem 8.14.7]{tcat} and certain numerical invariants of modular fusion categories are algebraic $d$-numbers, in the sense of \cite{codegrees}.  These are algebraic integers which generate Galois-invariant ideals in the ring of algebraic integers.  For identification and classification purposes, there are many alternative definitions \cite[Lemma 2.7]{codegrees}.  For the purposes of this manuscript, aside from the integers themselves, it suffices to consider algebraic $d$-numbers $\alpha$ such that $[\mathbb{Q}(\alpha):\mathbb{Q}]=2$.  In this case, $\alpha$ is an algebraic $d$-number if and only if $\alpha^2$ is an integer multiple of an algebraic unit \cite[Lemma 2.7(iv)]{codegrees} and algebraic units in real quadratic fields are multiplicatively generated by $\pm1$ and a chosen fundamental unit by Dirichlet's unit theorem.  Equivalently, with $\mathcal{N}(\alpha)$ and $\mathcal{T}(\alpha)$ the algebraic norm and trace of $\alpha$ in $\mathbb{Q}(\alpha)/\mathbb{Q}$, $\alpha$ is an algebraic $d$-number if and only if $\mathcal{T}(\alpha)^2$ is divisible by $\mathcal{N}(\alpha)$ \cite[Lemma 2.7(v)]{codegrees}.

\par Both the above characterizations of algebraic $d$-numbers will be used to prove the following ancillary results needed for specific lemmas in Section \ref{secsecthree}.

\begin{lemma}\label{lemroot5}
The algebraic $d$-numbers $\alpha\in\mathbb{Q}(\sqrt{5})$ whose norm is a power of $5$ and whose Galois conjugates lie between $1$ and $18$ are $(1/2)(5\pm\sqrt{5})$, $(5/2)(3\pm\sqrt{5})$, and $5$.
\end{lemma}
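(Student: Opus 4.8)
The plan is to exploit the very constrained arithmetic of $\mathbb{Q}(\sqrt{5})$, whose ring of integers is $\mathbb{Z}[\varphi]$ with $\varphi=(1+\sqrt{5})/2$ the fundamental unit (of norm $-1$), and in which the rational prime $5$ is ramified: $(5)=\mathfrak{p}^{2}$ with $\mathfrak{p}=(\sqrt{5})$ and $\mathcal{N}(\mathfrak{p})=5$. Recall from the discussion above that a $d$-number is in particular an algebraic integer generating a Galois-invariant ideal, and that the units of $\mathbb{Z}[\varphi]$ are exactly $\pm\varphi^{k}$ for $k\in\mathbb{Z}$ by Dirichlet's unit theorem.

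First I would reduce every candidate to a normal form. If $\alpha$ is a $d$-number with $|\mathcal{N}(\alpha)|=5^{m}$, then the ideal $(\alpha)$ has norm $5^{m}$; since $\mathfrak{p}$ is the only prime of $\mathbb{Z}[\varphi]$ lying over $5$, unique factorization of ideals forces $(\alpha)=\mathfrak{p}^{m}=\bigl((\sqrt{5})^{m}\bigr)$. Because $\mathfrak{p}$ is ramified this ideal is automatically Galois-stable, so the $d$-number hypothesis costs nothing here beyond integrality. Consequently $\alpha=u(\sqrt{5})^{m}$ for a unit $u$, and every candidate has the shape $\alpha=\pm\varphi^{k}(\sqrt{5})^{m}$ with $m\ge 1$; the only $m=0$ possibility is a unit, which total positivity (forced below) pins to $\alpha=1$, excluded by the hypotheses.

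Next I would impose the interval condition through the two real embeddings $\sigma_{1},\sigma_{2}$ of $\mathbb{Q}(\sqrt{5})$, under which $\sigma_{1}(\alpha)=\pm 5^{m/2}\varphi^{k}$ and $\sigma_{2}(\alpha)=\pm(-1)^{m+k}5^{m/2}\varphi^{-k}$. Requiring both images to lie between $1$ and $18$ forces $\alpha$ to be totally positive, which pins the overall sign to $+$ and imposes the parity $m\equiv k\pmod 2$; in particular $\sigma_{1}(\alpha)\sigma_{2}(\alpha)=5^{m}$, the larger conjugate equals $5^{m/2}\varphi^{|k|}$, and the smaller equals $5^{m/2}\varphi^{-|k|}$. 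The lower bound $5^{m/2}\varphi^{-|k|}\ge 1$ bounds $|k|$ from above for each $m$, while the upper bound $5^{m/2}\varphi^{|k|}\le 18$ bounds it from the other side. For $m=1$ these force $|k|\le 1$, so $k=\pm1$, giving $(1/2)(5\pm\sqrt{5})=\sqrt{5}\,\varphi^{\pm1}$; for $m=2$ they force $k\in\{0,\pm 2\}$, giving $5$ and $(5/2)(3\pm\sqrt{5})=5\varphi^{\pm2}$.

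The crux of the argument — and the reason the cutoff is stated as $18$ rather than a rounder number — is the sharpness of the upper bound at $m=3$. For every $m\ge 3$ the larger conjugate is minimized by taking $|k|$ as small as the parity allows: for even $m$ this minimum is $5^{m/2}\ge 25>18$, while for odd $m$ it is $5^{m/2}\varphi\ge \tfrac12(25+5\sqrt{5})\approx 18.09>18$. The odd case is a genuine near miss, so the main obstacle is verifying this single inequality carefully (together with the parity and sign bookkeeping that enforces total positivity); once $m\le 2$ is established, the remaining verification is a finite, routine computation yielding exactly the five claimed numbers.
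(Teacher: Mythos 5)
Your proof is correct and follows essentially the same route as the paper: reduce $\alpha$ to the normal form $\pm\varphi^{k}(\sqrt{5})^{m}$ with $\varphi=(1+\sqrt{5})/2$ and then bound the unit exponent $k$ via the interval $[1,18]$, with the near miss $5^{3/2}\varphi=\tfrac12(25+5\sqrt{5})\approx18.09$ eliminating all $m\geq3$. The only cosmetic difference is that you obtain the normal form from the ramification of $5$ and unique ideal factorization (noting along the way that the $d$-number hypothesis is automatic for integers of norm $5^{m}$), whereas the paper invokes the characterization that $\alpha^{2}$ is an integer multiple of an algebraic unit and then performs the same finite check over unit exponents.
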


\begin{proof}
Denote $\varphi:=(1/2)(1+\sqrt{5})$.  The only possibilities for the norm of $\alpha$ in the extension $\mathbb{Q}(\sqrt{5})/\mathbb{Q}$ with the assumed bounds are $5$, $25$, and $125$.  In the first case, if $\alpha\neq5$, we must have $\alpha^2=5\varphi^m$ for some $m\in\mathbb{Z}$ \cite[Lemma 2.7]{codegrees}.  The bound $1\leq\alpha^2\leq20^2$ implies $-3\leq m\leq9$ and a finite check shows the only possibility is $\alpha=(1/2)(5\pm\sqrt{5})$.  Repeating this argument in the norm $25$ case gives $-6\leq m\leq5$ and thus $\alpha=(5/2)(3\pm\sqrt{5})$.  The final case gives $-10\leq m\leq2$ and none of these satisfy the hypotheses.
\end{proof}

\begin{lemma}\label{finlem}
Let $\alpha:=2^a\epsilon_2^b$ for some $a,b\in\mathbb{Z}_{\geq0}$ where $\epsilon_2:=1+\sqrt{2}$.  If $\alpha$ is totally greater than $2$ and $\alpha-1$ is an algebraic $d$-number, then $b=0$, i.e.\ $\alpha\in\mathbb{Z}$.
\end{lemma}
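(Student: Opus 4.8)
The plan is to convert the hypothesis that $\beta := \alpha - 1$ is an algebraic $d$-number into an elementary divisibility of rational integers, and then to contradict that divisibility using total positivity of $\alpha$.

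First I would compute the Galois conjugate of $\alpha$ under the nontrivial automorphism $\sqrt{2}\mapsto-\sqrt{2}$ of $\mathbb{Q}(\sqrt{2})$. Since $1-\sqrt{2}=-\epsilon_2^{-1}$, this conjugate is $\alpha' = 2^a(-1)^b\epsilon_2^{-b}$. The hypothesis that $\alpha$ is totally greater than $2$ forces $\alpha' > 0$, which rules out odd $b$ outright; hence $b$ is even and $\alpha' = 2^a\epsilon_2^{-b}$. Setting $A := 2^a$ and $P := \epsilon_2^{b}+\epsilon_2^{-b}$ --- a rational integer because $b$ is even, with $P\geq 6$ whenever $b>0$ --- I would record $\mathcal{T}(\alpha)=AP$ and $\mathcal{N}(\alpha)=A^2$, so that $\mathcal{T}(\beta)=AP-2$ and $\mathcal{N}(\beta)=A^2-AP+1=:M$. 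Note also $M=(\alpha-1)(\alpha'-1)>0$ since both factors exceed $1$.

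By the norm--trace characterization of algebraic $d$-numbers recalled in Appendix \ref{appa}, $\beta$ being a $d$-number is equivalent to $M\mid(AP-2)^2$ in $\mathbb{Z}$. The crucial step is a reduction modulo $M$: from $A^2\equiv AP-1\pmod{M}$ one computes $(AP-2)^2\equiv (P^2-4)A^2\pmod{M}$, and since $M=A(A-P)+1$ is coprime to $A$, this condition collapses to the clean divisibility $M\mid P^2-4$.

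To finish I would contradict this whenever $b>0$ by a size estimate. Rewriting $\alpha'=2A/\bigl(P+\sqrt{P^2-4}\bigr)>2$ gives $A>P+\sqrt{P^2-4}$, hence $M=A(A-P)+1>\bigl(P+\sqrt{P^2-4}\bigr)\sqrt{P^2-4}+1>P^2-4$. Since $P\geq 6$ makes $P^2-4$ a positive integer, no positive divisor of $P^2-4$ can exceed it, so $M\mid P^2-4$ together with $M>P^2-4>0$ are incompatible; therefore $b=0$. The only genuinely delicate point is the modular reduction turning the opaque $M\mid(AP-2)^2$ into $M\mid P^2-4$ (together with the accompanying coprimality $\gcd(M,A)=1$); the remaining inequality and the parity argument from total positivity are routine.
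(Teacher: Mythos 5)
Your proof is correct, and while it starts from the same place as the paper --- the parity argument from total positivity and the norm--trace characterization $\mathcal{N}(\alpha-1)\mid\mathcal{T}(\alpha-1)^2$ --- the way you extract and then refute the resulting divisibility is genuinely different and cleaner. The paper writes $\mathcal{T}(\alpha)=2^{a+1}m$, performs an explicit polynomial division to isolate the fractional part $(2^{a+1}m-1)(m^2-1)/M$ with $M=2^{2a}-(2^{a+1}m-1)$, uses coprimality to deduce $M\leq m^2-1$, and then reaches a contradiction by sandwiching $m$ between a lower bound $3\cdot 2^{a-3}$ and the upper bound coming from $\mathcal{T}(\epsilon_2^b)<\epsilon_2^b\leq 2^{a-1}$. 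Your modular reduction $A^2\equiv AP-1\pmod{M}$ collapses $M\mid(AP-2)^2$ directly to $M\mid P^2-4$ (the same divisibility, since $P=2m$ and $M$ is odd), and your closing inequality $M=A(A-P)+1>\bigl(P+\sqrt{P^2-4}\bigr)\sqrt{P^2-4}+1>P^2-4$ falls out in one line from $\alpha'>2$, with no need for the logarithmic bound $b\leq\log_2(\epsilon_2)^{-1}(a-1)$ or the two-sided estimate on $m$. The trade-off is that the paper's bookkeeping in terms of $m$ and powers of $2$ is set up to be reused almost verbatim in Lemma \ref{bells} (the $\sqrt{2}$-twisted case feeding Proposition \ref{dnumberz}); if you wanted your version to cover that case too you would need to rerun the congruence with $\mathcal{N}(\alpha)=2^{2a+1}$, but the same mechanism clearly goes through.
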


\begin{proof}
Since $\alpha-1$ is an algebraic $d$-number for all $\alpha\in\mathbb{Z}$, it suffices to prove the claim for $\alpha$ such that $\mathbb{Q}(\alpha)=\mathbb{Q}(\sqrt{2})$, hence $b>0$, and moreover $a>0$ since $\alpha$ is totally greater than $2$.  From this, it also follows that $b$ is even, and $2^a\geq2\epsilon_2^b=2^{\log_2(\epsilon_2)b+1}$.  Therefore, $b\leq\log_2(\epsilon_2)^{-1}(a-1)$ for all $a\in\mathbb{Z}_{\geq1}$.  The remainder of the proof is providing a lower bound on $b\in\mathbb{Z}_{\geq1}$.

\par Denote the trace and norm with respect to the field extension $\mathbb{Q}(\sqrt{2})/\mathbb{Q}$ by $\mathcal{T}$ and $\mathcal{N}$.  In this case, $\alpha$ has exactly one other Galois conjugate and we compute $\mathcal{T}(\alpha-1)=\mathcal{T}(\alpha)-2$ and $\mathcal{N}(\alpha-1)=\mathcal{N}(\alpha)-\mathcal{T}(\alpha)+1$.  From assumption, $\mathcal{N}(\alpha-1)$ is positive, hence $\mathcal{N}(\alpha)+1>\mathcal{T}(\alpha)$.  We have $\mathcal{N}(\alpha)=2^{2a}$ and $\mathcal{T}(\alpha)=2^a\mathcal{T}(\epsilon_2^b)=2^{a+1}m$ for some integer $m$ for brevity, since the trace of any algebraic integer in $\mathbb{Q}(\sqrt{2})$ is even, hence $m\leq2^{a-1}$. A lower bound on $m$ requires more subtle analysis.  To achieve this bound,
\begin{equation}
\mathcal{N}(\alpha-1)=\mathcal{N}(\alpha)-\mathcal{T}(\alpha)+1=2^{2a}-(2^{a+1}m-1).
\end{equation}
We have $\mathcal{T}(\alpha-1)=\mathcal{T}(\alpha)-2=2(2^am-1)$.  Therefore for $\alpha-1$ to be an algebraic $d$-number, we require
\begin{align}
\dfrac{(\mathcal{T}(\alpha-1))^2}{\mathcal{N}(\alpha-1)}=\dfrac{\mathcal{T}(\alpha)^2-4\mathcal{T}(\alpha)+4}{\mathcal{N}(\alpha)-\mathcal{T}(\alpha)+1}=4\dfrac{2^{2a}m^2-(2^{a+1}m-1)}{2^{2a}-(2^{a+1}m-1)}\in\mathbb{Z}.
\end{align}
Thus $(2^{2a}m^2-(2^{a+1}m-1))/(2^{2a}-(2^{a+1}m-1))\in\mathbb{Z}$ since the denominator is odd.  Simplifying,
\begin{equation}
\dfrac{2^{2a}m^2-(2^{a+1}m-1)}{2^{2a}-(2^{a+1}m-1)}=m^2+\dfrac{(2^{a+1}m-1)(m^2-1)}{2^{2a}-(2^{a+1}m-1)},\label{finaleq}
\end{equation}
and so $\alpha-1$ is an algebraic $d$-number only if $(2^{a+1}m-1)(m^2-1)/(2^{2a}-(2^{a+1}m-1))\in\mathbb{Z}$.  We have $\gcd(2^{2a}-(2^{a+1}m-1),(2^{a+1}m-1))=\gcd(2^{2a},2^{a+1}m-1)=1$ since $2^{a+1}m-1$ is odd.  So if the right-hand side of Equation (\ref{finaleq}) is an integer, we must have
\begin{align}
2^{2a}-(2^{a+1}m-1)\leq m^2-1\leq 2^{2(a-1)}-1
\end{align}
where the second inequality follows from $m\leq2^{a-1}$.  Thus $2^{a+1}m\geq2^{2a}-2^{2(a-1)}+2$, and therefore
\begin{align}
m\geq\dfrac{2^{2a}-2^{2(a-1)}+2}{2^{a+1}}=3\cdot2^{a-3}+2^{-a}>3\cdot2^{a-3}.
\end{align}
As a result, $\mathcal{T}(\epsilon_2^b)=2m>3\cdot2^{a-2}$ and question remains if this is possible with the upper bound on $b$ from the first argument of the proof.  It is clear $\mathcal{T}(\epsilon_2^b)<\epsilon_2^b=2^{\log_2(\epsilon_2)b}$ since $1-\sqrt{2}<0$, hence in sum, we require $3\cdot2^{a-2}<\mathcal{T}(\epsilon_2^b)<2^{\log_2(\epsilon_2)b}\leq2^{a-1}$.  But this implies $3/2<1$ so no such $a\in\mathbb{Z}_{\geq1}$ exists.
\end{proof}

\begin{lemma}\label{bells}
Let $\alpha:=2^a\epsilon_2^b\sqrt{2}$ for some $a,b\in\mathbb{Z}_{\geq0}$ where $\epsilon_2:=1+\sqrt{2}$.  If $\alpha$ is totally greater than $2$ and $\alpha-1$ is an algebraic $d$-number, then $b=0$, i.e.\ $\alpha\in\mathbb{Z}$.
\end{lemma}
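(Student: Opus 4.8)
The plan is to mirror the proof of Lemma \ref{finlem}, tracking how the extra factor $\sqrt 2$ changes the norm and trace. Write $\epsilon_2^b=u+v\sqrt 2$ with $u,v\in\mathbb Z_{\geq0}$, and let $\sigma$ denote the nontrivial automorphism of $\mathbb Q(\sqrt 2)$, so that $\sigma(\alpha)=2^a(1-\sqrt 2)^b(-\sqrt 2)$. First I would record the parity and size constraints forced by total positivity. Since $1-\sqrt 2<0$, the conjugate $\sigma(\alpha)$ is positive only when $b$ is odd; in particular $b\neq 0$, so $\alpha\in\mathbb Q(\sqrt 2)\setminus\mathbb Q$ and the literal conclusion "$b=0$, i.e.\ $\alpha\in\mathbb Z$'' must be read as the assertion that the hypotheses are never simultaneously satisfiable (the $\sqrt 2$-case is vacuous, which is exactly what the application in Section \ref{8sec} requires). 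Using $\bar\epsilon_2^{\,b}=-\epsilon_2^{-b}$ for $b$ odd, the requirement $\sigma(\alpha)>2$ becomes $\epsilon_2^b<2^{a-1/2}$, which forces $a\geq 2$ and, via the identity $2v\sqrt 2=\epsilon_2^b+\epsilon_2^{-b}$, the upper bound $v\leq 2^{a-2}$.

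Next I would compute the relevant invariants. A direct calculation gives $\mathcal N(\alpha)=2^{2a+1}$ and $\mathcal T(\alpha)=2^{a+2}v$; the odd power of $2$ in the norm (reflecting the ramification $2=(\sqrt 2)^2$) is the one genuinely new feature compared with Lemma \ref{finlem}. Consequently $\mathcal T(\alpha-1)=\mathcal T(\alpha)-2=2(2^{a+1}v-1)$ is even while $\mathcal N(\alpha-1)=\mathcal N(\alpha)-\mathcal T(\alpha)+1=2^{2a+1}-2^{a+2}v+1$ is odd. Applying the criterion that $\alpha-1$ is an algebraic $d$-number if and only if $\mathcal N(\alpha-1)\mid\mathcal T(\alpha-1)^2$ \cite[Lemma 2.7(v)]{codegrees}, and cancelling the factor $4$ against the odd denominator, reduces the problem to showing that $\mathcal N(\alpha-1)\mid(2^{a+1}v-1)^2$ is impossible.

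The key manipulation is the polynomial identity
\[
\frac{(2^{a+1}v-1)^2}{2^{2a+1}-2^{a+2}v+1}=2v^2+\frac{(2v^2-1)(2^{a+2}v-1)}{2^{2a+1}-2^{a+2}v+1},
\]
the exact analogue of the decomposition used in Lemma \ref{finlem}. Since $2^{a+2}v-1$ is odd, $\gcd\!\left(2^{2a+1}-2^{a+2}v+1,\,2^{a+2}v-1\right)=\gcd(2^{2a+1},2^{a+2}v-1)=1$, so integrality forces $\mathcal N(\alpha-1)$ to divide $2v^2-1$. As $\alpha-1$ is totally positive we have $\mathcal N(\alpha-1)>0$, whence $2^{2a+1}-2^{a+2}v+1\leq 2v^2-1$. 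Feeding in the bound $v\leq 2^{a-2}$ gives $2v^2-1\leq 2^{2a-3}-1$, and solving the resulting inequality yields the lower bound $v>15\cdot 2^{a-5}$.

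The contradiction is then immediate: $15\cdot 2^{a-5}>8\cdot 2^{a-5}=2^{a-2}$, incompatible with $v\leq 2^{a-2}$. Hence no $\alpha$ of the form $2^a\epsilon_2^b\sqrt 2$ is totally greater than $2$ with $\alpha-1$ an algebraic $d$-number, completing the argument. I expect the main obstacle to be purely in the bookkeeping: exactly as in Lemma \ref{finlem}, the forced lower bound and the analytic upper bound on $v$ are close (here $\tfrac{15}{8}$ against $1$), so one must check that the ramified norm $2^{2a+1}$, and the coefficient $2v^2$ it produces in place of $v^2$, still push the lower bound strictly past $2^{a-2}$ for every admissible $a$.
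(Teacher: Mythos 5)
Your proof is correct and takes essentially the same route as the paper's: compute $\mathcal{T}(\alpha-1)$ and $\mathcal{N}(\alpha-1)$, invoke the criterion $\mathcal{N}(\alpha-1)\mid\mathcal{T}(\alpha-1)^2$, perform the same polynomial division to reduce to $\mathcal{N}(\alpha-1)\mid 2v^2-1$, and play a lower bound on $v$ against an upper bound. The only (welcome) refinements are that you extract the sharper bound $v\le 2^{a-2}$ directly from $\sigma(\alpha)>2$ at the outset, which yields a cleaner final contradiction ($15\cdot2^{a-5}>2^{a-2}$) than the paper's two-stage comparison of $\mathcal{T}(\epsilon_2^b\sqrt{2})$ against $2^{\log_2(\epsilon_2)b+1/2}$, and that your displayed identity silently corrects the right-hand side of the paper's Equation (\ref{finaleqb}), which as printed is missing the factors $8$ and $4$ on its two terms.
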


\begin{proof}
The proof follows that of Lemma \ref{finlem}.  From $\alpha$ being totally greater than or equal to $2$, it follows that $b$ is odd, and $2^a\geq2^{1/2}\epsilon_2^b=2^{\log_2(\epsilon_2)b+1/2}$.  Therefore, $b\leq\log_2(\epsilon_2)^{-1}(a-1/2)$ for all $a\in\mathbb{Z}_{\geq1}$.  The remainder of the proof is providing a lower bound on $b\in\mathbb{Z}_{\geq1}$.

\par Denote the trace and norm with respect to the field extension $\mathbb{Q}(\sqrt{2})/\mathbb{Q}$ by $\mathcal{T}$ and $\mathcal{N}$.  In this case, $\alpha$ has exactly one other Galois conjugate and we compute $\mathcal{T}(\alpha-1)=\mathcal{T}(\alpha)-2$ and $\mathcal{N}(\alpha-1)=\mathcal{N}(\alpha)-\mathcal{T}(\alpha)+1$.  From assumption, $\mathcal{N}(\alpha-1)$ is positive, hence $\mathcal{N}(\alpha)+1>\mathcal{T}(\alpha)$.  We have $\mathcal{N}(\alpha)=2^{2a+1}$ and $\mathcal{T}(\alpha)=2^a\mathcal{T}(\epsilon_2^b\sqrt{2})=2^{a+2}m$ for some integer $m$ for brevity, since $\mathcal{T}(\epsilon_2^b\sqrt{2})=4\cdot U_b$ where $U_0:=0$, $U_1:=1$, and $U_n:=2\cdot U_{n-1}+U_{n-2}$.  , hence $m\leq2^{a-1}$. As before,
\begin{equation}
\mathcal{N}(\alpha-1)=\mathcal{N}(\alpha)-\mathcal{T}(\alpha)+1=2^{2a+1}-(2^{a+2}m-1).
\end{equation}
We have $\mathcal{T}(\alpha-1)=\mathcal{T}(\alpha)-2=2(2^{a+1}m-1)$.  Therefore for $\alpha-1$ to be an algebraic $d$-number, we require
\begin{align}
\dfrac{(\mathcal{T}(\alpha-1))^2}{\mathcal{N}(\alpha-1)}=\dfrac{\mathcal{T}(\alpha)^2-4\mathcal{T}(\alpha)+4}{\mathcal{N}(\alpha)-\mathcal{T}(\alpha)+1}=4\dfrac{2^{2(a+1)}m^2-(2^{a+2}m-1)}{2^{2a+1}-(2^{a+2}m-1)}\in\mathbb{Z}.
\end{align}
The simplification is only slightly different in this case:
\begin{equation}
4\dfrac{2^{2(a+1)}m^2-(2^{a+2}m-1)}{2^{2a+1}-(2^{a+2}m-1)}=m^2+\dfrac{(2^{a+2}m-1)(2m^2-1)}{2^{2a+1}-(2^{a+2}m-1)},\label{finaleqb}
\end{equation}
and so, as above, we must have
\begin{align}
2^{2a+1}-(2^{a+2}m-1)\leq 2m^2-1\leq 2\cdot2^{2(a-1)}-1
\end{align}
where the second inequality follows from $m\leq2^{a-1}$.  Thus $2^{a+2}m\geq2^{2a+1}-2^{2a-1}+2$, and therefore
\begin{align}
m\geq\dfrac{2^{2a+1}-2^{2a-1}+2}{2^{a+2}}=3\cdot2^{a-3}+2^{-a-1}.
\end{align}
As a result, $\mathcal{T}(\epsilon_2^b\sqrt{2})=4m>3\cdot2^{a-1}+2^{-(a-1)}$ and question remains if this is possible with the upper bound on $b$ from the first argument of the proof.  It is clear $\mathcal{T}(\epsilon_2^b\sqrt{2})<\epsilon_2^b\sqrt{2}+2^{-(a-1)}=2^{\log_2(\epsilon_2)b+1/2}+2^{-(a-1)}$ since $(1-\sqrt{2})^a<2^{-(a-1)}$, hence in sum, we require \begin{equation}
3\cdot2^{a-1}+2^{-(a-1)}<\mathcal{T}(\epsilon_2^b)<2^{\log_2(\epsilon_2)b+1/2}+2^{-(a-1)}\leq2^a+2^{-(a-1)}.
\end{equation}
But this implies $3/2<1$ so no such $a\in\mathbb{Z}_{\geq1}$ exists.
\end{proof}

\begin{proposition}\label{dnumberz}
Let $\alpha$ be an algebraic $d$-number in $\mathbb{Q}(\sqrt{2})$ whose norm is a power of $2$.  If $\alpha$ is totally greater than or equal to $2$ and $\alpha-1$ is an algebraic $d$-number, then $\alpha\in\mathbb{Z}$.
\end{proposition}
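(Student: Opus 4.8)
The plan is to funnel the general statement into the two technical Lemmas \ref{finlem} and \ref{bells}, which already carry out the delicate divisibility estimates for the two possible multiplicative shapes of $\alpha$. First I would dispose of the degenerate case: if $\alpha\in\mathbb{Q}$ then, being an algebraic integer, $\alpha\in\mathbb{Z}$ and there is nothing to prove. So I may assume $\mathbb{Q}(\alpha)=\mathbb{Q}(\sqrt{2})$.

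The key structural step is to pin down the shape of $\alpha$ from the norm hypothesis alone. The ring of integers $\mathbb{Z}[\sqrt{2}]$ is a principal ideal domain in which $2$ ramifies as $(2)=(\sqrt{2})^2$, with $(\sqrt{2})$ the unique prime above $2$. Since $\mathcal{N}(\alpha)$ is a power of $2$, every prime ideal dividing $(\alpha)$ has norm a power of $2$, hence lies over $2$ and equals $(\sqrt{2})$; thus $(\alpha)=(\sqrt{2})^k$ for some $k\in\mathbb{Z}_{\geq0}$. By Dirichlet's unit theorem the units of $\mathbb{Z}[\sqrt{2}]$ are exactly $\pm\epsilon_2^{\,j}$, so $\alpha=\pm\epsilon_2^{\,j}(\sqrt{2})^k$ for some $j\in\mathbb{Z}$. (One checks this already forces $\alpha^2=2^k\cdot(\text{unit})$, so the algebraic $d$-number hypothesis on $\alpha$ itself is used only to guarantee that $\alpha$ is an algebraic integer.)

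Before applying the lemmas I would normalize the exponent $j$ and the sign. All three conditions in play, namely that $\alpha$ is totally $\geq2$, that $\alpha-1$ is an algebraic $d$-number, and the desired conclusion $\alpha\in\mathbb{Z}$, are invariant under the nontrivial Galois automorphism $\sigma\colon\sqrt{2}\mapsto-\sqrt{2}$, since the multiset of conjugates is unchanged, the $d$-number property is Galois-stable, and $\mathbb{Z}$ is $\sigma$-fixed. Because $\sigma(\epsilon_2)=-\epsilon_2^{-1}$, replacing $\alpha$ by $\sigma(\alpha)$ if necessary lets me assume $j\geq0$, and total positivity then fixes the overall sign to be $+$. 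Writing $a:=\lfloor k/2\rfloor$ and $b:=j$ places $\alpha$ into exactly one of the two forms $\alpha=2^{a}\epsilon_2^{\,b}$ (when $k$ is even) or $\alpha=2^{a}\epsilon_2^{\,b}\sqrt{2}$ (when $k$ is odd), with $a,b\in\mathbb{Z}_{\geq0}$.

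Finally I would invoke the two lemmas. In the even case Lemma \ref{finlem} gives $b=0$, whence $\alpha=2^{a}\in\mathbb{Z}$, as claimed. In the odd case, total positivity of $\alpha=2^{a}\epsilon_2^{\,b}\sqrt{2}$ forces $b$ to be \emph{odd}: its $\sigma$-conjugate is $-2^{a}(1-\sqrt{2})^{b}\sqrt{2}$, which is positive precisely when $b$ is odd. Yet Lemma \ref{bells} forces $b=0$, and these two requirements are incompatible, so the odd case cannot occur. This exhausts the possibilities and yields $\alpha\in\mathbb{Z}$. The genuine difficulty is entirely absorbed into Lemmas \ref{finlem} and \ref{bells}, where the integrality of $\mathcal{T}(\alpha-1)^2/\mathcal{N}(\alpha-1)$ pits a forced lower bound on $\mathcal{T}(\epsilon_2^{\,b})$ against the upper bound on $b$ coming from total positivity; the role of the present proposition is only the clean ideal-theoretic reduction that routes each admissible $\alpha$ into one of those two normalized forms.
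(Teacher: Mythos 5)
Your proof is correct and follows essentially the same route as the paper: both reduce $\alpha$ to the form $2^a\epsilon_2^b$ or $2^a\epsilon_2^b\sqrt{2}$ (the paper via the characterization $\alpha^2=2^a\epsilon_2^m$ of algebraic $d$-numbers of $2$-power norm, you via ideal factorization in the PID $\mathbb{Z}[\sqrt{2}]$, which amounts to the same thing) and then hand off to Lemmas \ref{finlem} and \ref{bells} according to the parity of the exponent of $\sqrt{2}$. Your version is in fact more carefully executed, since you make explicit the Galois normalization needed to arrange $b\geq0$ and the sign, which the paper's two-sentence proof leaves implicit.
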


\begin{proof}
Any algebraic $d$-number $\alpha$ in $\mathbb{Q}(\sqrt{2})$ whose norm is a power of $2$ has $\alpha^2=2^a\epsilon_2^m$ for some $a\in\mathbb{Z}_{\geq0}$ and $m\in\mathbb{Z}$.  If the norm of $\alpha$ is a perfect square, then $\alpha$ has the form as in Lemma \ref{finlem}, otherwise it has the form as in \ref{bells}.
\end{proof}

\bibliographystyle{plain}
\bibliography{bib}

\end{document}